\theoremstyle{plain}
\newtheorem{theorem}{Theorem}[section]
\newtheorem{lemma}[theorem]{Lemma}
\newtheorem{corollary}[theorem]{Corollary}
\theoremstyle{definition}
\newtheorem{definition}[theorem]{Definition}
\theoremstyle{remark}
\newtheorem{remark}[theorem]{Remark}
\numberwithin{equation}{section}
\title
[Smooth invariant foliations for semiflows] 
{Smooth invariant foliations and Koopman eigenfunctions about stable equilibria of semiflows}
\author{Gergely Buza}
\address{Department of Applied Mathematics and Theoretical Physics, University of
Cambridge, Cambridge CB3 0WB, UK}
\email{gb643@cam.ac.uk}
\begin{document}

\maketitle

\begin{abstract}
    We consider a $C^r$ semiflow $\{ \varphi_t \}_{t \geq 0}$ on a Banach space $X$ admitting a stable fixed point $x$.
    We show, along the lines of the parameterization method \cite{Cabre2003a}, the existence of a $C^r$ invariant foliation tangent to $X_1$ at $x$, for an arbitrary $D \varphi_t(x)$-invariant subspace $X_1 \subset X$ satisfying some additional spectral conditions.
    Uniqueness ensues in a subclass of sufficiently smooth invariant foliations tangent to $X_1$ at $x$.
    We then draw relations to Koopman theory, and thereby establish the existence and uniqueness, in some appropriate sense, of $C^r$ Koopman eigenfunctions.
    We demonstrate that these results apply to the case of the Navier-Stokes system, the archetypal example considered by the modern upheaval of applied 'Koopmanism'.
\end{abstract}

\section{Introduction}

Given a dynamical system on a manifold, an invariant foliation is, roughly speaking, a collection of leaves tessellating the manifold (or a distinguished region of it) such that leaves are mapped into one another under the action of the dynamical system.
Structures as such crop up naturally 
and have been the subject of study 
for a long time
throughout the development of dynamical systems theory. 
Perhaps the most prominent 
examples are the stable/untable foliations of a normally hyperbolic invariant manifold, leaves of which are characterized by sharp forward/backward asymptoticity and their tangency to the stable/unstable subbundles \cite{HirschPughShub70,fenichel1971persistence} (see also \cite{anosov1967geodesic,roberts1989appropriate,lu1991hartman,chen1997invariant,eldering2018global,szalai2020invariant}, to name a few alternative examples).

The present paper is concerned with foliations that are instead tangent at a stable 
fixed point to arbitrary subspaces invariant under the linearized dynamics.
Considerations as such 
have been explored recently in finite dimensions, so as to provide practical reduced-order models to the field of engineering mechanics \cite{szalai2020invariant,szalai2023data}. 
Here, we generalize these results to semiflows on Banach spaces. 
The main objective in mind is to conform to the setting of semilinear parabolic evolution equations \cite{henry1981geometric},
and hence to that of the Navier-Stokes system, but the scope of applications is by no means limited to this.

This line of work was motivated  partially by the 
desideratum
to tackle 
the existence/uniqueness problem of Koopman eigenfunctions \cite{koopman1931hamiltonian,mezic2005spectral,mezic2020spectrum}, 
which 
are closely related to foliations\footnote{The associated notion in Koopman theory is that of \textit{isostables} \cite{mauroy2013isostables}.}. 
The Koopman operator, given by the pullback of observable functions along the dynamics, 
is an almost century-old concept \cite{koopman1931hamiltonian} that has seen an increase in popularity upon being reignited semi-recently \cite{mezic2004comparison,mezic2005spectral}.
Its use through dynamic mode decomposition (DMD)--a tool that aims to approximate Koopman eigenfunctions--has since become an integral part of the fluid dynamics literature, and is routinely employed in model reduction, post-processing and related areas \cite{schmid2010dynamic,schmid2022dynamic,schmid2011applications}.
However, 
 efforts to ascertain the conditions under which these structures exist have lagged behind. 
While Sternberg- and Poincaré-type linearization theorems \cite{sternberg1957local,poincare1879proprietes} had been  
long known to produce Koopman eigenfunctions \cite{mauroy2013isostables,mezic2020spectrum}, the first attempt to identify the necessary set of assumptions and answer the uniqueness question properly was the 
work of \cite{kvalheim2021existence}. 
This latter proceeding addresses the finite-dimensional case wholly, but, 
being that the main realm of applications is the infinite-dimensional setting of Navier-Stokes, we figured the results would be worth extending to this case.

The issue becomes particularly intricate in infinite dimensions, since one may no longer 
fall back on full-fledged linearization theorems (of Sternberg/Poincaré) in the absence of 
meticulous results,
as the set of spectral nonresonance conditions becomes near impossible to verify (or, in most cases, they simply fail).
Via considering foliations, the number of nonresonance assumptions required are relaxed in two ways: we merely need a semiconjugacy in place of a conjugacy, and it is no longer necessary to linearize the reduced dynamics. 
Partial and full linearization results are then obtained as special cases (which already exist for the case of maps even in infinite dimensions, see \cite{elbialy2001local}).
With the number of spectral conditions reduced to a feasible amount, 
rigorous algorithms designed to approximate the spectrum 
may serve to verify them \cite{boffi2000problem,colbrook2022foundations},
thereby confirming the existence of foliations and Koopman eigenfunctions on an example-by-example basis. 
This, in turn, could provide insurance to the practitioner that results obtained by numerical procedures such as DMD do pertain to the full underlying system.

The core of the proof herein proceeds 
in line with that of the parameterization method \cite{Cabre2003a,cabre2003parameterization2,cabre2005parameterization3,haro2016parameterization} (and hence also with those of \cite{sternberg1957local,szalai2020invariant,kvalheim2021existence}).
Alternative routes to establishing foliations exist, and have been employed successfully even in the infinite-dimensional, semiflow setting \cite{chow1991smooth,chen1997invariant,bates2000invariant}. 
These procedures
rely crucially on the foliation's  
defining invariant subspace being associated to a half-space of the spectrum--so that Lyapunov-Perron, graph transform or Irwin-type proofs are applicable--and hence are not quite adequate for the purposes herein.
Moreover, the leaves of these foliations depend only continuously on the base point -- yielding projection maps that are merely continuous.
In contrast, the approach herein will yield smoother base-point dependence, which will be crucial in establishing the existence of \textit{principal} Koopman eigenfunctions (which, in some sense, are the only useful ones).
To compare the assumptions with \cite{chow1991smooth,chen1997invariant}, we note the following points.
First, spectral (and Lyapunov) stability is assumed herein.
This means that we will not be able to produce smooth Koopman eigenfunctions about unstable equilibria; and to produce $C^0$ ones, we resort to the pseudostable foliations of \cite{chen1997invariant} (obtained similarly to Irwin's method \cite{Irwin72,irwin1980new}). 
In the context of the defining invariant subspaces permitted by \cite{chow1991smooth,chen1997invariant}, stability is the only additional assumption we make.
To encompass more general subspaces, which do not necessarily satisfy the exponential dichotomy assumptions of \cite{chow1991smooth,chen1997invariant}, we require in addition to stability some nonresonance conditions described in Section~\ref{sect:statement} below.

\subsection{Preliminaries, overview and organization} 
\label{sect:intro_tech}

We begin by introducing some terminology.

Let $M$ be a Banach manifold\footnote{Proceeding forward, all manifolds will be assumed at least Banach.}.
A family of maps $\{\varphi_t\}_{t \geq 0}$, $\varphi_t:M \to M$, is called a \textit{semiflow} on $M$ if
\begin{subequations} \label{eq:sf}
    \begin{align}
    \varphi_0 &= \mathrm{id}_M, \label{eq:sf1}\\ 
    \varphi_t \circ \varphi_s &= \varphi_{t+s}, \qquad \text{for all } t,s \geq 0. \label{eq:sf2}
\end{align}\end{subequations}
If \eqref{eq:sf2} holds for all $t,s \in \mathbb{R}$, $\varphi$ is called a \textit{flow}.
A \textit{local semiflow} is a map $\varphi:\mathcal{D}^\varphi \to M$ satisfying \eqref{eq:sf} wherever defined, where  $\mathcal{D}^\varphi \subset \mathbb{R}^{\geq 0} \times M$ is an open subset such that $(0,p) \in \mathcal{D}^\varphi$ for all $p \in M$.
Let $\mathcal{D}_t^\varphi : = \{p \in M \, | \, (t,p) \in \mathcal{D}^\varphi \}$ and $\mathcal{D}_p^\varphi : = \{t \in \mathbb{R}^{\geq 0} \, | \, (t,p) \in \mathcal{D}^\varphi \}$.

We proceed by introducing foliations formally,
through the simplest possible route, following \cite{lawson1974foliations}.\footnote{We note that alternative definitions exist that permit the individual leaves to be smoother than the foliation itself, usually termed $C^k \times C^\ell$ foliations with $k \geq \ell$ (see, e.g., \cite{burchard1992smooth}). These coincide with Definition~\ref{def:foliation} if $k = \ell$, and we shall make no use of this additional degree of freedom herein.}

\begin{definition}[Foliations] \label{def:foliation} 
Suppose $M$ is a manifold modeled on $X$, for $X$ a Banach space admitting a splitting $X = X_0 \oplus X_1$ into closed linear subspaces $X_0$ and $X_1$. 
By an \textit{$X_1$-modeled, $C^r$ foliation} of $M$, $r \in \mathbb{N}_0 \cup \{ \infty,\omega \}$, 
we mean a decomposition of $M$ into a union of disjoint connected subsets
$\{ \mathscr{L}_i \}_{i \in I}$, called \textit{leaves} of the foliation,
such that there exists a $C^r$ atlas on $M$ composed of charts of the form $h : M \supset O \to X$ that map leaves passing through their domains $\mathscr{L}_i \cap O$ to sets of the form $(\{x_0\} \times X_1) \cap \mathrm{im}(h)$ (with $x_0$ ranging through $X_0$ as $i \in I$ is varied). 
\end{definition}

Note that, by this definition, the leaves $\mathscr{L}_i$ are submanifolds of $M$ modeled on $X_1$. 

It is often preferable to parameterize the leaves $\mathscr{L}_i$ 
by the points they contain instead.
In this context, two leaves $\mathscr{L}_p$ and $\mathscr{L}_q$, for $p,q \in M$, are either disjoint or identical.

\begin{definition}[(Locally) invariant foliations] \label{def:foliation_invariance}
Given a semiflow $\{ \varphi_t \}_{t \geq 0}$ on $M$, a foliation $\{ \mathscr{L}_p \}_{p \in M}$
is said to be \textit{(positively)\footnote{In this work, we only deal with positively invariant foliations and hence this prefix will generally be omitted.} invariant} if $\varphi_t (\mathscr{L}_p) \subset \mathscr{L}_{\varphi_t (p)}$, for each $p \in M$ and $t \geq  0$.

We say it is a \textit{(positively) invariant foliation about $p \in M$} if the above invariance property holds for an open neighborhood $O \subset M$ of $p$, i.e., $\varphi_t (O \cap \mathscr{L}_q ) \subset \mathscr{L}_{\varphi_t(q)}$ for all $q \in O$ and $t \geq 0$.

We say it is \textit{locally (positively) invariant about $p \in M$} if there exists an open neighborhood of $p$, $O \subset M$, such that $\varphi_t ( \bigcap_{s \in [0,t]} \varphi_s^{-1}(O) \cap \mathscr{L}_q)  \subset \mathscr{L}_{\varphi_t (q)} $ for those $q \in O$ and $t \geq 0$ with $\varphi_s(q) \in O$ for all $s \in [0,t]$. 
\end{definition}

Locally, any $C^r$ foliation with $r \geq 1$ may be viewed as a submersion. 
\begin{definition}[Submersions] \label{def:submersion}
    Let $M$ and $N$ be two $C^r$ manifolds, $r \geq 1$. 
    A $C^r$ map $\pi:M \to N$ is said to be a submersion at $p \in M$ if $D \pi(p) : T_p M \to T_{\pi(p)}N$ is surjective and its kernel splits (i.e., $\ker ( D \pi(p) )$ is complemented in $T_p M$).
    It is called a submersion if it is a submersion at all $p \in M$.
\end{definition}

The local equivalence of submersions and foliations follows from an immediate corollary of the inverse function theorem (Corollary~5.8, \cite{lang2012fundamentals}).
Suppose $\pi:M \to N$ is a $C^r$ submersion at $p \in M$, as in Definition~\ref{def:submersion}, with $M$ being modeled on $X$, a Banach space.
Take charts $\sigma$ and $\tau$ on neighborhoods of $p$ and $\pi(p)$, respectively, such that $\sigma(p) = 0$.
Then, $X_1 : = \ker ( D(\tau \circ \pi \circ \sigma^{-1})(0) )$ is complemented in $X$, denote any choice as such by $X_0$.
According to the splitting $X = X_0 \oplus X_1$, the map
\begin{equation}
    (x_0,x_1) \mapsto \left( (D(\tau \circ \pi \circ \sigma^{-1}) \vert_{X_0} (0))^{-1} \tau \circ \pi \circ \sigma^{-1}(x_0,x_1),x_1 \right)
    \label{eq:subm_2_foli}
\end{equation}
is a local $C^r$ diffeomorphism about $0$ by the inverse function theorem.
Precomposing \eqref{eq:subm_2_foli} with $\sigma$ yields a chart about $p$ of the form required by Definition~\ref{def:foliation}. 
The converse is obvious, as the composition $ M \supset O \xrightarrow{h} X_0 \times X_1 \xrightarrow{\mathrm{proj}} X_0$ is a submersion on $O$ for a chart $h$ defined on $O$ as in Definition~\ref{def:foliation}.

The particular choice of the target space $X_0$ for the submersion
holds no significance and is defined only up to a diffeomorphism; indeed, any (sufficiently smooth) manifold modeled on $X_0$ and thus any choice of subspace complementing $X_1$ would suffice,
for these choices do not alter the resulting foliation.

In the realm of submersions $\pi:M \to N$, the invariance property may be written $\varphi_t (\pi^{-1} (\pi(p))) \subset \pi^{-1}(\pi \circ \varphi_t (p))$, for $p \in M$ and $t \geq 0$,
which is equivalent to the existence of a semiflow $\{ \vartheta_t \}_{t \geq 0}$ on $N$ such that $\pi \circ \varphi_t = \vartheta_t \circ \pi $ for all $t \geq 0$.

As previously outlined in the introduction, the present work is concerned with foliations locally defined about (stable) equilibria, and hence our exposition will be focused on the case $M = U$, for $U$ some open subset of a Banach space containing $0$, which we assume without loss of generality to be the fixed point. 
Note, however, that since the conclusions reached herein are purely local, they continue to hold on sufficiently smooth manifolds (locally, about a fixed point) by passing through a chart.

The objective of this paper 
is thus reduced to the following.
We seek submersions $\pi : X \supset U \to X_0$ at $0 \in X$ of higher regularity, on a Banach space $X$, where $X_0$ is any complement to $X_1 := \ker ( D\pi(0) )$, satisfying the invariance relation
\begin{equation}
     \pi \circ \varphi_t = \vartheta_t \circ \pi, \qquad t \geq 0,
    \label{eq:intro_invariance}
\end{equation}
for some (local) semiflow $\{ \vartheta_t \}_{t \geq 0}$ on $X_0$.

Distinguishing features of (smooth) invariant foliations are their tangency to some closed linear subspace $X_1$, by which we mean the tangency of $\mathscr{L}_0$ to $X_1$ at $0$.
What we end up showing is that, for any closed linear subspace $X_1$ invariant under $D \varphi_t (0)$ satisfying some additional spectral conditions (which depend crucially on the choice of $X_1$), there exists an invariant foliation about $0$ tangent to $X_1$.
Moreover, said foliation is unique among sufficiently smooth invariant foliations tangent to $X_1$.
We remark that the $D \varphi_t (0)$-invariance of $X_1$ is forced by \eqref{eq:intro_invariance} (see also Remark~\ref{remark:linearpart}),
but $X_1$ need not necessarily be a spectral subspace, nor does it have to have an invariant complement.

In many scenarios, particularly in engineering and related areas, it is desirable to simplify $\vartheta$ (in the sense of normal form theory \cite{murdock2003normal}).
The optimal end result is if $\vartheta_t$ can be chosen a linear map for all $t \geq 0$; in the language of Definition~\ref{def:foliation}, this amounts to the existence of a (local) $C^r$ change of coordinates according to which the dynamics take the form $(T_t^0,\widetilde{\varphi}_t^1)$, for a linear semigroup $T_t^0$ acting on $X_0$ and $\widetilde{\varphi}_t^1$, a semiflow on $X_1$.
These are usually referred to as \textit{linearizing semiconjugacies} and are the subject of discussion in a number of recent texts \cite{elbialy2001local,eldering2018global,kvalheim2021existence}. 
We obtain them as a byproduct of the foliation results, under a set of additional 'internal' nonresonance conditions (to be made explicit later) that enable $C^r$ linearization of $\vartheta_t$.

In the following section, we make the above outlined statements precise,
and give the main foliation-related results. 
Their consequences pertaining to Koopman theory are drawn in Section~\ref{sect:koopman}.
In Section~\ref{sect:stablecase}, we reproduce results of \cite{kvalheim2021existence} concerning the existence and uniqueness of Koopman eigenfunctions about stable equilibria,
tailored to the setting of parabolic equations on bounded domains.
Section~\ref{sect:unstablecase} is a slight detour, during the course of which we apply the invariant foliation theorem of \cite{chen1997invariant} to obtain $C^0$ Koopman eigenfunctions about (not necessarily stable) hyperbolic fixed points.
Thereafter, in Section~\ref{sect:example}, we consider the example of fluid flow on a bounded domain, and confirm the applicability of all prior results in the Navier-Stokes setting.
Finally, Section~\ref{sect:proof} contains the proofs of all statements proposed in Section~\ref{sect:statement}.

\section{Statement of the results}
\label{sect:statement}

As is standard procedure in dynamical systems, we first present the main theorem for the case of maps. 
Once this is proven, its uniqueness statement will serve to transition the result to the setting of semiflows.

First, let us introduce some further notational conventions. 
For two Banach spaces $X$ and $Y$, $r \in \mathbb{N}_0$, and an open subset $O \subset X$, we consider the Banach space
\begin{multline*}
    C^r_b(O;Y) = \Big\{ f : O \to Y \; \Big| \; f \text{ is $r$-times differentiable}, \\ x \mapsto D^i f (x) \text{ is bounded and continuous} , \, 0 \leq i \leq r \Big\}
\end{multline*}
with norm 
\begin{displaymath}
    \Vert f \Vert_{C^r_b(O;Y)} = \sup \big\{ \Vert D^i f (x) \Vert \; \big| \; x \in O, \, 0 \leq i \leq r \big\},
\end{displaymath}
where $D$ stands for the Fréchet derivative.
The space $C^\infty_b (O;Y)$ consists of functions $f$ such that $f \in C^r_b (O;Y)$ for all $r \in \mathbb{N}$.
We also consider the space $C^\omega_b (O;Y)$ of bounded complex analytic functions equipped with the supremum norm, whenever $X$ is complex.
If $O$ and $Y$ are obvious from the context, we suppress them from the notation.
We will also continue to refer to functions as being $C^r$ to indicate mere $r$-times continuous differentiability.

For a linear operator $A$ on a complex Banach space $X$, we denote by $\sigma(A)$ its spectrum. 
If $X$ is a real Banach space, then $\sigma(A)$ denotes the spectrum of the complexification of $A$.
We use $\mathrm{P}\sigma(A)$, $\mathrm{R}\sigma(A)$ and $\mathrm{C}\sigma(A)$ to denote the point, residual and continuous spectra, respectively.

Given two sets $\Lambda, \Gamma \subset \mathbb{C}$, we use the notation
\begin{displaymath}
    \Lambda \Gamma = \left\{ \lambda \gamma \; \middle\vert \; \lambda \in \Lambda, \, \gamma \in \Gamma \right\}, \qquad \text{and} \qquad \Lambda + \Gamma = \left\{ \lambda + \gamma \; \middle\vert \; \lambda \in \Lambda, \, \gamma \in \Gamma \right\},
\end{displaymath}
and similarly, $\Lambda^n = \Lambda \cdots \Lambda$ and $n \Lambda = \Lambda + \cdots + \Lambda$ (repeated $n$ times) for $n \in \mathbb{N}$.

We denote the open ball of radius $\varepsilon > 0$ about $x \in X$ by $B_\varepsilon^X(x)$.
If $x = 0$ is the origin, the base point is omitted from the notation.

The hypotheses required for the map version of the main result are as follows.

\begin{enumerate}[label =(H.\arabic*)] 

\item \label{hyp1:X} Let $(X,|\cdot|)$ be a real or complex Banach space admitting a decomposition into closed linear subspaces $X = X_0 \oplus X_1$; denote by $\imath_i: X_i \xhookrightarrow{} X$ the inclusions, by $P_i:X \to X$ the projections with $\ker (P_1) = X_0$, $\ker (P_0) = X_1$, and by $\widetilde{P}_i:X \to X_i$ the projections with restricted range for $i=0,1$. 
(Here, $X_0$ and $X_1$ are endowed with the subspace topology.)

\item \label{hyp2:f} Let $f:O \to X$ be $C^r_b$ smooth (with $O \subset X$ an open neighborhood of $0$), $r \in \mathbb{N} \cup \{\omega,\infty \}$\footnote{When $X$ is a complex Banach space, we assume $r = \omega$, i.e., the complex analytic case (here and everywhere else).}, 
with $f(0) = 0$ and such that $A: = Df(0)$ takes the following form with respect to the splitting $X = X_0 \oplus X_1$ of \ref{hyp1:X}:
\begin{equation}
    A = \begin{pmatrix}
        A_0 & 0 \\
        B & A_1
    \end{pmatrix}.
    \label{eq:A}
\end{equation}
Assume moreover that $0 \notin \sigma(A_0)$, $\sigma(A) \subset B_1^{\mathbb{C}}$; and denote by $\phi = f - A$.

\item \label{hyp3:ell} Let $1 \leq \ell <r$\footnote{For the possible values of $r$, the usual ordering of $\mathbb{N}$ is extended such that $\mathbb{N} < \infty < \omega$ (here and everywhere else).} be an integer such that
\begin{equation}
    \sigma( A_0^{-1}) \sigma(A)^{\ell + 1} \subset B_1^{\mathbb{C}}, 
    \label{eq:ell}
\end{equation}
with $A$ as in \eqref{eq:A}.

\item \label{hyp4:spec1} The spectrum of $A$ from \eqref{eq:A} satisfies 
\begin{displaymath}
    \sigma(A_0)^{n-j} \sigma(A_1)^j \cap \sigma(A_0) = \emptyset
\end{displaymath}
for all pairs $(j,n)$ of integers such that $2 \leq n \leq \ell$ and $1 \leq j \leq n$, with $\ell$ as in \ref{hyp3:ell}.

\item \label{hyp5:spec2} The spectrum of $A$ from \eqref{eq:A} satisfies 
\begin{displaymath}
    \sigma(A_0)^{n}  \cap \sigma(A_0) = \emptyset
\end{displaymath}
for all $m \leq n \leq \ell$ for some integer $m \geq 2$ and $\ell$ as in \ref{hyp3:ell}.

\end{enumerate}

The final assumption, \ref{hyp5:spec2}, is an optional one required only if the reduced dynamics is to be simplified in polynomial order (see below), and is precisely what differentiates foliations from linearizing semiconjugacies.

\begin{theorem} \label{thm:maps}
    Under hypotheses \ref{hyp1:X}-\ref{hyp4:spec1}, the following assertions hold.
    \begin{enumerate}[label =({\roman*})]
    \item \label{thm:maps_1} There exists an open neighborhood of $0$, $U \subset O$, a $C^r_b$ map $\pi : U \to X_0$ with 
    \begin{subequations}\label{eq:map_foliation_diag}
    \begin{equation}
        \pi(0) = 0, \qquad D\pi(0) \vert_{X_0} \in \mathrm{Aut}(X_0),
        \label{eq:Dpi_Aut}
    \end{equation} 
    and a map $g:X_0 \supset O_0 \to X_0$ 
    satisfying
    \begin{equation} \label{eq:invariance_maps}
    \begin{tikzcd} 
    U \arrow[r, "f"] \arrow[d, "\pi"']
    & U \arrow[d, "\pi"] \\
    O_0 \arrow[r, "g"']
    &  O_0
    \end{tikzcd}
    \end{equation}\end{subequations}
    Moreover, $g$ can be chosen to be a polynomial of degree not larger than $\ell$.
    If \ref{hyp5:spec2} holds with some $m \leq \ell$, $g$ can be chosen degree-$(m-1)$ (linear if $m=2$). 
    \item \label{thm:maps_2}
    The solution constructed in \ref{thm:maps_1} is unique among $C^{\ell+1}_b$ solutions in the following sense. 
    Given two pairs of $C^k_b$, $\ell + 1 \leq k \leq r$\footnote{If $r = \omega$, we only consider $k = \omega$, and the norms $\Vert \cdot \Vert_{C^{\ell+1}_b}$ towards the end of the statement should be replaced by $\Vert \cdot \Vert_{C^{\omega}_b}$.}, solutions to \eqref{eq:map_foliation_diag} (over any two neighborhoods in place of $U$), $(g,\pi)$ and $(\tilde{g},\tilde{\pi})$ (with all four functions in $C^{k}_b$), there exists a 
    unique 
    local $C^{k}_b$ diffeomorphism $\theta$ on $X_0$ such that 
    \begin{subequations} \label{eq:uniqueness}
    \begin{align}
         &\tilde{\pi}  = \theta \circ \pi,  & &\text{on } W, \label{eq:pi_uniqueness} \\
         &\tilde{g} = \theta \circ g \circ \theta^{-1},  & &\text{on } \tilde{\pi}(W). \label{eq:g_uniqueness}
    \end{align}\end{subequations}
    The neighborhood $W$ on which \eqref{eq:pi_uniqueness} holds is determined by
    $\Vert f-A \Vert_{C^{\ell+1}_b}$, $\Vert (D \pi(0)\vert_{X_0})^{-1} g \circ D \pi(0)\vert_{X_0} -A_0 \Vert_{C^{\ell+1}_b}$, $\Vert (D \tilde{\pi}(0)\vert_{X_0})^{-1} \tilde{g} \circ D \tilde{\pi}(0)\vert_{X_0} -A_0 \Vert_{C^{\ell+1}_b}$, and
    $\Vert (D \tilde{\pi}(0)\vert_{X_0})^{-1} \tilde{\pi} \vert_{U_0} \circ ( \pi \vert_{U_0} )^{-1 } \circ D \pi(0)\vert_{X_0}- \mathrm{id}_{X_0} \Vert_{C^{\ell+1}_b}$, for an open set $U_0 \subset X_0$ on which both  $\pi$ and $\tilde{\pi}$ are invertible. 
    
    If $g$ and $\tilde{g}$ are both linear, then $\theta = D\tilde{\pi}(0) \vert_{X_0} (D \pi(0)\vert_{X_0})^{-1} \in \mathrm{Aut}(X_0)$. 
    \end{enumerate}
\end{theorem}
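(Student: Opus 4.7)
The plan is to follow the parameterization method of Cabr\'e-Fontich-de la Llave, splitting the construction of the pair $(\pi,g)$ into a polynomial backbone and a smooth tail. For the polynomial stage I would seek
\begin{equation*}
\pi = \widetilde{P}_0 + \sum_{n=2}^{\ell}\pi_n, \qquad g = A_0 + \sum_{n=2}^{\ell}g_n,
\end{equation*}
with $\pi_n,g_n$ homogeneous of degree $n$ (after first normalizing so that $D\pi(0)|_{X_0} = \mathrm{id}_{X_0}$ and $\ker D\pi(0) = X_1$). Substituting into $\pi \circ f = g \circ \pi$ and matching degree-$n$ terms yields, for each $n$, a cohomological equation
\begin{equation*}
A_0 \pi_n - \pi_n \circ A = g_n \circ \widetilde{P}_0 + R_n,
\end{equation*}
whose inhomogeneity $R_n$ is built from $\pi_2,\dots,\pi_{n-1}$, $g_2,\dots,g_{n-1}$, and the Taylor expansion of $\phi$. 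Decomposing $\pi_n$ according to the $X = X_0 \oplus X_1$ bidegree $(n-j,j)$, invertibility of the homological operator on each piece is governed by $\sigma(A_0) \cap \sigma(A_0)^{n-j} \sigma(A_1)^{j} = \emptyset$ (the standard spectrum of composition operators on symmetric multilinear maps, simplified by the block triangular form of $A$). Hypothesis \ref{hyp4:spec1} supplies this for every $j \geq 1$, uniquely determining those components of $\pi_n$; the $j=0$ piece is absorbed into $g_n$. Under \ref{hyp5:spec2} the $j=0$ case is also invertible for $n \geq m$, so that $g_n$ may be set to zero in that range, truncating $g$ at degree $m-1$. This produces a polynomial pair $(\pi^\ell, g^\ell)$ with $\pi^\ell \circ f - g^\ell \circ \pi^\ell = O(|x|^{\ell+1})$.

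For the tail stage I would write $\pi = \pi^\ell + \psi$, keep $g = g^\ell$, and rearrange \eqref{eq:invariance_maps} into the fixed-point problem
\begin{equation*}
\psi = A_0^{-1}\, \psi \circ f + \mathcal{N}(\psi),
\end{equation*}
where $\mathcal{N}$ collects the $O(|x|^{\ell+1})$ residual together with the superlinear-in-$\psi$ contribution coming from $g^\ell(\pi^\ell + \psi) - g^\ell \circ \pi^\ell - A_0 \psi$. On the Banach space
\begin{equation*}
\mathcal{B}_\rho := \bigl\{ \psi \in C^{\ell+1}_b(B_\rho^X;X_0)\,:\, D^k\psi(0) = 0,\; 0 \leq k \leq \ell \bigr\},
\end{equation*}
the linear operator $\mathcal{L}\psi := A_0^{-1} \psi \circ f$ preserves $\mathcal{B}_\rho$; the $(\ell+1)$-fold vanishing at the origin, combined with the Taylor expansion of $f$, should bound its spectrum by $\sigma(A_0^{-1})\sigma(A)^{\ell+1} \subset B_1^{\mathbb{C}}$ via \ref{hyp3:ell}. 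The main obstacle I anticipate is executing this contraction rigorously: one must choose a norm equivalent to $\|\cdot\|_{C^{\ell+1}_b}$ but adapted (via Gelfand's formula applied to $A_0^{-1}$ and $A$) so that $\|\mathcal{L}\|_{\mathcal{B}_\rho} < 1$ for $\rho$ small, and verify that $\mathcal{N}$ is a small Lipschitz perturbation on $\mathcal{B}_\rho$. Banach's fixed-point theorem then yields a unique $\psi \in \mathcal{B}_\rho$, hence a $C^{\ell+1}_b$ solution. To upgrade to $C^r_b$ with $r > \ell+1$, I would run a fiber-contraction argument on each formal derivative $D^k \psi$, $\ell+1 \leq k \leq r$, each obeying an analogous contraction with the same spectral bound (the extra factors of $Df$ produced by the chain rule pair with $A_0^{-1}$ to preserve the margin below $1$).

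For the uniqueness part \ref{thm:maps_2}, given two $C^k_b$ solutions $(\pi,g)$ and $(\tilde\pi,\tilde g)$, the restrictions $\pi|_{X_0}$ and $\tilde\pi|_{X_0}$ are local $C^k_b$ diffeomorphisms onto neighborhoods of $0 \in X_0$ by the inverse function theorem (using \eqref{eq:Dpi_Aut}); I set $\theta := \tilde\pi|_{U_0} \circ (\pi|_{U_0})^{-1}$ on their common image. After conjugating $(\tilde\pi, \tilde g)$ by $\theta^{-1}$ and matching the polynomial parts via the polynomial stage above, both $\pi$ and $\theta^{-1} \circ \tilde\pi$ become solutions of the same tail-stage functional equation on a common neighborhood, with identical restrictions to $X_0$ and matching polynomial backbones; the uniqueness half of the fixed-point argument then forces them to coincide on a neighborhood $W$, and $\tilde g = \theta \circ g \circ \theta^{-1}$ follows by composing with $f$. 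The quantitative dependence of $W$ on the four norms listed in the theorem simply tracks how large a radius $\rho$ still preserves the contraction estimate in the presence of those data perturbations. Finally, when $g$ and $\tilde g$ are both linear (possible precisely when \ref{hyp5:spec2} holds with $m = 2$), the intertwining $\theta \circ g = \tilde g \circ \theta$ together with $\sigma(A_0)^n \cap \sigma(A_0) = \emptyset$ for all $n \geq 2$ force every Taylor coefficient of $\theta$ of order $\geq 2$ to vanish, so $\theta = D\theta(0) = D\tilde\pi(0)|_{X_0}(D\pi(0)|_{X_0})^{-1}$, as claimed.
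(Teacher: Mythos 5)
Your existence argument follows the same route as the paper---parameterization method with a polynomial backbone built from cohomological equations split by bidegree, followed by a tail fixed-point argument with an adapted norm making the linear operator a contraction. The paper works directly in the spaces $\Gamma_{r,\ell}$ (functions whose derivatives vanish to order $\ell$ at the origin, with an extra condition at $s=\ell$) and obtains the $C^r_b$ regularity for $r\in\mathbb{N}\cup\{\omega\}$ in one pass from the Neumann-series estimate on $\mathcal{S}^{-1}$, rather than via a fiber contraction on the higher derivatives; your proposal should also work, but note that the $r=\infty$ case genuinely requires a separate argument (the paper uses the conjugation $\hat\pi_r = g^{-j_r}\circ\pi_r\circ f^{j_r}$ to show the $C^{\ell+1}_b$ solution agrees with each $C^r_b$ solution on a uniform neighborhood), which your fiber-contraction sketch does not address.

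There is a genuine gap in your uniqueness argument: it is circular. You set $\theta := \tilde\pi|_{U_0}\circ(\pi|_{U_0})^{-1}$ and then assert that $\pi$ and $\theta^{-1}\circ\tilde\pi$ ``become solutions of the same tail-stage functional equation.'' But $\pi$ solves the tail equation determined by $g$, while $\theta^{-1}\circ\tilde\pi$ solves the tail equation determined by $\theta^{-1}\circ\tilde g\circ\theta$. These two conjugacy maps agree to order $\ell$ (because the polynomial backbones match on $X_0$, cf.\ Remark~\ref{remark:uniqueness_approxlemma}), but beyond order $\ell$ they are a priori unrelated when $g,\tilde g$ are general $C^k_b$ maps. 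The uniqueness of the fixed point $\pi^>$ in Lemma~\ref{lemma:contraction} (and the uniqueness of $\pi$ given $g$ in Lemma~\ref{lemma:uniq1}) are statements for a \emph{fixed} right-hand-side map $g$, so they do not allow you to identify $\pi$ with $\theta^{-1}\circ\tilde\pi$ unless you already know $g = \theta^{-1}\circ\tilde g\circ\theta$ --- which is precisely what you propose to deduce afterward ``by composing with $f$.'' The paper avoids this by first treating $\tilde g\circ\theta = \theta\circ g$ as a fresh semiconjugacy problem on $X_0$: it posits $\theta = \tilde\theta + h$ with $\tilde\theta = \tilde\pi|_{U_0}\circ(\pi|_{U_0})^{-1}$ as an approximate solution, solves for the correction $h$ by a contraction in $\Gamma_{\ell,\ell}$, and only \emph{then} invokes Lemma~\ref{lemma:uniq1} to conclude $\theta\circ\pi = \tilde\pi$ (after which one sees $h=0$). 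Your final formula for $\theta$ is thus correct, but the chain of implications needed to establish it is missing a step; only in the special case $g=\tilde g=A_0$ linear does your shortcut apply directly (there the $g$'s agree tautologically, which is how the paper handles the linear case too).
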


\begin{proof}
    The proof is deferred to Section~\ref{sect:proof}.
\end{proof}

\begin{remark} \label{remark:diffeo}
    Given a (local) $C^r_b$ diffeomorphism $\theta$ on $X_0$ and a $C^r_b$ solution pair $(g,\pi)$ of \eqref{eq:map_foliation_diag}, it is clear that $(\theta \circ g \circ \theta^{-1}, \theta \circ \pi)$ is also a $C^r_b$ solution pair.
\end{remark}

Combining this remark with Theorem~\ref{thm:maps}\ref{thm:maps_2}, we obtain the following corollary.

\begin{corollary} 
Under the assumptions of Theorem~\ref{thm:maps},
    for any (local) $C^r_b$ diffeomorphism $\vartheta$ on $X_0$, there is a (locally) unique $C^r_b$ solution pair $(g,\pi)$ of \eqref{eq:map_foliation_diag} such that $\pi \vert_{X_0} = \vartheta$. 
\end{corollary}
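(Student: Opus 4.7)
The plan is to reduce this corollary directly to Theorem~\ref{thm:maps} by exploiting the post-composition freedom recorded in Remark~\ref{remark:diffeo}. For the existence half, I would first invoke Theorem~\ref{thm:maps}\ref{thm:maps_1} to extract a reference solution pair $(g_0,\pi_0)$ on some neighborhood $U$ of $0$, with $\pi_0(0)=0$ and $D\pi_0(0)|_{X_0} \in \mathrm{Aut}(X_0)$. The inverse function theorem then ensures that $\sigma_0 := \pi_0|_{U \cap X_0}$ is a local $C^r_b$ diffeomorphism of $X_0$ fixing the origin. Noting that $\vartheta(0) = 0$ must hold implicitly so that the requirement $\pi|_{X_0} = \vartheta$ is consistent with $\pi(0) = 0$, the natural candidate transformation is $\theta := \vartheta \circ \sigma_0^{-1}$, which is a local $C^r_b$ diffeomorphism of $X_0$ at $0$. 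By Remark~\ref{remark:diffeo}, the pair $(g,\pi) := (\theta \circ g_0 \circ \theta^{-1},\, \theta \circ \pi_0)$ is then a $C^r_b$ solution of \eqref{eq:map_foliation_diag}, and one has $\pi|_{X_0} = \theta \circ \sigma_0 = \vartheta$ by construction.

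For uniqueness, I would take two $C^r_b$ solution pairs $(g,\pi)$ and $(\tilde g,\tilde\pi)$ with $\pi|_{X_0} = \tilde\pi|_{X_0} = \vartheta$, and apply Theorem~\ref{thm:maps}\ref{thm:maps_2} (with $k=r \geq \ell+1$) to secure a unique local $C^r_b$ diffeomorphism $\theta$ on $X_0$ satisfying $\tilde\pi = \theta \circ \pi$ and $\tilde g = \theta \circ g \circ \theta^{-1}$. Restricting the first relation to $X_0$ collapses it to $\vartheta = \theta \circ \vartheta$, forcing $\theta = \mathrm{id}_{X_0}$ on the open neighborhood $\vartheta(U \cap X_0)$ of $0$ in $X_0$; hence $\tilde\pi = \pi$ and $\tilde g = g$ on the corresponding neighborhoods.

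There is no substantive obstacle here: the entire argument is bookkeeping built on top of Theorem~\ref{thm:maps} and Remark~\ref{remark:diffeo}. The only point requiring care is tracking the shrinking neighborhoods of $0$ on which the various compositions are defined, which I would handle by replacing $U$ with a sufficiently small sub-neighborhood before each step so that all the inverses and compositions land in the domain of the next map in the chain.
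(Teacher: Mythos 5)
Your proof is correct and takes essentially the same route as the paper, which simply states that the corollary follows by combining Remark~\ref{remark:diffeo} with Theorem~\ref{thm:maps}\ref{thm:maps_2}. Your write-up is the explicit elaboration of that combination: conjugating a reference solution by $\theta = \vartheta \circ \sigma_0^{-1}$ for existence, and for uniqueness noting that the $\theta$ produced by Theorem~\ref{thm:maps}\ref{thm:maps_2} must restrict to the identity on a neighborhood of $0$ in $X_0$.
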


\begin{remark}
    It should be noted that, by the uniqueness claim, the leaf passing through the origin, $\pi^{-1}(0)$, is the invariant manifold constructed in \cite{de1997invariant}.
    In comparison, due to the invariance relation \eqref{eq:invariance_maps} taking a more workable form,
    $C^r$ smoothness becomes much easier to achieve in the present setting (cf.\ Lemma~\ref{lemma:Tfrechet}), but this comes at the expense of a slightly more restrictive spectral assumption in \ref{hyp4:spec1} (compare with hypothesis (vi) of Theorem 2.1, \cite{de1997invariant}).
\end{remark}

\begin{remark}
    The regularity $r$ could be permitted to include non-integer values (which stand for the $(r-\lfloor r \rfloor)$-Hölder continuity of the $\lfloor r \rfloor$-th derivative).
    This change corresponds to only minor adjustments throughout the proof (cf.\ \cite{kvalheim2021existence}), in particular,
    the replacement of results of \cite{Irwin72}, at the points where we have used them, by those of \cite{de1998regularity} (e.g., in Lemma~\ref{lemma:Tfrechet}).
\end{remark}

A \textit{semigroup} (of operators on a Banach space $X$) is a map $T: \mathbb{R}^{\geq 0} \to \mathcal{L}(X)$ satisfying the semiflow property \eqref{eq:sf}.
A semigroup is \textit{strongly continuous} if it moreover satisfies
\begin{displaymath}
    \lim_{t \searrow 0}\vert T_t x - x \vert = 0, \qquad \text{for all } x \in X.
\end{displaymath}
The \textit{infinitesimal generator} of $T$ is the linear operator $G : \mathrm{dom}(G) \to X$ given by
\begin{displaymath}
    Gx = \lim_{t \searrow 0} \frac1t (T_t x - x)
\end{displaymath}
on its domain 
\begin{displaymath}
    \mathrm{dom}(G) = \left\{x \in X \; \Big| \; \text{the limit } \lim_{t \searrow 0} \frac1t (T_t x - x) \text{ exists} \right\}.
\end{displaymath}
If $T:\mathbb{R} \to \mathcal{L}(X)$ satisfies \eqref{eq:sf} for all $t,s \in \mathbb{R}$, we call it a \textit{group} (of operators on $X$).

A strongly continuous semigroup $T_t$ is called analytic (roughly speaking) if the function $t \mapsto T_t x$ extends to a holomorphic function on a cone about the positive real axis (in $\mathbb{C}$) for all $x \in X$.
An operator $G$ is called sectorial 
if the complement of its spectrum contains a set of the form 
\begin{equation}
    \big\{ \, \lambda \in \mathbb{C} \; \big\vert \;
    | \mathrm{arg} (\lambda-\omega) | < \theta \, \big\}
    \label{eq:sect_domain}
\end{equation}
for some $\omega \in \mathbb{R}$ and $\theta \in (\pi/2,\pi)$, and moreover, the resolvent of $G$ is bounded by $C/ |\lambda-\omega|$ over \eqref{eq:sect_domain} for some uniform constant $C$.
A classical result in semigroup theory says that an operator $G$ with closed graph and dense domain defines an analytic semigroup if and only if it is sectorial.
Throughout this work we will assume that all semigroups are at least strongly continuous, and hence that generators have closed graphs and dense domains.

The following set of assumptions are for the semiflow version of Theorem~\ref{thm:maps}.

\begin{enumerate}[label =(A.\arabic*)] 

\item \label{hypA1} 
Let $(X, | \cdot | )$ denote a Banach space.
Let $\varphi$ be a local semiflow with domain $\mathcal{D}^\varphi \subset \mathbb{R}^{\geq 0} \times X$ and a Lyapunov stable (with respect to $| \cdot |$) fixed point at $0$.
Suppose that, for each fixed $t \geq 0$, the map
$x \mapsto \varphi_t(x)$ is of class $C^r_b$ on $\mathcal{D}_t^\varphi \cap O$, for some fixed open neighborhood $O$ of $0$, and $r \in \mathbb{N} \cup \{\omega,\infty \}$.

\item \label{hypA2} 
Let $X$ and $\varphi$ be as in \ref{hypA1}, and assume in addition that $X$ admits a splitting as in \ref{hyp1:X}.
Suppose that $t \mapsto D \varphi_t(0)$ is a strongly continuous semigroup on $X$ with generator $G$ such that $X_0 \subset \mathrm{dom}(G)$ and $D \varphi_t(0) (X_1) \subset X_1$ for all $t \geq 0$.
Assume there exists $\tau > 0$ such that $\sigma(D\varphi_\tau(0)) \subset B_1^{\mathbb{C}}$ and for which \ref{hyp3:ell} and \ref{hyp4:spec1} hold (with $A_0 = \widetilde{P}_0 D\varphi_\tau(0) \imath_0 $ and $A_1 = \widetilde{P}_1 D\varphi_\tau(0) \imath_1 $). 
Suppose moreover
that $(t,x_0) \mapsto D^i (\varphi_t \circ \imath_0) (x_0)$ is jointly continuous on the line $\mathbb{R}^{\geq 0} \times \{ 0 \}$ for all $0 \leq i \leq \ell + 1$ (with $\ell$ as in \ref{hyp3:ell} above).

\item \label{hypA3} 
The spectrum of $D \varphi_\tau (0)$ satisfies \ref{hyp5:spec2} for $m= 2$ (with $A_0 = \widetilde{P}_0 D\varphi_\tau(0) \imath_0 $).

\end{enumerate}

\begin{theorem} \label{thm:sf}
    Suppose hypotheses \ref{hypA1} and \ref{hypA2}.
    Then, the following assertions hold.
    \begin{enumerate}[label =({\roman*})]
    \item \label{thm:sf_1} There exist open neighborhoods of $0$, $V \subset U \subset X$, a $C^r_b$ map $\pi : U \to X_0$ with 
    \begin{subequations}\label{eq:sf_foliation_diag}
    \begin{equation}
        \pi(0) = 0, \qquad D\pi(0) \vert_{X_0} \in \mathrm{Aut}(X_0),
    \end{equation}
    and a $C^r_b$ diffeomorphism $\vartheta_t$
    such that
    \begin{equation}
    \begin{tikzcd} 
    V \arrow[r, "\varphi_t"] \arrow[d, "\pi"']
    & U \arrow[d, "\pi"] \\
    \pi (V)  \arrow[r, "\vartheta_t"']
    &  \pi(U)
    \end{tikzcd}
    \end{equation}\end{subequations}
    commutes for all $t \geq 0$, with $\vartheta$ satisfying the semiflow property \eqref{eq:sf}.
    If $\varphi \vert_{\mathcal{D}^\varphi \cap (\mathbb{R}^{\geq 0} \times X_0)}$ is jointly $C^k$, $0 \leq k \leq r$, then $\vartheta$ extends to define a jointly $C^k$ flow on an open domain $\mathcal{D}^\vartheta \subset \mathbb{R} \times X_0$.
    One may alternatively assume that 
    $X_0$ is finite dimensional and that $t \mapsto \varphi_t(x_0)$ is continuous for all $x_0 \in X_0$ (i.e., mere separate continuity is assumed over $\mathbb{R}^{\geq 0} \times X_0$) to conclude that $\vartheta$ extends to a jointly $C^r$ flow, if $r \in \mathbb{N} \cup \{ \infty \}$. 
    If \ref{hypA3} holds, then one may choose $\vartheta : \mathbb{R} \to \mathrm{Aut}(X_0)$ to be a group (of operators on $X_0$) with the same smoothness.
    \item \label{thm:sf_2} 
    The solution constructed in \ref{thm:sf_1} is unique among $C^{\ell+1}_b$ solutions in the following sense. 
    Given two pairs of $C^k_b$ (in space), $\ell + 1 \leq k \leq r$ ($k = \omega$ if $r = \omega$), solutions to \eqref{eq:sf_foliation_diag}, $(\vartheta,\pi)$ and $(\tilde{\vartheta},\tilde{\pi})$, there exists a 
    unique
    local $C^{k}_b$ diffeomorphism $\theta$ on $X_0$ such that
    \begin{equation}
        \tilde{\vartheta_t} = \theta \circ \vartheta_t \circ \theta^{-1}, \quad t \geq 0\footnote{If $\vartheta$ and $\tilde{\vartheta}$ are flows, $t \in \mathbb{R}$.}, \qquad \tilde{\pi} = \theta \circ \pi.
        \label{eq:uniqueness_sf}
    \end{equation} 

    If both $\vartheta_t$ and $\tilde{\vartheta_t}$ are linear, then so is $\theta$.
    \end{enumerate}
\end{theorem}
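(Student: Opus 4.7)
The plan is to promote Theorem~\ref{thm:maps}, which treats a single map, to the semiflow setting by bootstrapping from $\varphi_\tau$ via commutativity of the semigroup and the uniqueness assertion in Theorem~\ref{thm:maps}\ref{thm:maps_2}. First, I would apply Theorem~\ref{thm:maps} to $f := \varphi_\tau$; the hypotheses \ref{hyp1:X}--\ref{hyp4:spec1} transcribe directly from \ref{hypA1}--\ref{hypA2}, with the block form~\eqref{eq:A} of $A = D\varphi_\tau(0)$ being forced by $D\varphi_\tau(0)(X_1) \subset X_1$. This produces a $C^r_b$ submersion $\pi: U \to X_0$ and a polynomial $g$ (of degree at most $\ell$, linear if \ref{hypA3} is in force) satisfying $\pi \circ \varphi_\tau = g \circ \pi$ on $U$.

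For each $t \geq 0$, I observe that the pair $(g, \pi \circ \varphi_t)$ is another $C^r_b$ solution of the same map invariance equation for $\varphi_\tau$: commutativity gives $(\pi \circ \varphi_t) \circ \varphi_\tau = g \circ (\pi \circ \varphi_t)$, and the derivative $D(\pi \circ \varphi_t)(0)|_{X_0} = D\pi(0)|_{X_0} \circ A_0^t$ remains in $\mathrm{Aut}(X_0)$ because, by invariance of $X_1$, the assignment $A_0^t := \widetilde{P}_0 D\varphi_t(0) \imath_0$ defines a semigroup on $X_0$ whose element $A_0^\tau$ is invertible, whence each $A_0^t \in \mathrm{Aut}(X_0)$ by iterating the factorization $A_0^s A_0^{\tau-s} = A_0^\tau$. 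Invoking Theorem~\ref{thm:maps}\ref{thm:maps_2} then produces a unique local $C^r_b$ diffeomorphism $\vartheta_t$ of $X_0$ with $\pi \circ \varphi_t = \vartheta_t \circ \pi$; both $\vartheta_0 = \mathrm{id}$ and $\vartheta_{t+s} = \vartheta_t \circ \vartheta_s$ then follow from the same uniqueness principle, since both sides of each identity intertwine $(g, \pi)$ with the corresponding $(g, \pi \circ \varphi_\cdot)$. The Lyapunov stability of $0$ arranges all these identities to hold on a common neighborhood $V$ of $0$.

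Continuity and smoothness of $\vartheta_t$ in $t$ are extracted from the explicit formula $\vartheta_t(y) = \pi(\varphi_t(\imath_0 \sigma(y)))$, where $\sigma$ is the local inverse of $\pi|_{X_0}$ about $0$ (afforded by the inverse function theorem): the joint continuity hypothesis in \ref{hypA2} transfers to $\vartheta_t$ at $y=0$, joint $C^k$-regularity of $\varphi|_{X_0}$ propagates through the formula, and the finite-dimensional, separately continuous branch is handled by a standard equicontinuity argument on compact subsets of $X_0$. Since $D\vartheta_t(0)$ is conjugate to $A_0^t$ and thus invertible, each $\vartheta_t$ is a local diffeomorphism and the semiflow extends to a flow via $\vartheta_{-t} := \vartheta_t^{-1}$. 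Under \ref{hypA3}, $g = A_0$ is already linear; comparing the two solutions $(g, \pi \circ \varphi_t)$ and $(g, A_0^t \circ \pi)$ via the final clause of Theorem~\ref{thm:maps}\ref{thm:maps_2}---which forces the intertwining diffeomorphism between solutions with linear $g$ to equal $D\tilde\pi(0)|_{X_0}(D\pi(0)|_{X_0})^{-1}$, here reducing to the identity---yields $\pi \circ \varphi_t = A_0^t \circ \pi$, rendering $\vartheta_t = A_0^t$ linear and extending to a group on $X_0$. Finally, assertion~\ref{thm:sf_2} is obtained by applying Theorem~\ref{thm:maps}\ref{thm:maps_2} at $t = \tau$ to the pairs $(\vartheta_\tau, \pi)$ and $(\tilde\vartheta_\tau, \tilde\pi)$, extracting $\theta$, and then propagating $\tilde\vartheta_t = \theta \circ \vartheta_t \circ \theta^{-1}$ to every $t$ via surjectivity of $\pi$ onto a neighborhood of $0$.

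The main obstacle I anticipate is the bookkeeping of domains: the map-level uniqueness holds only on a neighborhood whose size depends on the $C^{\ell+1}_b$ norms of the solution pairs, which themselves vary with $t$. Securing a common neighborhood on which the intertwining and semiflow identities hold uniformly over compact time intervals will require careful control via Lyapunov stability of $0$ together with the strong continuity of the linearized semigroup, in order to guarantee that the neighborhoods do not collapse and that $\vartheta$ genuinely inherits the semiflow and regularity structure claimed.
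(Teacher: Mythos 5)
Your plan mirrors the paper's argument almost exactly in outline, but the step you rightly flag as ``the main obstacle'' is a genuine gap, and the tools you invoke for it are not the ones that close it. Controlling the domain on which the uniqueness clause of Theorem~\ref{thm:maps}\ref{thm:maps_2} applies, uniformly over $s$, requires a uniform $C^{\ell+1}_b$ bound on the transition maps $\tilde{\theta}_s = (\pi \circ \varphi_s)\vert_{U_0} \circ (\pi\vert_{U_0})^{-1}$ near $0$, together with a uniform radius on which they are diffeomorphisms. Strong continuity of the linearized semigroup and Lyapunov stability of $0$ give you $C^0$ control but say nothing about the derivatives $D^i(\pi \circ \varphi_s \circ \imath_0)$ for $2 \le i \le \ell+1$; that control comes precisely from the additional hypothesis in \ref{hypA2} that $(t,x_0) \mapsto D^i(\varphi_t \circ \imath_0)(x_0)$ is jointly continuous along the line $\mathbb{R}^{\geq 0}\times\{0\}$ for $0 \le i \le \ell+1$. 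The paper exploits this via a compactness argument over a fixed time interval $[0,2\tau]$: covering $[0,2\tau]$ by finitely many intervals on which the relevant derivatives stay $\varepsilon$-close to their values at $x_0=0$, extracting a single $\delta$ such that $\sup_{s\in[0,2\tau]}\|\pi\circ\varphi_s\circ\imath_0\|_{C^{\ell+1}_b(B_\delta^{X_0})}$ is finite and each $\pi\circ\varphi_s\vert_{B_\delta^{X_0}}$ is a uniform-radius diffeomorphism (by a contraction-mapping argument on $y_0 + x_0 - (D\eta_s(0))^{-1}\eta_s(x_0)$). Without this specific assumption, there is no way to prevent the uniqueness neighborhood from collapsing as $s$ varies.

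A further point you gloss over: even once uniformity is secured over $[0,2\tau]$, the paper does \emph{not} try to apply the uniqueness clause of Theorem~\ref{thm:maps}\ref{thm:maps_2} directly for arbitrary $t$. Instead it \emph{defines} $\vartheta_s := \vartheta_{s-k\tau}\circ g^k$ for $k\tau \le s < (k+1)\tau$, with the $[0,2\tau]$ window chosen so that $\vartheta_\tau = g$ lands inside the uniformity interval and commutes with the $\vartheta_q$, $q\le\tau$; the cocycle property for all $t,s$ is then assembled from this stacking plus uniqueness on the short window. Your proposal instead implicitly requires the uniform neighborhood to work for all $t\ge 0$ simultaneously, which is strictly more than the hypotheses deliver (and more than is needed). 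With the correct invocation of the joint-continuity hypothesis and the $\tau$-periodic stacking device, your plan would coincide with the paper's proof; as written, the central estimate is missing.
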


\begin{proof}
    The proof is deferred to Section~\ref{sect:proof}.
\end{proof}

It is often preferable to formulate spectral hypotheses (as in \ref{hypA2} and \ref{hypA3}) in terms of the generator $G$ of the linearized semiflow $t \mapsto D \varphi_t(0)$ instead of its time-$\tau$ map $D \varphi_\tau(0)$. 
For this, we need the following result.

\begin{theorem}[Spectral mapping for semigroups, Theorems 3.7 and 3.10 of \cite{engel2000one}] \label{thm:SM}
    For a generator $G: \mathrm{dom}(G) \to X$ of a strongly continuous semigroup $\{T_t \}_{t \geq 0}$ on a Banach space $X$, we have the identities
    \begin{subequations} \label{eq:SM_point_residual}
    \begin{align}
        \mathrm{P} \sigma ( T_t ) \setminus \{ 0\} &= e^{t \mathrm{P} \sigma (G)}, \\ 
        \mathrm{R} \sigma ( T_t ) \setminus \{ 0\} &= e^{t \mathrm{R} \sigma (G)}, 
    \end{align}\end{subequations}
    for all $t \geq 0$.
    If in addition $\{T_t \}_{t \geq 0}$ is eventually norm-continuous, then
    \begin{equation}
        \sigma ( T_t ) \setminus \{ 0\} = e^{t \sigma (G)}, \qquad t \geq 0, 
        \label{eq:SM_norm_cont}
    \end{equation}
    holds.
\end{theorem}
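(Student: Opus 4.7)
The plan is to establish the identities in two stages: first the point- and residual-spectrum statements \eqref{eq:SM_point_residual}, and then the full identity \eqref{eq:SM_norm_cont} under eventual norm-continuity. In both stages the easy inclusions are direct. If $G v = \lambda v$, then $s \mapsto T_s v$ and $s \mapsto e^{\lambda s} v$ solve the same Cauchy problem $u' = G u$, $u(0) = v$, on $\mathrm{dom}(G)$, so they coincide and $T_t v = e^{\lambda t} v$; hence $e^{t \mathrm{P}\sigma(G)} \subseteq \mathrm{P}\sigma(T_t) \setminus \{0\}$. For the residual inclusion, I would combine the intertwining $T_s(\lambda - G) = (\lambda - G) T_s$ on $\mathrm{dom}(G)$ with the Hahn--Banach characterization of $\mathrm{R}\sigma$ via non-density of range.

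For the converse on the point spectrum, I would use the Fourier/resolvent trick. Given $\mu \in \mathrm{P}\sigma(T_t) \setminus \{0\}$ with $T_t x = \mu x$, fix any $\lambda_0$ with $e^{\lambda_0 t} = \mu$ and write the full preimage of $\mu$ under $z \mapsto e^{t z}$ as $\lambda_k = \lambda_0 + 2\pi i k / t$, $k \in \mathbb{Z}$. Set
\begin{equation*}
    y_k := \int_0^t e^{-\lambda_k s} T_s x \, ds.
\end{equation*}
The standard identity $(\lambda - G) \int_0^t e^{-\lambda s} T_s x \, ds = x - e^{-\lambda t} T_t x$ (which also certifies that $y_k \in \mathrm{dom}(G)$), combined with $T_t x = e^{\lambda_k t} x$, gives $(\lambda_k - G) y_k = 0$; hence $\lambda_k \in \mathrm{P}\sigma(G)$ whenever $y_k \neq 0$. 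At least one $y_k$ must be nonzero, because the $y_k$ are, up to normalization, the Fourier coefficients on $[0,t]$ of the continuous function $s \mapsto e^{-\lambda_0 s} T_s x$, whose simultaneous vanishing would force $T_0 x = x = 0$. The residual identity then follows by applying the point identity to the sun-dual semigroup on $X^\odot \subseteq X^*$ and using the standard Hahn--Banach duality between residual spectrum of $T$ and point spectrum of its adjoint.

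The full identity \eqref{eq:SM_norm_cont} upgrades the statement to all of $\sigma$, the missing component being the continuous (equivalently, approximate point) spectrum. Here the Fourier construction produces only \emph{approximate} eigenvectors for $G$ from approximate eigenvectors of $T_t$, and the main task is to promote these to genuine elements of $\sigma(G)$ without losing any spectrum of $T_t$. The key ingredient provided by eventual norm-continuity is that $\sigma(G)$ cannot accumulate along a vertical line $\{\Re z = \omega\}$ in a way that would produce a singularity of the norm-continuous branch $t \mapsto T_t$; combined with a Dunford-calculus decomposition of $T_t$ over an annular neighborhood of $e^{\lambda t}$, this localizes $\sigma(T_t) \setminus \{0\}$ to the exponential image of $\sigma(G)$. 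I expect this last step to be the main obstacle: the handling of imaginary-direction accumulation in $\sigma(G)$ is delicate and is precisely the content of Theorem 3.10 in Engel--Nagel. Since it is standard semigroup theory and tangential to the paper's main thrust, I would invoke it as a citation rather than reproduce the full argument.
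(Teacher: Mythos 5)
The paper does not prove this result; it is stated as a direct citation to Theorems 3.7 and 3.10 of Engel--Nagel, so there is no in-paper argument to compare against. Your sketch correctly reconstructs the standard Engel--Nagel proof: the forward inclusion $e^{t\mathrm{P}\sigma(G)} \subseteq \mathrm{P}\sigma(T_t)$ via uniqueness for the abstract Cauchy problem, the converse via the resolvent-integral identity $(\lambda - G)\int_0^t e^{-\lambda s} T_s x\, ds = x - e^{-\lambda t}T_t x$ applied along the fiber $\lambda_k = \lambda_0 + 2\pi i k/t$, and the observation that the $y_k$ are precisely the Fourier coefficients of the continuous $X$-valued function $s \mapsto e^{-\lambda_0 s}T_s x$ on $[0,t]$, so they cannot all vanish unless $x = 0$ (the vanishing step requires a pairing with functionals and Hahn--Banach since the function is Banach-valued, but this is routine). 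The reduction of the residual-spectrum identity to the point-spectrum identity for the sun-dual semigroup on $X^\odot$ is indeed how Engel--Nagel handle it, and your decision to cite rather than reprove the eventually-norm-continuous case (Theorem 3.10) is sensible --- that argument genuinely requires controlling imaginary-direction accumulation of $\sigma(G)$ and a contour decomposition, and is not something one reconstructs casually. In short: the proposal is correct and matches the cited source; the paper itself simply defers to that source.
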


Analytic semigroups form one class of examples that are eventually norm-continuous,
and they crop up naturally in the context of parabolic semilinear PDEs on bounded domains \cite{henry1981geometric,lunardi1995analytic}, which we anticipate to be the most common area of application for the results herein. 
They moreover have the advantageous property that they may be written as an exponential function of a sectorial operator, making it easy to enforce the required form of $D \varphi_\tau (0)$ and the $X_1$ invariance via the generator.
We shall therefore aim to formulate assumptions for sectorial generators.

\begin{remark}[Remark~3, \cite{kvalheim2021existence}] \label{remark:spectral}
    If the generator $G$ of $D \varphi_t(0)$ is sectorial, and moreover, $X_0$ is finite dimensional, then we may impose the spectral conditions \ref{hyp4:spec1} and \ref{hyp5:spec2} directly on $G$.
    First, note that if \ref{hypA2} is satisfied, then $G$ must be of the form
\begin{displaymath} 
    G = \begin{pmatrix}
        G_0 & 0 \\
        B & G_1
    \end{pmatrix}
\end{displaymath}
with respect to the splitting $X = X_0 \oplus X_1$.
    Now, by \eqref{eq:SM_norm_cont}, if the nonresonance condition \ref{hyp4:spec1} (or \ref{hyp5:spec2}) is to fail for all $\tau > 0$ (recall $\tau > 0$ is arbitrary in \ref{hypA2}), then for any sequence $\tau_k \to 0$ there exist index sets $I_k$ and $J_k$ labeling elements of $\sigma(G_0)$ and $\sigma(G_1)$ respectively, $m_k \in \mathbb{Z}$
    and $\nu_k \in \sigma(G_0)$ such that
    \begin{equation}
        \sum_{i \in I_k} \lambda_i + \sum_{i \in J_k} \mu_i = \nu_k + \frac{2 \pi \mathrm{i} m_k}{\tau_k}.
        \label{eq:spectral_log}
    \end{equation} 
    Since $\sigma(G_0)$ and $\ell$ from \ref{hyp3:ell} are finite, we may choose a subsequence such that $I_k = I$, $\nu_k = \nu$ and $\# J_k < \infty$ are all fixed.
    If $m_k = 0$ for any $k$ we are done, so assume the contrary.
    Then \eqref{eq:spectral_log} and the spectral stability assumption implies that there exists a sequence of elements of $\sigma(G_1)$ with bounded real part and unbounded imaginary part, contradicting the sectoriality of $G$. 
\end{remark}

We have thus obtained the following set of alternative (sufficient but not necessary) assumptions in place of \ref{hypA2}-\ref{hypA3} for Theorem~\ref{thm:sf}.

\begin{enumerate}[label =(\~{A}.\arabic*)] 
\setcounter{enumi}{1}

\item \label{hypA2b} 
Let $X$ and $\varphi$ be as in \ref{hypA1}, and assume in addition that $X$ admits a splitting as in \ref{hyp1:X}.
Suppose that $(t,x_0) \mapsto D^i (\varphi_t \circ \imath_0) (x_0)$ is jointly continuous on the line $\mathbb{R}^{\geq 0} \times \{ 0 \}$ for all $0 \leq i \leq \ell+1$ (with $\ell$ as in \ref{hypA3b});
that $t \mapsto D \varphi_t(0)$ is an analytic semigroup on $X$ with generator $G$ taking the form
\begin{equation} \label{eq:G_sf}
    G = \begin{pmatrix}
        G_0 & 0 \\
        B & G_1
    \end{pmatrix}
\end{equation}
with respect to the splitting $X = X_0 \oplus X_1$ of \ref{hyp1:X}, such that $X_0 \subset \mathrm{dom}(G)$.
Assume moreover that $G$ has compact resolvent and $\sigma(G) \subset \{ z \in \mathbb{C} \, | \, \mathrm{Re} \, z < 0 \}$.

\item \label{hypA3b} Let $1 \leq \ell <r$ be an integer such that
\begin{displaymath}
    (\ell +1) \sup \{ \mathrm{Re} \, \lambda \, | \, \lambda \in \sigma(G) \} < \inf \{ \mathrm{Re} \, \lambda \, | \, \lambda \in \sigma(G_0) \}, 
\end{displaymath}
with $G$ as in \eqref{eq:G_sf}.

\item \label{hypA4b} The spectrum of $G$ from \eqref{eq:G_sf} satisfies 
\begin{displaymath}
   \left( (n-j) \sigma(G_0) +
    j\sigma(G_1) \right) \cap \sigma(G_0) = \emptyset
\end{displaymath}
for all pairs $(j,n)$ of integers such that $2 \leq n \leq \ell$ and $1 \leq j \leq n$, with $\ell$ as in \ref{hypA3b}.

\item \label{hypA5b} The spectrum of $G$ from \eqref{eq:G_sf} satisfies 
\begin{displaymath}
    n\sigma(G_0)  \cap \sigma(G_0) = \emptyset
\end{displaymath}
for all $2 \leq n \leq \ell$, with $\ell$ as in \ref{hypA3b}.

\end{enumerate}

The hypotheses of Theorem~\ref{thm:sf} may be replaced by \ref{hypA1}, \ref{hypA2b}-\ref{hypA4b}, along with the optional assumption \ref{hypA5b} for linearization.

As already indicated in Section~\ref{sect:intro_tech}, the choice of the target space $X_0$ for the submersions was arbitrary in Theorems \ref{thm:maps} and \ref{thm:sf}.
We shall make this a little more precise, as it will be needed in the following section.

\begin{remark} \label{remark:mfd_exist_uniq} 
    We could have equivalently formulated 
    Theorems \ref{thm:maps} and \ref{thm:sf} for any
    $C^r$ smooth manifold $M_0$ 
    in place of $X_0$ (the assertion that $g, \vartheta_t$ can be chosen to be polynomial only holds if $M_0$ is a linear manifold). 
    In this case, the assumption prescribing the form of $G$, \eqref{eq:G_sf} (and similarly for \eqref{eq:A}), is replaced by
    \begin{equation}
        KG = G_0 K,
        \label{eq:G_alt_form}
    \end{equation}
    for $K : X \to T_pM_0$ a projection, for some $p \in M_0$. 
    A projection in this setting amounts to the existence of a splitting $X_0 \oplus X_1 = X$ such that $K$ can be factored as $X_0 \oplus X_1 \to X_0 \to  T_pM_0$, where the first map is the canonical projection and the second one is an isomorphism.
    Consequently, we may view $M_0$ locally about $p$ as being modeled on $X_0$; fix a chart $\sigma$ mapping $p$ to $0$. 
    Since the spectral assumptions placed on $G_0$ are left invariant by the isomorphism, Theorem~\ref{thm:sf} is applicable with $X_0$ and $X_1$ as above, so long as $X_0$ satisfies the additional requirements posed by \ref{hypA2b}.
    Then,
    if $(\vartheta,\pi)$ is a solution to 
    \eqref{eq:sf_foliation_diag}, we may assume $\mathrm{im}(\sigma) \supset \pi(U)$ (up to shrinking $V$ and hence $U$, with notations as in \eqref{eq:sf_foliation_diag}), 
    then existence follows once again from the observation that $\sigma^{-1} \circ \pi$ is a submersion and 
    \begin{displaymath}
        (\sigma^{-1} \circ \vartheta_t \circ \sigma, 
        \sigma^{-1} \circ \pi)
    \end{displaymath}
    also satisfies the invariance relation \eqref{eq:intro_invariance} on $V$. 
    The uniqueness claim continues to hold (locally)  in  $C^{k}_b$, $\ell +1 \leq k \leq r$, with respect to different choices of the  target manifold $M_0$,
    for if $(\pi,\vartheta_t)$ and $(\widetilde{\pi},\widetilde{\vartheta}_t)$ are two $C^{k}_b$ solution pairs with target spaces $M_0$ and $\widetilde{M}_0$ respectively, with two $C^{k}_b$ charts $\sigma: M_0 \supset O \to X_0$ and $\widetilde{\sigma}: \widetilde{M}_0 \supset \widetilde{O} \to X_0$ mapping the respective fixed points to $0$, 
    then \eqref{eq:uniqueness_sf} gives a locally unique $C^k_b$ diffeomorphism $\theta$ around $0$ on $X_0$ such that
    \begin{subequations}
    \begin{align} 
         \widetilde{\sigma} \circ \widetilde{\vartheta}_t \circ \widetilde{\sigma}^{-1} &= \theta \circ \sigma \circ \vartheta_t \circ \sigma^{-1} \circ \theta^{-1}, \\ 
         \widetilde{\sigma} \circ \widetilde{\pi} &= \theta \circ \sigma \circ \pi
        \label{eq:uniqueness_mfd}
    \end{align}\end{subequations} 
    holds locally around $0$ in $X_0$ and $X$ respectively.
    The transition diffeomorphism $\widetilde{\sigma}^{-1} \circ \theta \circ \sigma$ is (locally) uniquely determined by \eqref{eq:uniqueness_mfd}, irrespective of the choice of charts. 
\end{remark}

In other words, what this means is that the uniqueness claims in Theorems \ref{thm:maps} and \ref{thm:sf} actually imply 
the uniqueness of the foliation defined by $\pi$ within the subclass of
$C^{\ell+1}_b$ invariant foliations tangent to $X_1$ at $0$.

\section{Koopman eigenfunctions}
\label{sect:koopman}

The preceding theory of invariant foliations has significant consequences 
extending to the realm of Koopman theory \cite{koopman1931hamiltonian}, a brief introduction to which follows.
The definitions recalled here are in line with \cite{mezic2020spectrum} 
and \cite{kvalheim2021existence,mohr2016koopman}, subject to minor modifications to conform to the setting of semiflows.

\begin{definition}[Koopman eigenfunctions] \label{def:koopman}
    Let \{$\varphi_t\}_{t \geq 0}$ denote a (local) semiflow on a Banach space $X$.
    Let $O \subset X$ be an open subset, and let
   \begin{equation}
        \mathcal{O} = \Big\{ (t,x) \in \mathbb{R}^{\geq 0} \times O \; \big\vert  \; \varphi_s(x) \in O \text{ for all } s \in [0,t] \Big\}. 
        \label{eq:Koopmandomain}
    \end{equation}
    Let $\mathbb{F}$ denote a field, taken to be  $\mathbb{C}$ at all times except for when both $X$ and $\lambda$ below are real, in which case $\mathbb{F} = \mathbb{R}$.
    A function $\psi: O \to \mathbb{F}$ 
    that is not identically zero is called an \textit{open Koopman eigenfunction with eigenvalue $\lambda \in \mathbb{C}$}
    if
    \begin{equation}
        \psi \circ \varphi_t (x) = e^{\lambda t} \psi(x)
        \label{eq:Koopman_def}
    \end{equation}
    holds for all pairs $(t,x) \in \mathcal{O}$.
    If $\mathcal{O} = \mathbb{R}^{\geq 0} \times O$, 
    $\psi$ is simply called a \textit{Koopman eigenfunction} (with eigenvalue $\lambda$).\footnote{These are sometimes referred to as subdomain eigenfunctions if $O$ is a proper subset of $X$ \cite{mezic2020spectrum}; we will not bother to make the distinction here.}
\end{definition}

\begin{remark} \label{remark:koopman_domain}
    Note that the domain $\mathcal{O}$ given in \eqref{eq:Koopmandomain} of Definition~\ref{def:koopman} is equivalent to the one specified in Definition~\ref{def:foliation_invariance} for locally invariant foliations.
    It is an open subset of $\mathbb{R}^{\geq 0} \times O \cap \mathcal{D}^\varphi$ whenever $\varphi : \mathcal{D}^\varphi \to X$ is continuous. 
    To see this, take $(t,x) \in \mathcal{O}$, and
    observe that $\varphi^{-1}(O)$ is an open subset of $\mathcal{D}^\varphi$ containing the compact set $[0,t] \times \{ x \}$. 
    By compactness, we may construct an open neighborhood of $[0,t] \times \{ x \}$ that is of the form $[0,t+\varepsilon) \times B^X_\varepsilon(x) \subset \varphi^{-1}(O)$, thereby providing an open neighborhood of $(t,x)$ in $\mathcal{O}$. 
\end{remark}

Koopman eigenfunctions are usually sought in more restrictive subspaces of functions $O \to \mathbb{F}$, to make them appropriate for their envisioned purpose. 
One normally requires $\psi$ to be at least continuous, but often times a higher degree of regularity is desirable. 
When drawing relations to the preceding theory of invariant foliations about attracting equilibria, we consider eigenfunctions that are at least $C^1$.
For the case of unstable fixed points, this will have to be degraded to $C^0$, as this seems to be the best we can presently achieve.

\begin{definition}[Principal Koopman eigenfunctions; Definition~4, \cite{kvalheim2021existence}]
Let \{$\varphi_t\}_{t \geq 0}$ denote a (local) semiflow on a Banach space $X$ with a fixed point at $0$.
    Let $\psi:O \to \mathbb{F}$ denote a $C^1$ ($C^\omega$ if $X$ is complex) (open) Koopman eigenfunction as in Definition~\ref{def:koopman}, such that $0 \in O$.
    We say $\psi$ is an \textit{(open) principal Koopman eigenfunction at $0$} if $\psi(0) = 0$ and $D\psi (0) \neq 0$.
\end{definition} 

\begin{remark} \label{remark:koopman_ext}
    If $0$ is an asymptotically stable fixed point, Koopman eigenfunctions (and linearizing semiconjugacies) are extendable in a unique way to its basin of attraction, $\mathcal{B}(0)$.
    The proof of this claim is given in
    \cite{lan2013linearization,kvalheim2021existence} for the finite-dimensional setting; it carries over verbatim to the present one.
\end{remark}

The following lemma is an immediate consequence of the definitions.

\begin{lemma} \label{lemma:pkoop_in_spect}
    Suppose $\varphi$ is a $C^1$-in-space ($C^\omega$ in the complex case) local semiflow on a real or complex Banach space $X$ with a fixed point at $0$, such that $t \mapsto D \varphi_t(0)$ is analytic; denote its generator by $G$.
    Given an (open) principal Koopman eigenfunction $\psi$ at $0$ with eigenvalue $\lambda$,
    we have $\lambda \in \mathrm{P}\sigma (G) \cup \mathrm{R}\sigma (G)$. 
\end{lemma}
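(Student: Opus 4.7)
The plan is to differentiate the Koopman eigenvalue relation at the origin, pass to the infinitesimal generator, and then invoke the standard duality between the point spectrum of $G^*$ and the union $\mathrm{P}\sigma(G) \cup \mathrm{R}\sigma(G)$.

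First, I fix $t \geq 0$ and differentiate \eqref{eq:Koopman_def} in the spatial variable at $x = 0$. The local-semiflow caveat that $\psi \circ \varphi_t$ is only defined on $\{ x \, : \, (t,x) \in \mathcal{O} \}$ is not an obstacle: by Remark~\ref{remark:koopman_domain} together with the fact that $0$ is a fixed point lying in $O$, the set $\mathcal{O}$ is an open neighborhood of $\mathbb{R}^{\geq 0} \times \{ 0 \}$, so an entire ball $B_{\delta(t)}^X(0)$ lies in the domain of $\psi \circ \varphi_t$. Differentiating and using $\varphi_t(0) = 0$ then yields
\[ D\psi(0) \circ D\varphi_t(0) = e^{\lambda t}\, D\psi(0), \qquad t \geq 0. \]
Setting $f := D\psi(0)$, viewed as a continuous linear functional on the complexification of $X$ whenever $X$ is real and $\lambda \in \mathbb{C}$, the principal hypothesis guarantees $f \neq 0$. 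For every $x \in \mathrm{dom}(G)$, strong continuity of $T_t := D\varphi_t(0)$ together with continuity of $f$ gives
\[ f(Gx) = \lim_{t \to 0^+} f\!\left( \tfrac{T_t x - x}{t} \right) = \lim_{t \to 0^+} \tfrac{e^{\lambda t} - 1}{t}\, f(x) = \lambda\, f(x). \]
Thus $x \mapsto f(Gx)$ extends to the continuous functional $\lambda f \in X^*$, so $f \in \mathrm{dom}(G^*)$ with $G^* f = \lambda f$, i.e.\ $\lambda \in \mathrm{P}\sigma(G^*)$.

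To conclude, I invoke the standard identification $\mathrm{P}\sigma(G^*) = \mathrm{P}\sigma(G) \cup \mathrm{R}\sigma(G)$, valid for any closed densely defined operator $G$: by Hahn-Banach, $\lambda \in \mathrm{P}\sigma(G^*)$ is equivalent to the existence of a nonzero $f \in X^*$ annihilating $(\lambda I - G)(\mathrm{dom}(G))$, which in turn is equivalent to $\mathrm{range}(\lambda I - G)$ failing to be dense in $X$, i.e.\ $\lambda \in \mathrm{P}\sigma(G) \cup \mathrm{R}\sigma(G)$ (the remaining possibilities being the continuous spectrum or the resolvent set). I do not anticipate any serious obstacle; the only care required lies in the local-semiflow bookkeeping of the first step and in being explicit about passing to the complexification of $G$ whenever $X$ is real and $\lambda$ complex.
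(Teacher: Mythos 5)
Your proof is correct, and it takes a genuinely different route from the paper's. The paper differentiates \eqref{eq:Koopman_def} at $0$ to obtain $D\psi(0)\,D\varphi_t(0) = e^{\lambda t} D\psi(0)$, places $e^{\lambda t}$ in $\mathrm{P}\sigma(D\varphi_t(0)) \cup \mathrm{R}\sigma(D\varphi_t(0))$ via the adjoint, and then invokes the spectral mapping theorem for the point and residual spectra of strongly continuous semigroups (Theorem~\ref{thm:SM}) to pull this back to the generator, resolving the $2\pi \mathrm{i} n / t$ ambiguity by letting $t$ vary. You instead bypass the spectral mapping theorem entirely: from $f \circ T_t = e^{\lambda t} f$ with $f = D\psi(0) \neq 0$, you compute directly that $f(Gx) = \lambda f(x)$ on $\mathrm{dom}(G)$, concluding $f \in \mathrm{dom}(G^*)$ and $G^* f = \lambda f$. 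This is a cleaner argument; it avoids the spectral mapping theorem (so you never have to deal with the phase ambiguity) and works at the level of the generator from the outset.

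One small inaccuracy that does not affect the conclusion: you assert that $\lambda \in \mathrm{P}\sigma(G^*)$ is \emph{equivalent} to $\lambda \in \mathrm{P}\sigma(G) \cup \mathrm{R}\sigma(G)$. The chain up to ``$\mathrm{range}(\lambda I - G)$ not dense'' is indeed an equivalence, but the final step is only an implication: a non-dense range forces $\lambda \in \mathrm{P}\sigma(G) \cup \mathrm{R}\sigma(G)$, whereas an eigenvalue with dense range (for example $0$ for the left shift on $\ell^2$) belongs to $\mathrm{P}\sigma(G)$ yet not to $\mathrm{P}\sigma(G^*)$. So what you actually establish is the inclusion $\mathrm{P}\sigma(G^*) \subset \mathrm{P}\sigma(G) \cup \mathrm{R}\sigma(G)$, which is precisely the direction needed; the ``i.e.'' should be ``which implies''.
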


\begin{proof} 
    Since, for all $t \geq 0$, $\mathcal{O}_t\footnote{Here, $\mathcal{O}_t = \{x \in O \, | \, (t,x) \in \mathcal{O} \}$.} \subset \mathcal{D}^\varphi_t$ is an open neighborhood of $0$ in $X$ on which \eqref{eq:Koopman_def} holds, we may differentiate \eqref{eq:Koopman_def} at $0$ to obtain
    \begin{equation} 
        D \psi (0) D \varphi_t (0) = e^{\lambda t} D \psi (0), \qquad t \geq 0.
        \label{eq:koopmanlinear}
    \end{equation}
    If $X$ is complex and $\psi \in C^\omega$, we have that $D\psi(0) \in X^*$. 
    It follows that $e^{\lambda t} \in \mathrm{P}\sigma ((D \varphi_t (0))^*)$\footnote{Here, $(\cdot)^*$ refers to the dual operator acting on the dual space $X^*$.}, which implies 
    $e^{\lambda t} \in \mathrm{P}\sigma (D \varphi_t (0)) \cup \mathrm{R}\sigma (D \varphi_t (0))$, $t \geq 0$ (see Lemma 6.2.6, \cite{buhler2018functional}). 
    If $X$ is instead real, we may consider its complexification $X^c$ along with the complexified operators $(D \varphi_t(0))^c$ and $(D \psi(0) )^c \in (X^c)^*$  to conclude the same.

    Now, by \eqref{eq:SM_point_residual} of Theorem~\ref{thm:SM} and the same argument as in Remark~\ref{remark:spectral}, we may conclude that $\lambda \in \mathrm{P} \sigma(G) \cup \mathrm{R} \sigma(G)$.
\end{proof}

Within the main scope of applications considered herein, i.e., parabolic PDEs on bounded domains, $G$ is an elliptic operator that has compact resolvent (and is sectorial, \cite{agmon1959estimates,lunardi1995analytic}).
Then, $\sigma(G) = \mathrm{P}\sigma(G)$, and $\lambda$ is a genuine eigenvalue of $G$.
Otherwise, we are met with the slightly intriguing possibility of the contrary, that
a Koopman eigenvalue could be in the residual spectrum of $G$.

Denote the set of $C^k$ (open) principal Koopman eigenfunctions at $0$ by $\mathcal{K}^k(0)$ (resp.\ $\mathcal{K}^k_o(0)$). 
Elements of $\mathcal{K}^k(0)$ are of the form $(\psi,\lambda)$, standing for eigenfunction-eigenvalue pairs.
It will be convenient later on to equate eigenfunctions differing by a choice of observable, i.e., to consider the equivalence relation $\sim$ on $\mathcal{K}^k(0)$ (and $\mathcal{K}^k_o(0)$) given, for $\psi_i : O_i \to \mathbb{F}$ and $\lambda_i \in \mathbb{C}$, $i =1,2$, by
\begin{align*}
    (\psi_1,\lambda_1) \sim (\psi_2,\lambda_2) \qquad   \iff \qquad &\lambda_1 = \lambda_2 \text{ and }  \exists c \in \mathbb{F} \text{ and an open set } O \subset O_1 \cap O_2  \\   &\text{containing $0$ such that }  \psi_1 = c \psi_2 \text{ on } O.
\end{align*} 
Denote by $\mathscr{K}^k(0) = \mathcal{K}^k(0) / \sim$
(and similarly, $\mathscr{K}^k_o(0) = \mathcal{K}^k_o(0) / \sim$ for open eigenfunctions).
Denote further by
\begin{equation}
    \varpi : a + \mathrm{i}b \mapsto (a,b)
    \label{eq:iso_C_R2}
\end{equation}
the isomorphism $\mathbb{C} = \mathbb{R} \oplus \mathrm{i} \mathbb{R} \cong \mathbb{R}^2$.

The following lemma shows how principal Koopman eigenfunctions define foliations.

\begin{lemma} \label{lemma:koopman_foli_1_1_new}
    Given a semiflow $\varphi$ on a Banach space $X$ with a fixed point at $0$, consider its set of principal Koopman eigenfunctions $\mathscr{K}^k(0)$ introduced above, $k \in \mathbb{N} \cup \{\infty,\omega\}$ ($k = \omega$ if $X$ is complex).
    For each $[(\psi,\lambda)] \in \mathscr{K}^k(0)$ (resp.\ $[(\psi,\lambda)] \in \mathscr{K}^k_o(0)$) there exists a unique $C^k$ (resp.\ locally) invariant foliation about $0$ representing it. 
    Moreover, if $[(\psi_1,\lambda_1)]$ and $[(\psi_2,\lambda_2)]$ are locally represented by the same foliation, then $\lambda_1 = \lambda_2$ or $\lambda_1 = \overline{\lambda}_2$; 
    if $\mathrm{Re} \, \lambda_1 <  0$ and the $\mathscr{K}^k(0)$ case is assumed, then 
    $[(\psi_1,\lambda_1)] = [(\psi_2,\lambda_2)]$ or $[(\psi_1,\lambda_1)] = [(\overline{\psi}_2,\overline{\lambda}_2)]$. 
    If $X$ is complex and $ k = \omega$, then it must be the $[(\psi_1,\lambda_1)] = [(\psi_2,\lambda_2)]$ case. 
\end{lemma}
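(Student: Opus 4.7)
The plan is to take the defining submersion of the foliation to be $\psi$ itself, or $\varpi \circ \psi$ in the case where $X$ is real and $\lambda$ is non-real. The first step is to verify that $D\psi(0)$ is surjective onto its codomain $\mathbb{F}$. In the $\mathbb{F} = \mathbb{R}$ case this is immediate from $D\psi(0) \neq 0$. In the non-real $\lambda$, real $X$ case, one differentiates the Koopman identity \eqref{eq:Koopman_def} at $0$ to obtain \eqref{eq:koopmanlinear}, i.e., $D\psi(0) D\varphi_t(0) = e^{\lambda t} D\psi(0)$, and observes that $\operatorname{im} D\psi(0) \subset \mathbb{C}$ is a nonzero $\mathbb{R}$-subspace closed under multiplication by $e^{\lambda t}$; since $e^{\lambda t} \notin \mathbb{R}$ for some $t > 0$, this forces the image to be all of $\mathbb{C}$. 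Since the codomain is finite-dimensional, $\ker D\psi(0)$ is automatically complemented, so $\psi$ (or $\varpi \circ \psi$) is a submersion at $0$, and hence on an open neighborhood by continuity of $D\psi$ and openness of the surjective operators. The foliation defined by this submersion is invariant in the sense of Definition~\ref{def:foliation_invariance} with reduced semiflow $\vartheta_t(z) = e^{\lambda t} z$, directly from \eqref{eq:Koopman_def}. Within-class uniqueness of the representing foliation is immediate, since $\psi_1 = c\psi_2$ on a neighborhood yields identical level sets and hence identical leaves.

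For the claim $\lambda_1 \in \{\lambda_2, \overline{\lambda}_2\}$, I would use that two submersions sharing the same foliation near $0$ differ by post-composition with a local diffeomorphism of the codomain: there exists a local diffeomorphism $h$ with $h(0) = 0$ such that $\psi_2 = h \circ \psi_1$, or analogously $H: \mathbb{R}^2 \to \mathbb{R}^2$ with $\varpi \circ \psi_2 = H \circ \varpi \circ \psi_1$. Substituting into \eqref{eq:Koopman_def} and invoking surjectivity of $\psi_1$ onto a neighborhood of $0$ in its codomain yields the intertwining $h \circ M^1_t = M^2_t \circ h$, where $M^i_t$ denotes multiplication by $e^{\lambda_i t}$ (or its $2 \times 2$ real representation). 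Differentiating at $0$ converts this into the similarity $Dh(0) \, M^1_t = M^2_t \, Dh(0)$, forcing the spectra of $M^1_t$ and $M^2_t$ to agree. In the complex analytic case this gives $\lambda_1 = \lambda_2$ directly; in the real $2 \times 2$ representation the eigenvalues are $\{e^{\lambda_i t}, e^{\overline{\lambda}_i t}\}$, and tracking branches continuously in $t$ starting from $t = 0$ produces $\lambda_1 \in \{\lambda_2, \overline{\lambda}_2\}$.

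The most delicate step is expected to be the strengthened conclusion under $\mathrm{Re}\,\lambda_1 < 0$ in the $\mathscr{K}^k(0)$ case, where one must upgrade $h$ (or $H$) to a \emph{linear} map. After replacing $(\psi_2, \lambda_2)$ by $(\overline{\psi}_2, \overline{\lambda}_2)$ if necessary, we may assume $\lambda_1 = \lambda_2 =: \lambda$, and the task reduces to showing that any $C^k$ local diffeomorphism $H$ with $H(e^{\lambda t} z) = e^{\lambda t} H(z)$ for all $t \geq 0$ is linear. Writing $H(z) = \alpha z + \beta \overline{z} + R(z)$ with $R(z) = o(|z|)$, the commutation immediately gives $\beta(e^{\overline{\lambda}t} - e^{\lambda t}) = 0$, hence $\beta = 0$ whenever $\lambda \notin \mathbb{R}$, while the remainder $R$ satisfies $R(e^{\lambda t} z) = e^{\lambda t} R(z)$; iterating the latter as $t \to \infty$ and using the contraction $|e^{\lambda t} z| \to 0$ together with $|R(w)|/|w| \to 0$ forces $R \equiv 0$ on a neighborhood. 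This produces $H(z) = \alpha z$, hence $\psi_2 = \alpha \psi_1$, yielding $[(\psi_1,\lambda_1)] = [(\psi_2,\lambda_2)]$ (or the conjugated version). In the complex analytic case with $X$ complex and $k = \omega$, the map $h$ is holomorphic, so the $\overline{z}$-branch does not arise; moreover $\overline{\psi}_2$ is anti-holomorphic and thus not in $\mathscr{K}^{\omega}(0)$, eliminating the conjugate ambiguity and leaving only $[(\psi_1,\lambda_1)] = [(\psi_2,\lambda_2)]$.
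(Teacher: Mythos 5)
Your proposal is correct, and the first two stages (building the submersion from $\psi$, the intertwining $h\circ M^1_t = M^2_t\circ h$, differentiating to constrain $\lambda_1,\lambda_2$) run essentially parallel to the paper's proof. In fact your treatment of surjectivity of $D\psi(0)$ in the real-$X$/non-real-$\lambda$ case is somewhat more explicit than the paper's terse ``finite codimension, thereby complemented.''

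Where you genuinely diverge is the final, delicate step: showing the conjugacy $h$ is linear once $\mathrm{Re}\,\lambda<0$ and the full (non-open) eigenfunction case is in force. The paper handles this by appealing to its own heavy machinery — the uniqueness clause of Theorem~\ref{thm:maps}\ref{thm:maps_2} (or, more directly, Lemma~\ref{lemma:uniq1}) applied to the scalar conjugacy equation \eqref{eq:h_conj}, which is itself a contraction-mapping argument over the spaces $\Gamma_{s,l}$. You instead give a self-contained iteration: after subtracting the linear part, the remainder $R$ satisfies $R(e^{\lambda t}z)=e^{\lambda t}R(z)$ on a uniform neighborhood, and combining $|e^{\lambda t}z|\to 0$ with $R(w)=o(|w|)$ forces $R\equiv 0$. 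This is essentially the classical one-dimensional Sternberg/Poincar\'e argument done by hand, and it works because the codomain is at most two real dimensions and the dynamics there are already linear. The paper's route is uniform with the rest of its framework and avoids repeating this elementary estimate, but your version is shorter, more transparent, and requires no reference to Section~\ref{sect:proof}. One small stylistic remark: your ``tracking branches continuously in $t$'' to conclude $\lambda_1\in\{\lambda_2,\overline{\lambda}_2\}$ is correct but slightly informal; the paper's cleaner alternative is to differentiate the matrix relation \eqref{eq:h_conj2} in $t$ at $t=0$, reducing to equality of spectra of the two $2\times 2$ rotation-scaling matrices with entries built from $\lambda_i$.
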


\begin{proof}
    The first assertion is obvious: if $[(\psi,\lambda)] \in \mathscr{K}^k_o(0)$, then $D\psi (0)$ is bounded and linear, hence $\ker ( D\psi(0))$ is a closed linear subspace of finite codimension, thereby complemented in $X$.
    Thus, $\psi$ is a submersion at $0$ that satisfies the invariance relation \eqref{eq:Koopman_def} on the domain $\mathcal{O}$ (as defined in \eqref{eq:Koopmandomain}, see also Remark~\ref{remark:koopman_domain}), 
    and hence defines a $C^k$ (locally) invariant foliation about $0$.  
    Clearly, this foliation is uniquely determined, independent of the representative chosen in $\mathscr{K}^k_o(0)$.

    For the converse, we treat the case $\mathbb{F} = \mathbb{C}$; the $\mathbb{F} = \mathbb{R}$ case is simpler and follows analogously.
    If $[(\psi_1,\lambda_1)]$ and $[(\psi_2,\lambda_2)]$ are locally represented by the same $C^k$ foliation, then there exists a $C^k$ diffeomorphism $h$ of $\mathbb{C}$ (locally about $0$)--which is to be interpreted under the identification $\mathbb{C} = \mathbb{R} \oplus \mathrm{i} \mathbb{R} \cong \mathbb{R}^2$ for general $k$--such that $\psi_2 = h \circ \psi_1$
    holds on some open neighborhood of the origin, $O$, which we may assume to be the domain of both $\psi_1$ and $\psi_2$; denote their corresponding 'Koopman' domains \eqref{eq:Koopmandomain} (i.e., on which \eqref{eq:Koopman_def} holds) by $\mathcal{O}^1$ and $\mathcal{O}^2$.
    Then,
    \begin{equation}
        h \circ e^{ t \lambda_1 } =e^{ t\lambda_2 } h
        \label{eq:h_conj}
    \end{equation}
    holds on $\psi_1 (\mathcal{O}^1_t \cap \mathcal{O}^2_t)$, which is an open neighborhood of $0$ in $\mathbb{C}$ for all $t \geq 0$ fixed.  
    Hence, we may differentiate \eqref{eq:h_conj} (in space) at $0$ to obtain
    \begin{equation}
        D(\varpi h \circ \varpi^{-1})(0) 
        \begin{pmatrix}
            e^{t \, \mathrm{Re} \, \lambda_1} & -e^{t \, \mathrm{Im} \, \lambda_1} \\
            e^{t \, \mathrm{Im} \, \lambda_1} & e^{t  \, \mathrm{Re} \, \lambda_1}
        \end{pmatrix}
        = 
        \begin{pmatrix}
            e^{t \, \mathrm{Re} \, \lambda_2} & -e^{t \, \mathrm{Im} \, \lambda_2} \\
            e^{t \, \mathrm{Im} \, \lambda_2} & e^{t \, \mathrm{Re} \, \lambda_2}
        \end{pmatrix}
        D(\varpi h \circ \varpi^{-1})(0) 
        \label{eq:h_conj2}
    \end{equation}
    for all $t \geq 0$, 
    where we have used the identification \eqref{eq:iso_C_R2} to make sense of the derivative of $h$.
    Proceeding to differentiate \eqref{eq:h_conj2} in time at $t = 0$, we have
    \begin{displaymath}
        D(\varpi h \circ \varpi^{-1})(0) 
        \begin{pmatrix}
             \mathrm{Re} \, \lambda_1 & - \mathrm{Im} \, \lambda_1 \\
             \mathrm{Im} \, \lambda_1 &  \mathrm{Re} \, \lambda_1
        \end{pmatrix}
        = 
        \begin{pmatrix}
             \mathrm{Re} \, \lambda_2 & - \mathrm{Im} \, \lambda_2 \\
             \mathrm{Im} \, \lambda_2 &  \mathrm{Re} \, \lambda_2
        \end{pmatrix}
        D(\varpi h \circ \varpi^{-1})(0),
    \end{displaymath}
    which, since $D(\varpi h \circ \varpi^{-1})(0) \in \mathrm{GL}(\mathbb{R}^2)$, implies that the spectra of the two matrices composed of $\lambda_i$ ($i = 1,2$) must agree.
    We have thus shown that either $\lambda_1 = \lambda_2$ or $\lambda_1 = \overline{\lambda}_2$.
    It follows that $D(\varpi h \circ \varpi^{-1})(0) = \mathrm{id}_{\mathbb{R}^2}$ and 
    $
    D(\varpi h \circ \varpi^{-1})(0) = 
    \begin{pmatrix}
        1 & 0 \\ 0 & -1
    \end{pmatrix}
    $ 
    are the  solutions to \eqref{eq:h_conj2}, corresponding to $\lambda_1 = \lambda_2$ and $\lambda_1 = \overline{\lambda}_2$ respectively, each defined up to multiplication by elements of the form $\begin{pmatrix}
       a & -b \\ b & a 
    \end{pmatrix}$  (which corresponds to complex multiplication once transformed back via $\varpi$).
    
    Fix any solution $D(\varpi h \circ \varpi^{-1})(0) = L \in \mathrm{GL}(\mathbb{R}^2)$ of \eqref{eq:h_conj2}.
    If $\mathrm{Re} \, \lambda_1 < 0$ and $\mathcal{O}^1$ is of the form $\mathbb{R}^{\geq 0} \times O^1$ (i.e., the $\mathscr{K}^k(0)$ case), \eqref{eq:h_conj} above holds on a uniform neighborhood over all $t \geq 0$, which, along with the uniqueness claim 
    \ref{thm:maps_2} of Theorem~\ref{thm:maps} (or perhaps more directly, Lemma~\ref{lemma:uniq1}) applied to \eqref{eq:h_conj}, implies that the solution $h = Dh(0)$ 
    is (locally) the unique one satisfying \eqref{eq:h_conj} with $D(\varpi h \circ \varpi^{-1})(0) = L$.     
    Upon descending to equivalence classes, this yields that the only two possible scenarios are $[(\psi_1,\lambda_1)] = [(\psi_2,\lambda_2)]$ or $[(\psi_1,\lambda_1)] = [(\overline{\psi}_2,\overline{\lambda}_2)]$.
    If $X$ is complex and $ k = \omega$, then $h$ in \eqref{eq:h_conj} is complex analytic and hence $Dh(0) \in \mathbb{C}$, implying that the prior must occur. 
\end{proof}

We remark that if $X$ is real, then we necessarily have that complex Koopman eigenvalues and eigenfunctions appear in conjugate pairs; Lemma~\ref{lemma:koopman_foli_1_1_new} implies that these are the only pairs with coinciding foliations.

\begin{corollary} \label{cor:koop_foli_1_1}
    Let $\varphi$ be a $C^\omega$-in-space, $C^0$-in-time semiflow on a complex Banach space $X$, with a spectrally stable fixed point at $0$.
    Denote by $\mathscr{F}^\omega_1(0)$ the set of codimension-one $C^\omega$ invariant foliations about $0$ (with elements defined up to restriction of domain).
    Then, there is a bijection between the sets $\mathscr{F}^\omega_1(0)$ and $\mathscr{K}^\omega(0)$.
\end{corollary}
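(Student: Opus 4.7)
The plan is to define $\Phi: \mathscr{K}^\omega(0) \to \mathscr{F}^\omega_1(0)$ by sending $[(\psi, \lambda)]$ to the foliation with leaves $\psi^{-1}(c)$, $c \in \mathbb{C}$. Since $\psi$ is scalar-valued with $D\psi(0) \neq 0$, this is a codimension-one $C^\omega$ foliation, and invariance follows from the Koopman relation $\psi \circ \varphi_t = e^{\lambda t} \psi$. Well-definedness on equivalence classes is immediate (representatives differ by scalar multiplication, which leaves level sets invariant), and injectivity of $\Phi$ is a direct consequence of the final claim of Lemma~\ref{lemma:koopman_foli_1_1_new} in the $X$ complex, $k = \omega$ case. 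The substantive content is therefore surjectivity.

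For surjectivity, I would take an arbitrary $\mathscr{F} \in \mathscr{F}^\omega_1(0)$ and represent it by a $C^\omega$ submersion $\pi: U \to \mathbb{C}$ with $\pi(0) = 0$. The invariance of $\mathscr{F}$ produces a local, analytic-in-space semiflow $\vartheta_t$ on $\mathbb{C}$ with $\vartheta_t(0) = 0$ and $\pi \circ \varphi_t = \vartheta_t \circ \pi$. Differentiating at $0$ shows that $D\vartheta_t(0) \cdot D\pi(0) = D\pi(0) \cdot D\varphi_t(0)$, so $D\vartheta_t(0)$ is an eigenvalue of $D\varphi_t(0)^*$ and hence lies in $\sigma(D\varphi_t(0)) \subset B_1^{\mathbb{C}}$ by spectral stability. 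Continuity of $t \mapsto D\varphi_t(0)$ (extracted from the $C^0$-in-time, $C^\omega$-in-space structure via Cauchy's integral formula applied to the terms of the power series of $\varphi_t$ at $0$) combined with the semigroup property then yields $D\vartheta_t(0) = e^{\lambda t}$ for some $\lambda \in \mathbb{C}$ with $\mathrm{Re}\, \lambda < 0$.

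The pivotal step is to linearize $\vartheta_t$. I would invoke Theorem~\ref{thm:sf} applied to $\vartheta$ itself, with ambient and reduced spaces both equal to $\mathbb{C}$ and $X_1 = \{0\}$. In this one-dimensional setting, hypothesis \ref{hyp4:spec1} is vacuous since $\sigma(A_1) = \emptyset$, while the optional linearization hypothesis \ref{hypA3} demands only $e^{(n-1)\lambda\tau} \neq 1$ for $2 \leq n \leq \ell$, which is immediate from $|e^{\lambda\tau}| < 1$; the remaining structural assumptions of \ref{hypA1} and \ref{hypA2} are trivial in dimension one. The theorem therefore supplies a local $C^\omega$ diffeomorphism $h$ of $\mathbb{C}$ around $0$ with $h \circ \vartheta_t = e^{\lambda t} h$ for all $t \geq 0$. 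Setting $\psi := h \circ \pi$ gives $\psi(0) = 0$, $D\psi(0) = Dh(0) D\pi(0) \neq 0$, and $\psi \circ \varphi_t = h \circ \vartheta_t \circ \pi = e^{\lambda t} \psi$; after restricting to a forward-invariant neighborhood of $0$ (available by Lyapunov stability) this is a genuine $C^\omega$ principal Koopman eigenfunction whose level sets recover $\mathscr{F}$, proving $\Phi([(\psi, \lambda)]) = \mathscr{F}$.

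The main obstacle I anticipate is the bookkeeping needed to transfer the regularity hypotheses from the ambient semiflow $\varphi$ to the reduced semiflow $\vartheta$, in particular verifying the joint continuity of $(t, z) \mapsto D^i \vartheta_t(z)$ at $(t, 0)$ required by \ref{hypA2}. This follows from joint continuity of $\vartheta$ combined with a Cauchy-contour representation of the derivatives, but is the only point requiring care; once established, the one-dimensional setting collapses all nonresonance conditions to the trivial requirement $\lambda \neq 0$, which is automatic from strict spectral stability, and Theorem~\ref{thm:sf} closes the argument.
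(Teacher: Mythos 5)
Your proposal is correct and follows essentially the same route as the paper's proof: injectivity and well-definedness come from Lemma~\ref{lemma:koopman_foli_1_1_new}, and surjectivity proceeds by representing the foliation as a submersion $\pi$ onto a one-dimensional space, extracting the reduced semiflow $\vartheta$, identifying $D\vartheta_t(0) = e^{\lambda t}$ with $\mathrm{Re}\,\lambda < 0$, linearizing $\vartheta$ via Theorem~\ref{thm:sf} with $X_1 = \{0\}$, and setting $\psi = h \circ \pi$. The only minor divergence is in how you propose to establish the time-regularity of $\vartheta$ needed for Theorem~\ref{thm:sf} (Cauchy-contour estimates on power-series coefficients), whereas the paper invokes Marsden's Theorems~(8A.3)--(8A.7) to upgrade separate continuity to joint $C^\infty$-smoothness using the finite dimensionality of the target; you correctly flag this as the one point requiring care.
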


\begin{proof}
    Injectivity and well-definedness of the map $\mathscr{K}^\omega(0) \to \mathscr{F}^\omega_1(0)$ was shown in Lemma~\ref{lemma:koopman_foli_1_1_new}.
    We show surjectivity.
    
    Given a $C^\omega$ invariant codimension-one foliation about $0$, we may view it as a $C^\omega$ submersion $\pi : X \supset U \to X_0$, for $X_0 \cong \mathbb{C}$ a one-dimensional subspace complementing $\mathrm{ker} ( D \pi (0) )$, such that
    \begin{equation}
        \pi \circ \varphi_t = \vartheta_t \circ \pi
        \label{eq:invariance_equivproof}
    \end{equation}
    holds on $U$ for all $t \geq 0$, for some local semiflow $\vartheta$ on $X_0$
    (see the discussion in Section~\ref{sect:intro_tech}), which
    inherits the smoothness properties of $\varphi$. 
    
    By separate continuity in space and time, it follows that $\vartheta$ is jointly continuous via Theorem~(8A.3) of \cite{marsden1976hopf} (at $t = 0$ this relies on the finite dimensionality of $X_0$; see Remark~(8A.5;4) therein).
    Joint $C^\infty$-smoothness of $\vartheta$ then follows from its spatial smoothness by Theorems~(8A.6-7) of \cite{marsden1976hopf} (see also Remark~(8A.8;2) thereafter).
    In particular, since $t \mapsto D \vartheta_t (0)$ is a continuous semigroup on $X_0 \cong \mathbb{C}$, 
    there exists a unique $\lambda \in \mathbb{C}$ such that\footnote{See, e.g., Theorem~1.4 of \cite{engel2000one} for a direct proof of this claim.}
    \begin{displaymath}
        D \vartheta_t (0) = e^{\lambda t}, \qquad t \geq 0.
    \end{displaymath}

    As in the proof of Lemma~\ref{lemma:pkoop_in_spect}, we may differentiate \eqref{eq:invariance_equivproof} at $0$ to find that $e^{\lambda t} \in \mathrm{P} \sigma(D \varphi_t(0)) \cup \mathrm{R} \sigma(D \varphi_t(0))$, which, along with the assumptions, implies that $\lambda$ has a strictly negative real part.
    We may apply Theorem~\ref{thm:sf} on $X_0$ (with $X_1 = \{ 0 \}$) to obtain, locally around $0$, a $C^\omega$ linearizing conjugacy $h$, i.e.,
    \begin{displaymath}
        h \circ \vartheta_t \circ h^{-1} = e^{\lambda t}, \qquad t \geq 0.
    \end{displaymath}
    We have thus obtained a $C^\omega$ principal Koopman eigenfunction at $0$, given by $h \circ \pi$, that defines the original invariant foliation we have started off with. 
\end{proof}

\subsection{Stable case} \label{sect:stablecase}

We reproduce here Proposition~6 of \cite{kvalheim2021existence} for the case of semiflows on Banach spaces.
Rather than maintaining full generality in doing so, we instead tailor the results to the subcase most relevant in applications, the case of parabolic equations on bounded domains, so that they can be stated in a better suited form for ease of use.
If the application of interest lies outside this subcase,
we suggest using Theorem~\ref{thm:sf} directly,
which contains all the information needed to produce the results below.

\begin{corollary}  \label{cor:Koop_exist}
Let $\varphi$ be a local semiflow satisfying \ref{hypA1} on a real or complex Banach space $X$ (with $r = \omega$ if $X$ is complex).
Suppose $t \mapsto D \varphi_t (0)$ is an analytic semigroup on $X$ with generator $G$, such that $\sigma(G)$ is discrete and $\sigma(G) \subset \{ \mu \in \mathbb{C} \, |  \, \mathrm{Re} \, \mu < 0 \}$. 
Assume moreover the property that, for each $X_0 \subset \mathrm{dom}(G)$,  $(t,x_0) \mapsto D^i (\varphi_t \vert_{X_0})(x_0)$ is jointly continuous over the line $\mathbb{R}^{\geq 0} \times \{0 \}$ for all $0 \leq i \leq r$\footnote{If $r \in \{ \infty,\omega \}$, this line should be interpreted as 'for all $i \in \mathbb{N}_0$'.}.

Then, for any $\lambda \in \sigma(G)$ for which there exists $\ell \in \{1, \ldots, r-1 \}$ such that
\begin{subequations}\label{eq:nonres_koopman}
\begin{align}
    & (\ell + 1) \sup \{ \mathrm{Re} \, \mu \, | \, \mu \in \sigma(G) \} < \mathrm{Re} \, \lambda,  \label{eq:nonres_koopman_1} \\
    &\lambda \notin n \sigma(G), \qquad 2 \leq n \leq \ell,  \label{eq:nonres_koopman_2}
\end{align}\end{subequations}
hold, there exists an element $[(\psi,\lambda)]$ of $\mathscr{K}^r(0)$.
If in addition $\lambda$ is an algebraically simple eigenvalue, then $[(\psi,\lambda)]$ is uniquely determined. 
\end{corollary}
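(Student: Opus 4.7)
The plan is to apply Theorem~\ref{thm:sf} with a one-dimensional target $X_0$ chosen so that the reduced dynamics linearizes to the scalar group $e^{\lambda t}$, in which case the submersion $\pi$ delivered by the theorem will itself be the desired principal Koopman eigenfunction. Since $\sigma(G)$ is discrete, $\lambda$ is isolated and the Riesz projection yields a $G$-invariant splitting $X = \tilde{X}_0 \oplus \tilde{X}_1$, with $\tilde{X}_0$ the finite-dimensional generalized eigenspace of $\lambda$, $\sigma(G|_{\tilde{X}_0}) = \{\lambda\}$ and $\sigma(G|_{\tilde{X}_1}) = \sigma(G) \setminus \{\lambda\}$. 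Let $\tilde{v}^* \in (\tilde{X}_0)^*$ be a left eigenvector of $G|_{\tilde{X}_0}$ for $\lambda$ (guaranteed to exist by finite-dimensionality), and extend it to $v^* \in X^*$ by declaring $v^*|_{\tilde{X}_1} = 0$; by $G$-invariance of $\tilde{X}_1$ this extension still satisfies $G^* v^* = \lambda v^*$. Choose $u \in \tilde{X}_0$ with $v^*(u) = 1$, and set $X_0 := \mathrm{span}(u) \subset \mathrm{dom}(G)$ and $X_1 := \ker v^*$. Then $X_1$ is closed and $D\varphi_t(0)$-invariant (since $v^*(D\varphi_t(0)\,\cdot) = e^{\lambda t} v^*(\cdot)$), and a direct computation shows $G_0 u = \lambda u$, so $\sigma(G_0) = \{\lambda\}$.

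The spectral hypotheses \ref{hypA2b}--\ref{hypA5b} are then verified as follows. Condition \ref{hypA3b} is identical to \eqref{eq:nonres_koopman_1}. For \ref{hypA4b}, lower-triangularity of $G$ in the splitting $X_0 \oplus X_1$ gives $\sigma(G_1) \subset \sigma(G)$, and the condition reduces to $\mu_1 + \cdots + \mu_j \neq (j + 1 - n)\lambda$ for all $\mu_i \in \sigma(G_1)$ and $2 \leq n \leq \ell$, $1 \leq j \leq n$. The case $j = n$ is exactly \eqref{eq:nonres_koopman_2}, while for $j < n$ the left-hand side has strictly negative real part (since $\sigma(G) \subset \{\mathrm{Re}\,z<0\}$) whereas the right-hand side has non-negative real part (as $j + 1 - n \leq 0$ and $\mathrm{Re}\,\lambda < 0$). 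The optional \ref{hypA5b} reduces to $n\lambda \neq \lambda$ for $2 \leq n \leq \ell$, equivalent to $\lambda \neq 0$. The remaining items of \ref{hypA2b}, namely analytic semigroup generation, the block form of $G$, $X_0 \subset \mathrm{dom}(G)$, and joint continuity of the spatial derivatives along $\mathbb{R}^{\geq 0} \times \{0\}$, are built directly into the corollary's hypotheses.

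Theorem~\ref{thm:sf} together with the linearization option thus supplies open neighborhoods $V \subset U$ of $0$, a $C^r_b$ submersion $\pi \colon U \to X_0$ with $D\pi(0) \neq 0$, and the scalar linear reduced group $\vartheta_t = e^{\lambda t}\,\mathrm{id}_{X_0}$ satisfying $\pi \circ \varphi_t = e^{\lambda t}\pi$ on $V$. Under the spectral gap implicit in \eqref{eq:nonres_koopman_1}, the analytic semigroup $e^{tG}$ decays exponentially and $0$ is locally asymptotically stable, so by Remark~\ref{remark:koopman_ext} the open eigenfunction $\psi := \pi$ (after identifying $X_0 \cong \mathbb{F}$) extends uniquely to a $C^r$ principal Koopman eigenfunction on the basin of attraction, yielding $[(\psi,\lambda)] \in \mathscr{K}^r(0)$.

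For uniqueness when $\lambda$ is algebraically simple, suppose $\tilde{\psi}$ is any second such eigenfunction. Differentiating $\tilde{\psi} \circ \varphi_t = e^{\lambda t}\tilde{\psi}$ at $0$ gives $D\tilde{\psi}(0)\,D\varphi_t(0) = e^{\lambda t}\, D\tilde{\psi}(0)$ for all $t \geq 0$. If $D\tilde{\psi}(0)|_{\tilde{X}_1}$ were nonzero, it would be a left eigenvector of $D\varphi_t(0)|_{\tilde{X}_1}$ for $e^{\lambda t}$, forcing $\lambda \in \sigma(G|_{\tilde{X}_1}) = \sigma(G)\setminus\{\lambda\}$ by spectral mapping --- a contradiction. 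Hence $\ker D\tilde{\psi}(0) \supset \tilde{X}_1$, and simplicity (so $\dim \tilde{X}_0 = 1$) upgrades this to $\ker D\tilde{\psi}(0) = \tilde{X}_1 = X_1$; in particular $D\tilde{\psi}(0)|_{X_0} \in \mathrm{Aut}(X_0)$. Thus $(e^{\lambda t}, \tilde{\psi})$ is a second solution pair to \eqref{eq:sf_foliation_diag} with the same target $X_0$, and the uniqueness clause of Theorem~\ref{thm:sf}\ref{thm:sf_2} yields a local $C^k_b$ diffeomorphism $\theta$ on $X_0$ intertwining the two pairs; linearity of both reduced groups forces $\theta$ to be linear on the one-dimensional $X_0$, i.e., multiplication by some $c \in \mathbb{F}^*$, whence $\tilde{\psi} = c\psi$ locally and $[(\tilde{\psi},\lambda)] = [(\psi,\lambda)]$ in $\mathscr{K}^r(0)$. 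The chief subtlety of this approach is the treatment of \ref{hypA4b} when $\lambda$ has algebraic multiplicity greater than one, for then $\lambda \in \sigma(G_1)$; the elementary real-part argument above bypasses the apparent obstruction.
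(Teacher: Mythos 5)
Your proposal is correct and matches the paper's strategy closely when either $X$ is complex (so $r=\omega$) or $\lambda\in\mathbb{R}$: you both build the splitting from the spectral projection associated to $\lambda$, feed it to Theorem~\ref{thm:sf} with the linearization option, verify \ref{hypA3b}--\ref{hypA5b} via the real-part argument and \eqref{eq:nonres_koopman_2}, and run the uniqueness through Theorem~\ref{thm:sf}\ref{thm:sf_2}.

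However, there is a genuine gap in the case that the paper itself singles out as the nontrivial one: \emph{$X$ real with $\lambda$ non-real}. Your construction sets $X_0=\mathrm{span}(u)$ with $u$ a generalized eigenvector for the (complex) eigenvalue $\lambda$ and $X_1=\ker v^*$ for a left eigenvector $v^*$ with $G^*v^*=\lambda v^*$. If $X$ is a real Banach space, neither of these objects lives in $X$: $u$ and $v^*$ are elements of the complexification $X^c$ and $(X^c)^*$ respectively, so $\mathrm{span}(u)$ is a complex line in $X^c$ and $\ker v^*$ (as a subset of $X$, i.e.\ $\ker\mathrm{Re}\,v^*\cap\ker\mathrm{Im}\,v^*$) has real codimension two, not one. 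You cannot simply ``identify $X_0\cong\mathbb{F}$'' here, since $\mathbb{F}=\mathbb{C}$ when $\lambda\notin\mathbb{R}$ and a one-real-dimensional $X_0$ cannot be isomorphic to $\mathbb{C}$. Moreover, working directly on $X^c$ is ruled out because the complex version of Theorem~\ref{thm:sf} requires $r=\omega$, whereas the corollary permits finite $r$ for real $X$. The paper resolves this by taking the Riesz projection $P_\Omega$ for the conjugate pair $\Omega=\{\lambda,\bar\lambda\}$ (which is stable under conjugation and hence restricts to the real $X$), selecting a suitable functional $Q$ on $\mathrm{im}(P_\Omega)$, forming $K:=\varpi Q\widetilde{P}_\Omega|_X:X\to\mathbb{R}^2$, and then applying Theorem~\ref{thm:sf} with the two-real-dimensional target $X_0:=X/\ker K\cong\mathbb{R}^2$ and the $2\times2$ rotation-dilation $G_0=\begin{pmatrix}\mathrm{Re}\,\lambda&-\mathrm{Im}\,\lambda\\\mathrm{Im}\,\lambda&\mathrm{Re}\,\lambda\end{pmatrix}$. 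The Koopman eigenfunction with eigenvalue $\lambda$ is then recovered by composing $\pi$ with $\varpi^{-1}$, and the conjugate eigenvalue $\bar\lambda$ arises from the conjugacy $\theta=\begin{pmatrix}1&0\\0&-1\end{pmatrix}$ in the uniqueness statement. Your argument would need an analogous real-to-$\mathbb{R}^2$ reduction before invoking Theorem~\ref{thm:sf}; as written, it only proves the corollary under the additional hypothesis that $X$ is complex or $\lambda$ is real.
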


\begin{proof} 
    We treat the case when $X$ is a real Banach space, the complex analytic case is simpler and follows analogously.

    Consider the complexification $X^c =  X \oplus \mathrm{i}X$ of $X$ and $G^c$ of $G$. 
    Since $\sigma(G^c)$ is discrete by assumption, we have $\sigma(G^c) = \mathrm{P}\sigma(G^c) = \mathrm{P}\sigma((G^c)^*)$, and since $X$ was originally real, complex eigenvalues must appear in complex conjugate pairs.
    We may define a (finite rank) projection $P_\Omega : X^c \to X^c$ associated to the spectral subset $\Omega = \{ \lambda \} \cup \{ \overline{\lambda} \}$ via Dunford's integral, which, due to the inclusion of the conjugate pair, leaves the real subspace $X$ of $X^c$ invariant.

    We have (by, e.g., Proposition 2.3.4 of \cite{lunardi1995analytic})
    \begin{displaymath}
         P_\Omega e^{G^c t}  =  e^{\lambda t} \bigg( P_{\lambda} + \sum_{n=1}^\infty \frac{t^n}{n !} D_{\lambda}^n  \bigg) + (1-\delta_\lambda (\overline{\lambda})) e^{\overline{\lambda} t} \bigg( P_{\overline{\lambda}} + \sum_{n=1}^\infty \frac{t^n}{n !} D_{\overline{\lambda}}^n  \bigg), \qquad t \geq 0,
    \end{displaymath} 
    where $D_\mu = (G^c -  \mathrm{id}_X \mu) P_\mu$.
    Since $P\sigma((G^c \vert_{\mathrm{im}(P_\Omega)})^*) = \{ \lambda \} \cup \{ \overline{\lambda} \}$, we have that $\ker((D_\lambda\vert_{\mathrm{im}(P_\Omega)})^*)$ (and also $\ker((D_\lambda\vert_{\mathrm{im}(P_\lambda)})^*)$) is nonempty. 
    If $\lambda \neq \overline{\lambda}$, then $\mathrm{im}(P_\Omega) = \mathrm{im}(P_\lambda) \oplus \mathrm{im}(P_{\overline{\lambda}})$ and we may choose 
    a (not necessarily unique) $Q \in \ker((D_\lambda\vert_{\mathrm{im}(P_\Omega)})^*) \cap \mathrm{im}(P_{\overline{\lambda}})^\perp$; otherwise $P_\Omega = P_\lambda$, and we proceed with some $ Q \in \ker((D_\lambda\vert_{\mathrm{im}(P_\Omega)})^*)$.
    In either case, we have that
    \begin{displaymath}
        Q \widetilde{P}_\Omega e^{G^c t}  =  e^{\lambda t} Q \widetilde{P}_\Omega
    \end{displaymath}
    holds for $t \geq 0$.
    Let $K: = \varpi Q \widetilde{P}_\Omega \vert_X$, with $\varpi$ as in \eqref{eq:iso_C_R2}. 
    Then $K: X \to \mathbb{R}^2$ is a bounded linear operator satisfying
    \begin{displaymath}
        K e^{Gt}
        = 
        \begin{pmatrix}
            e^{t \, \mathrm{Re} \, \lambda} & -e^{t \, \mathrm{Im} \, \lambda} \\
            e^{t \, \mathrm{Im} \, \lambda} & e^{t \, \mathrm{Re} \, \lambda}
        \end{pmatrix}
        K
    \end{displaymath}
    for $t \geq 0$, which implies \eqref{eq:G_alt_form} holds on $\mathrm{dom}(G)$ with $G_0 = 
    \begin{pmatrix}
         \mathrm{Re} \, \lambda & - \mathrm{Im} \, \lambda \\
         \mathrm{Im} \, \lambda &  \mathrm{Re} \, \lambda
    \end{pmatrix}
    $.
    (If $\lambda \in \mathbb{R}$, then $Q$ can be chosen such that $\mathrm{im}( Q \widetilde{P}_\Omega \vert_X ) \subset \mathbb{R} \subset \mathbb{C}$; in this case $K : X \to \mathbb{R}$ and $K e^{Gt} = e^{\lambda t } K$ holds.)

    If we set $X_1 : = \ker ( K)$ and $X_0 := X / X_1$,
    then $K$ is the composition of a projection onto $X_0$
    and the induced map
    $X_0 \cong \mathrm{im}(K)$.
    Since $X_1 \supset \ker(P_\Omega\vert_X)$, $X_0$ must be contained in $P_\Omega(X)$,
    hence $X_0 \subset \mathrm{dom}(G)$, which, combined with the assumptions, implies that \ref{hypA2b} holds.
    Note also that, since eigenvalues appear in conjugate pairs, \eqref{eq:nonres_koopman_2} holds for $\overline{\lambda}$ as well; 
    it follows that \ref{hypA3b}-\ref{hypA4b} hold
    with $G_0$ as above.
    Moreover, \ref{hypA5b} holds trivially.

    We may apply Theorem~\ref{thm:sf} using Remark~\ref{remark:mfd_exist_uniq}, yielding a unique foliation tangent to $X_1$ at $0$, i.e., a submersion $\pi:X \supset U \to \mathbb{R}^2$ satisfying 
    \begin{equation}
        \pi \circ \varphi_t = 
        \begin{pmatrix}
            e^{t \, \mathrm{Re} \, \lambda} & -e^{t \, \mathrm{Im} \, \lambda} \\
            e^{t \, \mathrm{Im} \, \lambda} & e^{t \, \mathrm{Re} \, \lambda}
        \end{pmatrix}
        \pi,
        \label{eq:foli_exist}
    \end{equation}
    defined up to the usual conjugacy relationship \eqref{eq:uniqueness_sf} through elements  $\theta \in \mathrm{GL}(\mathbb{R}^2)$. 
    Transferring \eqref{eq:foli_exist} back via the isomorphism $ \varpi$ now gives two possibilities for a Koopman eigenfunction, one with eigenvalue $\lambda$ and one with $\overline{\lambda}$, through the choices of $\theta = \mathrm{id}_{\mathbb{R}^2}$ and $\theta = \begin{pmatrix}
        1 & 0 \\ 0 & -1
    \end{pmatrix}$, respectively. 
    These are defined up to complex multiplication and each represent an element of $\mathscr{K}^r(0)$.
    By Lemma~\ref{lemma:koopman_foli_1_1_new}, there cannot be more Koopman eigenfunctions associated to this foliation. 
    (In the $\lambda \in \mathbb{R}$ case, we have $\pi \circ \varphi_t = e^{\lambda t } \pi$ in place of \eqref{eq:foli_exist}, which is defined up to conjugacy by $\mathbb{R} \setminus \{0\}$, yielding a unique element of $\mathscr{K}^r(0)$.)

    The only source of ambiguity above was in selecting $Q$. 
    If $\lambda$ is an algebraically simple eigenvalue, then 
    there is only a single choice for $Q$, uniquely determining $X_1$.
    The above procedure thus yields unique elements of $\mathscr{K}^r(0)$ for $\lambda$ and $\overline{\lambda}$.\footnote{It should be noted that if we were to start with the $\overline{\lambda}$ eigenvalue, then its associated dual eigenvector on $\mathrm{im}(P_\Omega)$ is $ \overline{Q}$, which yields the same kernel $X_1$ of $K$ and hence determines the same foliation; the uniqueness claim remains unaffected.}
\end{proof}

\begin{corollary}[Uniqueness of Koopman eigenfunctions] \label{cor:Koop_uniq} 
    Let $\varphi$ be a local semiflow satisfying \ref{hypA1} on a real or complex Banach space $X$ (with $r = \omega$ if $X$ is complex).
    Suppose $t \mapsto D \varphi_t (0)$ is an analytic semigroup on $X$ with generator $G$ satisfying $\sigma(G) \subset \{ \mu \in \mathbb{C} \, |  \, \mathrm{Re} \, \mu < 0 \}$. 
    If $\psi$ is a $C^r_b$ Koopman eigenfunction with eigenvalue $\lambda$ at $0$
    such that there exists an integer $\ell < r$ for which
    \begin{equation}
        (\ell + 1) \sup \{ \mathrm{Re} \, \mu \, | \, \mu \in \sigma(G) \} < \mathrm{Re} \, \lambda
        \label{eq:Koop_uniq_ass}
    \end{equation}
    holds, then $\lambda \in n \sigma(G)$ for some $1 \leq n \leq \ell$. 
\end{corollary}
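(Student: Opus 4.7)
The strategy is to argue by contradiction: assume $\lambda\notin n\sigma(G)$ for every $1\leq n\leq\ell$, show inductively that $D^n\psi(0)=0$ for $0\leq n\leq\ell$, and then exploit the spectral gap \eqref{eq:Koop_uniq_ass} to propagate this flatness into the conclusion $\psi\equiv 0$, contradicting the nontriviality of $\psi$.

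The first step is the Taylor induction. The case $n=0$ follows from $\psi(0)=e^{\lambda t}\psi(0)$ together with $\lambda\neq 0$ (which is forced by the standing assumption, since elements of $n\sigma(G)$ all have strictly negative real part). For the inductive step, assuming $D^k\psi(0)=0$ for all $k<n$, the Faà di Bruno expansion collapses $D^n(\psi\circ\varphi_t)(0)=e^{\lambda t}D^n\psi(0)$ to
\begin{equation*}
    D^n\psi(0)\,(D\varphi_t(0))^{\otimes n}\;=\;e^{\lambda t}\,D^n\psi(0),\qquad t\geq 0,
\end{equation*}
all mixed terms vanishing by the induction hypothesis. If $D^n\psi(0)\neq 0$, viewing it as a left eigenvector of $(D\varphi_t(0))^{\otimes n}$ acting on the space of continuous symmetric $n$-multilinear forms and invoking the Banach space tensor-product spectral theorem (Ichinose) gives $e^{\lambda t}\in\sigma(D\varphi_t(0))^n$. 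Combined with the spectral mapping identity \eqref{eq:SM_norm_cont} (applicable by eventual norm-continuity) and the $t$-variation argument used in Remark~\ref{remark:nonres_sf}, this forces $\lambda\in n\sigma(G)$, contradicting the standing assumption. Hence $D^n\psi(0)=0$ for all $0\leq n\leq\ell$.

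The second step uses Taylor's theorem and the spectral gap. Since $\ell<r$ and all derivatives of $\psi$ up through order $\ell$ vanish at $0$, there is a ball $B_\delta(0)\subset O$ on which $|\psi(y)|\leq C|y|^{\ell+1}$. Writing $s(G):=\sup\{\mathrm{Re}\,\mu:\mu\in\sigma(G)\}<0$, eventual norm-continuity plus spectral mapping yield the growth bound $\|D\varphi_t(0)\|\leq Me^{(s(G)+\epsilon)t}$ for any small $\epsilon>0$, and a routine variation-of-constants estimate exploiting the $C^1$ smoothness of $\varphi$ promotes this to a nonlinear bound $|\varphi_t(x)|\leq M'e^{(s(G)+\epsilon)t}|x|$ uniformly on a neighborhood of $0$. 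Rewriting the functional equation as $\psi(x)=e^{-\lambda t}\psi(\varphi_t(x))$ and combining these estimates yields
\begin{equation*}
    |\psi(x)|\;\leq\;CM'^{\,\ell+1}\,e^{(-\mathrm{Re}\,\lambda+(\ell+1)(s(G)+\epsilon))t}\,|x|^{\ell+1}.
\end{equation*}
For $\epsilon$ small the spectral gap \eqref{eq:Koop_uniq_ass} makes the exponent strictly negative, so letting $t\to\infty$ forces $\psi\equiv 0$ on a neighborhood of $0$. Propagating this vanishing along forward orbits via the functional equation and the local exponential stability of $0$ yields $\psi\equiv 0$, the desired contradiction.

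The main technical obstacle is the spectral identification $e^{\lambda t}\in\sigma(D\varphi_t(0))^n$ in the inductive step: it requires the Banach space tensor-product spectral theorem (to handle that $D^n\psi(0)$ is a left eigenvector of an $n$-fold tensor power acting on a multilinear-form space) together with an appropriate symmetrization. The subsequent $t$-variation needed to conclude $\lambda\in n\sigma(G)$ relies crucially on the full spectral mapping \eqref{eq:SM_norm_cont}; without eventual norm-continuity only the weaker identities \eqref{eq:SM_point_residual} are available, and one would have to separately track which subspectrum the eigenvalue generated by $D^n\psi(0)$ falls into.
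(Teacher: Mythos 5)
Your proof follows essentially the same skeleton as the paper's (contradiction, inductive vanishing of Taylor coefficients up to order $\ell$, then forcing $\psi\equiv 0$ and contradicting nontriviality), but the final step uses a genuinely different mechanism. Your inductive step is conceptually identical to the paper's: the paper differentiates the relation once at a fixed time $\tau$ and writes the resulting linear equation as $\bigl(\mathrm{id}_{\mathrm{M}_i(X;\mathbb{F})} - l_{e^{-\lambda\tau}} r_{D\varphi_\tau(0)}\bigr)D^i\psi(0)=0$, then invokes Theorem~\ref{thm:spectrum_BA_comm} together with \eqref{eq:spectrum_lk_rk} to conclude invertibility under the contradiction hypothesis. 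Your invocation of a tensor-power spectral theorem applied to the left action of $(D\varphi_t(0))^{\otimes n}$ on symmetric multilinear forms is the same argument in different clothing, and likewise relies on \eqref{eq:SM_norm_cont} via eventual norm-continuity plus the $t$-variation step of Remark~\ref{remark:nonres_sf}.

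The divergence is in the final step. The paper, having established $\psi\in\Gamma_{r,\ell}$, applies Lemma~\ref{lemma:S} to the pair $(f,A_0)=(\varphi_\tau,e^{\lambda\tau})$ at a single fixed time $\tau$: the spectral gap \eqref{eq:Koop_uniq_ass} translates (after norm adaptation) into the condition $\Vert A_0^{-1}\Vert\,\Vert A\Vert^{\ell+1}<1$ required by Lemma~\ref{lemma:S}, so $\mathcal{S}h=e^{\lambda\tau}h-h\circ\varphi_\tau$ is injective on $\Gamma_{r,\ell}$ and $\psi\in\ker(\mathcal{S})$ must vanish on a small ball. This is entirely a discrete-time argument and uses only the $C^r_b$ regularity of $\varphi_\tau$ at the one time $\tau$. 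Your route instead constructs a continuous-time nonlinear decay estimate $|\varphi_t(x)|\leq M'e^{(s(G)+\epsilon)t}|x|$ and plugs it into the flatness bound $|\psi|\lesssim|\cdot|^{\ell+1}$, sending $t\to\infty$. The conclusion is the same and your estimate is morally correct, but notice that under \ref{hypA1} alone the $C^r_b$ bound on $\varphi_t$ is hypothesized for each fixed $t$ separately; to get the claimed uniform-in-$t$ exponential estimate one needs a uniform Lipschitz bound over a compact time interval, say $\sup_{s\in[0,\tau]}\mathrm{Lip}(\varphi_s)<\infty$ near $0$, which \ref{hypA1} does not supply directly. The paper's route circumvents this by working only with the single map $\varphi_\tau$ (together with the purely qualitative asymptotic stability coming from Lyapunov stability and the discrete contraction), so it is the cleaner choice in this generality. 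If you wish to keep the direct estimate, you should flag and justify that uniform bound, or reduce to the discrete-time case by writing $t=k\tau+s$ and absorbing the (single-time) Lipschitz constants of $\varphi_s$, $s\in[0,\tau]$, into $M'$ after establishing their uniform boundedness.

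Your reasoning that the case $n=0$ is forced by the contradiction hypothesis is slightly off: $\lambda\notin n\sigma(G)$ does not preclude $\lambda=0$, since $n\sigma(G)$ lies strictly in the open left half-plane. The correct observation (implicit in the paper's appeal to Lemma~\ref{lemma:pkoop_in_spect}) is that if $\lambda=0$ and $\psi(0)\neq 0$, then $\psi$ is a nonzero constant near $0$ and the flatness argument degenerates; one must note separately that $\psi(0)=0$ is forced, either by requiring it as part of the ``at $0$'' convention or by the observation that $\psi(0)=e^{\lambda t}\psi(0)$ with $\lambda\neq 0$ implies $\psi(0)=0$, and then that $\lambda=0$ with $\psi(0)=0$ leads to $\psi\equiv 0$ by asymptotic stability and continuity. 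This is a minor point that the paper also treats tersely.
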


\begin{proof}
    The proof is the same as in \cite{kvalheim2021existence}, we reproduce it in Appendix~\ref{appendix:B} for the sake of completeness.
\end{proof}

The above uniqueness claim continues to hold if $\varphi$ is merely $C^1_b$ (in space), as shown in \cite{kvalheim2021existence} in the finite-dimensional setting.
Their proof carries over with very minor modifications to Banach spaces, 
hence we do not repeat it here.

\subsection{Unstable case} \label{sect:unstablecase}

As an aside from the majority of the preceding text,
the present section gives a brief outline of a possible route to obtain Koopman eigenfunctions for the case of unstable (hyperbolic) fixed points of semiflows.
We will show the existence of $C^0$ eigenfunctions under fairly mild assumptions, 
which, at least to the best of our knowledge, recovers the degree 
to which
the unstable scenario is developed in the finite dimensional case. 
Of course, in the finite dimensional case, or even that of bounded generators, $C^0$ eigenfunctions are readily provided by a direct application of the Hartman-Grobman theorem; but this does not necessarily extend
to semiflows with unbounded generators
(in cases where it does, e.g., the scalar reaction-diffusion equation \cite{lu1991hartman}, the Hartman-Grobman theorem has been proven via invariant foliations).

Here, we recall a result of \cite{chen1997invariant} on invariant manifolds and foliations for semiflows, 
which reduces the (unbounded) problem to finite dimensions.

\begin{theorem}[Theorem~1.1, \cite{chen1997invariant}] \label{thm:cht} 
Let $X$ be a Banach space admitting a $C^1$ cut-off function, 
and let $\varphi : \mathcal{D}^{\varphi} \to X$ be a jointly $C^0$ local semiflow of the form
\begin{displaymath}
    \varphi_t = L_t + N_t, \qquad t \geq 0,
\end{displaymath}
for $L_t \in \mathcal{L}(X)$ and $N_t : X \to X$ a locally Lipschitz map (on a neighborhood of $0$) satisfying $N_t(0) = 0$. 
Suppose there exists a constant $q > 0$ such that $\sup_{0 \leq t \leq q} \mathrm{Lip}(\varphi_t) < \infty$, and $\tau \in (0,q]$
for which $\varphi_\tau: X \to X$ is $C^1$ and
$X$ splits into closed $L_\tau$-invariant subspaces $X = X_0 \oplus X_1$ such that $L_\tau$ commutes with the associated projections $P_i: X \to X$, $i = 0,1$.
Denoting by $L_{\tau,i}:X_i \to X_i$ the restrictions, assume $L_{\tau,0} \in \mathrm{Aut}(X_0)$, and that there exist constants $\alpha_0 > \alpha_1 \geq 0$ and $C_0,C_1 \geq 1$ such that
\begin{align*}
     \Vert L_{\tau,0}^{-k} \widetilde{P}_0 \Vert &\leq C_0 \alpha_0^{-k}, \\
     \Vert L_{\tau,1}^{k} \widetilde{P}_1 \Vert &\leq C_1 \alpha_1^{k}, \qquad k \geq 0.
\end{align*} 

Then, there exists a locally invariant $C^1$ manifold $W_0$ tangent to $X_0$ at $0$,
and a locally invariant $C^0$ foliation $\{ \mathscr{L}_p \}_{p \in W_0}$ about $0$ (with $C^1$ leaves $\mathscr{L}_p$) tangent to $X_1$ at $0$.

\end{theorem}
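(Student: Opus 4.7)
The plan is to adapt the discrete-time Lyapunov-Perron method in two stages: first construct $W_0$ and the leaves $\{\mathscr{L}_p\}$ for the time-$\tau$ map $F := \varphi_\tau$, then lift invariance to the full semiflow using uniqueness of the fixed-point construction and the semigroup property. To globalize, I would use the hypothesized $C^1$ cut-off function on $X$ to pick $\chi \in C^1(X;\mathbb{R})$ supported in $B_{2\delta}$ and equal to $1$ on $B_\delta$, and replace $N_\tau$ by $\widetilde{N}_\tau := \chi N_\tau$. Identifying $L_\tau$ with the linearization $D\varphi_\tau(0)$---as is forced, modulo absorbing a lower-order correction into $L_\tau$, by the spectral hypotheses on $L_\tau$ together with the $C^1$ regularity of $\varphi_\tau$---one has $D\widetilde{N}_\tau(0) = 0$, and hence $\mathrm{Lip}(\widetilde{N}_\tau)$ can be made arbitrarily small by shrinking $\delta$. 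The modified map $\widetilde{F} := L_\tau + \widetilde{N}_\tau$ is globally $C^1$, agrees with $F$ on $B_\delta$, and retains all the spectral data of $L_\tau$.

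For the manifold, I would work on the weighted backward-orbit space
\[
    \Sigma_0 := \left\{(x_k)_{k \leq 0} \subset X \,:\, \sup_{k\leq 0}\alpha_0^{-k}|x_k| < \infty\right\},
\]
and for each $\xi \in X_0$ seek $(x_k) \in \Sigma_0$ satisfying $x_{k+1} = \widetilde{F}(x_k)$ and $\widetilde{P}_0 x_0 = \xi$. Using the invertibility of $L_{\tau,0}$ together with the two exponential bounds, this rewrites as a fixed-point equation whose Lipschitz constant is $O(\mathrm{Lip}(\widetilde{N}_\tau))$ and can be forced strictly less than $1$. Setting $h(\xi) := \widetilde{P}_1 x_0(\xi)$ yields $W_0 := \mathrm{graph}(h)$ with $h(0) = 0$ and $Dh(0) = 0$; the $C^1$ regularity of $h$ is then obtained by applying the fiber-contraction theorem of Hirsch-Pugh to the formal differential of the fixed-point operator. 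For the foliation I would mirror this with a forward-orbit contraction: for each $p \in W_0$ and prescribed $\eta \in X_1$, solve $y_{k+1} = \widetilde{F}(y_k)$ with $\widetilde{P}_1(y_0 - p) = \eta$ on the space $\Sigma_1(p) := \{(y_k)_{k\geq 0} : \sup_k \alpha_1^{-k}|y_k - \widetilde{F}^k(p)| < \infty\}$. The spectral gap $\alpha_0 > \alpha_1$ is precisely what renders this contraction well-posed, and produces each $\mathscr{L}_p$ as the graph of a $C^1$ map from a neighborhood in $X_1$ into a complement. Continuity (but not differentiability) of $p \mapsto \mathscr{L}_p$ is inherited from Lipschitz dependence of the contraction on its parameter.

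To lift back to the semiflow, I would exploit the commutation $\widetilde{F} \circ \varphi_t = \varphi_t \circ \widetilde{F}$ wherever both sides land in $B_\delta$: for $x \in W_0$ near $0$, the canonical backward orbit $(x_k)_{k \leq 0} \in \Sigma_0$ under $\widetilde{F}$ is transported by $\varphi_t$ to a backward orbit $(\varphi_t(x_k))_{k \leq 0}$ of $\widetilde{F}$ starting at $\varphi_t(x)$, and the uniform bound $\sup_{0\leq s \leq q}\mathrm{Lip}(\varphi_s) < \infty$ keeps it in $\Sigma_0$. Uniqueness of the Lyapunov-Perron fixed point then forces $\varphi_t(x) \in W_0$, giving local $\varphi_t$-invariance; the same argument applied leafwise yields the local invariance of $\{\mathscr{L}_p\}$. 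The main obstacle, as usual in this class of arguments, is the simultaneous calibration of the localization scale $\delta$, the sequence-space weights, and the allowable Lipschitz norm of $\widetilde{N}_\tau$ so that both the backward and forward contractions succeed on compatible neighborhoods, together with the delicate bootstrap---via the fiber-contraction theorem in the Banach setting---needed to upgrade mere continuity of the fixed-point maps to the claimed $C^1$ regularity of $h$ and of each individual leaf.
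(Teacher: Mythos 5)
Your proposal takes a genuinely different route from the paper. The paper's proof of Theorem~\ref{thm:cht} is essentially a one-step citation: it observes that, after multiplying $N_t$ by a $C^1$ cut-off function of sufficiently small support, the hypotheses of Theorem~1.1 of Chen--Hale--Tan \cite{chen1997invariant} are met, applies that theorem as a black box (it is already a semiflow result, so no separate time-$\tau$-map/lifting step is needed), and restores locality from the fact that $\varphi$ and $\varphi^\chi$ agree near $0$. You, by contrast, outline a from-scratch reconstruction of the Chen--Hale--Tan argument via a discrete Lyapunov--Perron scheme: a backward-orbit contraction on a weighted sequence space for $W_0$, a forward-orbit contraction leafwise for the foliation, the Hirsch--Pugh fiber-contraction theorem to upgrade to $C^1$, and a uniqueness/commutation argument to pass from the time-$\tau$ map back to the semiflow. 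This is structurally faithful to the Irwin-type method that the paper itself attributes to \cite{chen1997invariant}, so your route reproves rather than invokes the cited result; it is more work, but it also makes explicit the mechanics that the paper treats as given.

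Two cautions. First, your parenthetical ``identifying $L_\tau$ with $D\varphi_\tau(0)$ --- as is forced, modulo absorbing a lower-order correction'' glosses over a real step: the cut-off only drives $\mathrm{Lip}(\widetilde{N}_\tau)$ to zero as $\delta \to 0$ if $DN_\tau(0) = 0$, i.e., $L_\tau = D\varphi_\tau(0)$, since the Lipschitz constant of $N_\tau$ on any ball about $0$ is bounded below by $\Vert DN_\tau(0) \Vert$. If $L_\tau$ were merely some operator satisfying the dichotomy but not the linearization, one would first have to transfer the splitting and the exponential estimates from $L_\tau$ to $D\varphi_\tau(0)$; this is a separate perturbation argument, not a notational convenience. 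Second, your semiflow-lifting step invokes commutation of $\varphi_t$ with $\widetilde{F}$; this holds only where the cut-off is the identity, whereas the backward orbit $(x_k)_{k\le 0}$ in $\Sigma_0$ need not remain in that ball unless $\alpha_0 > 1$. Since the statement allows $\alpha_0 \le 1$, the lifting needs a more careful localization than the uniform Lipschitz bound $\sup_{0\le s\le q}\mathrm{Lip}(\varphi_s) < \infty$ alone provides. Both issues disappear if one simply cites the semiflow-level theorem directly, as the paper does.
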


\begin{proof}
    This is just a restatement of Theorem~1.1 of \cite{chen1997invariant}.
    The assumptions as we have stated imply that, once a $C^1$ cut-off function $\chi$ (with sufficiently small support) is applied to $N_t$, i.e., $\varphi_t$ is replaced by $\varphi_t^{\chi}(x) = L_t x + \chi(x) N_t(x)$, the hypotheses of the original theorem in \cite{chen1997invariant} are satisfied.
    Since $\varphi$ and $\varphi^\chi$ locally agree, it is clear then that we may draw conclusions locally.
\end{proof}

The conclusions of Theorem~\ref{thm:cht} amount to the existence of a $C^0$ projection map $\pi: O \to W_0$ on some open neighborhood $O$ of the origin in $X$, such that $\mathscr{L}_p \cap O = \pi^{-1}(p)$.
By local invariance of the foliation and $W_0$, $\pi$ satisfies 
\begin{equation}
    \pi \circ \varphi_t(x) = \varphi_t \vert_{W_0} \circ \pi(x), \qquad \text{for } (t,x) \in \mathcal{O},
    \label{eq:unstable_foli}
\end{equation}
with $\mathcal{O}$ as in \eqref{eq:Koopmandomain}.

If $X_0$ and hence $W_0$ can be chosen finite dimensional,
and if hyperbolicity of the fixed point $0$ of $\varphi_t \vert_{W_0}$ is assumed,
we may apply the Hartman-Grobman theorem to deduce the existence of $C^0$ Koopman eigenfunctions associated to the spectrum of $D \varphi_t (0) \vert_{X_0}$ via \eqref{eq:unstable_foli}. 

One instance in which this can always be achieved is if the generator $G$ of $L_t$ is sectorial, has
compact resolvent and its spectrum is disjoint from the imaginary axis.
Then, for any spectral subset of the form
\begin{displaymath}
    \Sigma = \{ \, \lambda \in \sigma(G) \, | \, \mathrm{Re} \, \lambda > c \, \},
\end{displaymath}
for some $c \in \mathbb{R}$ 
dissecting the spectrum with nonzero spectral gap,
the spectral projections $P_\Sigma$ and $(\mathrm{id}_X - P_\Sigma)$ yield an exponential dichotomy of the form required by Theorem~\ref{thm:cht}, with $X_0 := P_\Sigma X$ finite dimensional. 

\section{Example: the Navier-Stokes equations} \label{sect:example}

The results herein were posed with the 
intention that they would be   
applicable to (but not limited to) semilinear parabolic equations in the formulation of \cite{henry1981geometric}.
To give one example of the sort,
we consider the Navier-Stokes system, motivated by its importance in relation to Koopman theory.

Let us consider, then, 
a fluid enclosed within $\Omega \subset \mathbb{R}^d$, $d =2,3$, a bounded domain with piecewise smooth, rigid boundary $ \partial \Omega$.
The fluid obeys the Navier-Stokes equations,
\begin{subequations}
\begin{align}
    \partial_t v - \nu \Delta v + (v \cdot \nabla ) v + \nabla p &= f \quad &\text{in } \mathbb{R}^{\geq 0} \times \Omega, \\
    \nabla \cdot v &= 0 \quad &\text{in } \mathbb{R}^{\geq 0} \times  \Omega, \\
    v &= 0 \quad &\text{on } \mathbb{R}^{\geq 0} \times  \partial \Omega, \\
    v(0, \cdot ) &= \hat{v} \quad &\text{in } \Omega,
\end{align}\label{eq:NSE}\end{subequations}
where $\nu$ is the kinematic viscosity; $v=v(t,x)$ and $p=p(t,x)$ are the sought velocity and pressure fields, respectively; $f=f(x)$ is a time-independent forcing term; and $\hat{v}=\hat{v}(x)$ is an initial condition.
We suppose the existence of a smooth steady-state solution, $(U,P)$,
which will shortly be moved to the origin, and will proceed to serve as the base fixed point to which we 
apply the machinery of sections \ref{sect:statement} and \ref{sect:koopman}.

Consider the space
\begin{equation}
    V = \overline{ \Big\{ \, u \in C^\infty_c (\Omega; \mathbb{R}^d) \; \big\vert \; \mathrm{div} \, u = 0 \, \Big\} }^{H^1(\Omega; \mathbb{R}^d)}
    \label{eq:V}
\end{equation}
with norm
\begin{displaymath}
    \Vert u \Vert_V^2 = \int_\Omega |\nabla u (x)|^2 dx,
\end{displaymath} 
and define the operators
\begin{equation}
    A_0 u = \nu P_L \Delta u, \qquad u \in \mathrm{dom}(A_0),
    \label{eq:AU}
\end{equation} 
with $\mathrm{dom}(A_0) = V \cap H^3(\Omega; \mathbb{R}^d)$ and $P_L$ denoting the Leray projection from $H^1(\Omega; \mathbb{R}^d)$ onto $V$; and
\begin{equation}
    B (u,w) = P_L[(u \cdot \nabla)w], \qquad u,w \in \mathrm{dom}(A_0).
    \label{eq:B}
\end{equation} 

It is well known that, under the present hypotheses, 
\eqref{eq:NSE} may be equivalently written as an evolution equation \cite{Fujita1964,temam2001navier,Temam1995NSEandNFA,foias2001}, 
\begin{subequations}
  \begin{gather}
    \frac{d}{dt} u = A_U u - B(u,u), \\
    u(0) = \hat{u},
\end{gather}\label{eq:NSE_evolution}\end{subequations}
which we shall consider on $V$.
Here, $u(t) = v(t, \cdot) - U$ is the perturbation velocity field (and similarly, $\hat{u} = \hat{v} - U$ is the corresponding initial condition); and $A_U : \mathrm{dom}(A_U) = \mathrm{dom}(A_0) \to V$,
\begin{displaymath}
    A_U = A_0 - B(U, \cdot) - B( \cdot, U),
\end{displaymath}
is a sectorial operator with compact resolvent \cite{yudovich1989}.

It is also well known that \eqref{eq:NSE_evolution} generates a local semiflow on $V$ (which extends to define a global semiflow if $d = 2$), $\varphi : \mathcal{D}^\varphi \to V$ that is separately continuous on its domain and jointly $C^{\infty}$ over $\mathcal{D}^{\varphi} \cap (\mathbb{R}^{> 0} \times V)$ \cite{WEISSLER1979,Weissler1980TheNI,marsden1976hopf,robinson2001,buza2023spectral}. 
Moreover, the joint continuity of $(t,v_0) \mapsto D^i (\varphi_t  \vert_{V_0}) (v_0)$, $i \geq 0$, extends to $(t,v_0) = (0,0)$, for any $V_0 \subset \mathrm{dom}(A_U)$.
 While fairly obvious, the latter claim  gets a little technical, its proof is hence deferred to Appendix~\ref{appendix:A}.

We remark that
spectral stability, i.e., $\sigma(A_U) \subset \{ \mu \in \mathbb{C} \; | \; \mathrm{Re} \, \mu < 0 \}$,
implies Lyapunov stability in the norm topology of $V$ (see Theorem 2.3 of Chapter 2 and the discussion on pages 113-114 of \cite{yudovich1989}).
With Lyapunov stability in hand, we may construct a neighborhood $O \subset V$ of $0$ on which $\varphi_t$ is $C^\infty_b$ for each $t \geq 0$ (see Appendix~\ref{appendix:A}).

We have therefore established all the requisites of Theorem~\ref{thm:sf} for splittings of the form $V_0 \oplus V_1$, such that $V_0 \subset \mathrm{dom}(A_U)$, and with respect to which $A_U$ satisfies the nonresonance conditions \ref{hypA3b}-\ref{hypA4b}.
Splittings as such can be readily supplied via spectral projections $P_\Sigma$ 
according to nonresonant spectral subsets $\Sigma \subset \sigma(A_U)$; the existence of a $C^\infty$ invariant foliation about $0$ tangent to $\ker ( P_\Sigma  )$ ensues (with uniqueness in a sufficiently smooth subclass).
Note that the reduced dynamics produced by Theorem~\ref{thm:sf} will take the form of a jointly $C^\infty$ flow $\vartheta$, since $\mathrm{im}(P_\Sigma)$ is finite dimensional.

Similarly,
Corollary~\ref{cor:Koop_exist} guarantees the
existence of a principal Koopman eigenfunction at $0$ for every $\lambda \in \sigma(A_U)$ satisfying the nonresonance conditions \eqref{eq:nonres_koopman_2} 
(\eqref{eq:nonres_koopman_1} can obviously be satisfied by choosing $\ell$ large enough). 
If $\lambda$ is algebraically simple, then the obtained eigenfunction is
unique in the appropriate sense.

An important subcase in which the nonresonance conditions are automatically satisfied is that of the least stable mode. 
Otherwise, the nonresonance conditions have to be checked numerically on an example-by-example basis, 
for which there exist rigorous methods providing error bounds on the approximation 
in the case of operators on bounded domains \cite{boffi2000problem} (the case of unbounded domains gets much more finicky, see e.g.\ \cite{colbrook2022foundations}).

If the steady state $(U,P)$ is unstable, none of the preceding theory applies except for that of Section~\ref{sect:unstablecase}, but with its use, we do obtain $C^0$ Koopman eigenfunctions for all eigenvalues without the need to satisfy nonresonance conditions.

\section{Proof of the main results}
\label{sect:proof}

In this section, we prove Theorems \ref{thm:maps} and \ref{thm:sf}.
We begin by recalling some 
procedural standards forming the core of the proof, following \cite{Cabre2003a}.

\begin{remark}[Adapted norms] \label{remark:adapt}

    For any $\varepsilon > 0$,
    It is possible to construct a new norm $| \cdot |_{\mathrm{a}}$ on $X$, equivalent to the original one, according to which 
    \begin{align*} 
        &\Vert A \Vert_{\mathcal{L}(X)} \leq \rho_A + \varepsilon,
        & &\Vert A_0^{-1} \Vert_{\mathcal{L}(X_0)} \leq \rho_{A_0^{-1}} + \varepsilon, 
    \end{align*} 
    hold, where $\rho_A = \sup \{ |z| \, | \, z \in \sigma(A) \}$ denotes the spectral radius. 
    For a proof of this claim, see \cite{Cabre2003a}, Proposition A.1.

    In such a norm, \ref{hyp3:ell} implies
    \begin{equation} 
        \Vert A_0^{-1} \Vert_{\mathcal{L}(X_0)} \Vert A \Vert_{\mathcal{L}(X)}^{\ell + 1} < 1,
        \label{eq:hyp3_adapt}
    \end{equation}
    which is the form \ref{hyp3:ell} will be used in later on.  

    We henceforth assume without further note that 
    the norm adaptation
    has been carried out
    and dispose of the subscript '${\mathrm{a}}$'.
\end{remark}

\begin{remark}
    Rather than considering a small neighborhood $U$ as in the statement of Theorem~\ref{thm:maps}, we instead assume in the following sequence of lemmas that $\Vert \phi \Vert_{C^r_b(B_1^{X};X)}$ is sufficiently small.
    This is an equivalent approach and is common practice in the literature, but we nonetheless address this equivalence towards the end of the proof. 
\end{remark}

Note also that with $\Vert \phi \Vert_{C^r_b(B_1^{X};X)}$ small, we have by Remark~\ref{remark:adapt} that $f$ leaves the unit ball invariant, i.e.\ $f(B_1^X) \subset B_1^X$, which will be assumed throughout.

\begin{remark} \label{remark:linearpart}
    From \eqref{eq:map_foliation_diag}, we necessarily have that $g(0) = 0$.
At linear order, \eqref{eq:map_foliation_diag} reads
\begin{displaymath}
    Dg(0) D\pi(0) = D \pi(0) A, 
\end{displaymath}
which implies that $A$ leaves $\ker ( D\pi (0) )$ invariant.
On account of the assumed form of $A$ and  requirement \eqref{eq:Dpi_Aut}, we must have $\ker ( D\pi (0) ) = X_1$. 
Moreover,
\begin{align*}
    Dg(0) &= D \pi(0)  A \vert_{X_0}  ( D\pi(0) \vert_{X_0} )^{-1} \\
        &= D\pi(0) \vert_{X_0} A_0 ( D\pi(0) \vert_{X_0} )^{-1}.
\end{align*}
Throughout the existence part of the proof, we proceed with the choice $D\pi(0) \vert_{X_0} = \mathrm{id}_{X_0}$ (and hence $Dg(0) = A_0$), but note that Remark~\ref{remark:diffeo} will then give a solution with $D\pi(0) \vert_{X_0} = C$ for any choice of $C \in \mathrm{Aut}(X_0)$ while retaining the polynomial order of $g$. 
\end{remark}

We seek the foliation map $\pi$ in \ref{eq:map_foliation_diag} as 
\begin{equation}
    \pi = \pi^\leq + \pi^>,
    \label{eq:pi_decomp}
\end{equation}
for $\pi^\leq$ a lower order polynomial expression and $\pi^>$ the remainder.
To deal with the polynomial part $\pi^\leq$, we recall some terminology about multilinear maps.

Let $X_1,\ldots , X_n$, $Y$ and $Z$ be Banach spaces.  
By convention, we endow product spaces with the norm $|(x_1,\ldots,x_n)|_{X_1 \times \ldots \times X_n} = \sup \{|x_1|_{X_1},\ldots,|x_n|_{X_n}\}$.
We denote by $\mathrm{M}(X_1,\ldots,X_n;Y)$ the Banach space of bounded multilinear maps 
\begin{displaymath}
    X_1 \times \ldots \times X_n \longrightarrow Y
\end{displaymath}
equipped with the norm 
\begin{displaymath}
    \Vert N \Vert_{\mathrm{M}(X_1,\ldots,X_n;Y)} = \sup \big\{ | N(x_1, \ldots, x_n) |_Y \; \big| \; |(x_1, \ldots, x_n)|_{X_1 \times \ldots \times X_n} < 1 \big\}.
\end{displaymath}

For $1 \leq k \leq n$ and bounded linear maps $L \in \mathcal{L}(Z,X_k)$ and $K \in \mathcal{L}(Y,Z)$, consider the precomposition operator $$r^k_L: \mathrm{M}(X_1,\ldots,X_k,\ldots,X_n;Y) \to  \mathrm{M}(X_1,\ldots,Z,\ldots,X_n;Y)$$ given by
\begin{equation}
    r^k_L N(x_1,\ldots,z,\ldots,x_n) = N(x_1,\ldots,Lz,\ldots,x_n).
    \label{eq:r_k}
\end{equation}
along with $l_K : \mathrm{M}(X_1,\ldots,X_n;Y) \to  \mathrm{M}(X_1,\ldots,X_n;Z)$ defined as
\begin{equation}
    l_K N (x_1,,\ldots, x_n) = K N (x_1,\ldots, x_n).
    \label{eq:l}
\end{equation}
Both maps $L \mapsto r^k_L$ and $K \mapsto l_K$ are linear.
Moreover, for $K' \in \mathcal{L}(Z,Z')$ and $L'\in \mathcal{L}(Z',Z)$, we have that $l_{K'} l_{K} = l_{K'K}$ and $ r^k_{L'} r^k_L= r^k_{L L'}$.
The inequality
\begin{equation}
   \Vert r^k_L N \Vert_{\mathrm{M}(X_1,\ldots,Z,\ldots,X_n;Y)} \leq \Vert N \Vert_{\mathrm{M}(X_1,\ldots,X_k,\ldots,X_n;Y)} \Vert L \Vert_{\mathcal{L}(Z,X_k)}
   \label{eq:r_cont}
\end{equation}
shows that both $r^k_L$ and $L \mapsto r^k_L$ are bounded and hence continuous by linearity; similarly for $l_K$ and $K \mapsto l_K$. 

When $K \in \mathcal{L}(Y)$, 
the above properties imply
that $$K \mapsto l_K : \mathcal{L}(Y) \to\mathcal{L}( \mathrm{M}(X_1,\ldots X_n;Y))$$ is a Banach algebra homomorphism with $l_{\mathrm{id}_{Y}} = \mathrm{id}_{\mathrm{M}(X_1,\ldots X_n;Y)}$; similarly $r^k$ is an anti-homomorphism from $\mathcal{L}(X_k)$ to $\mathcal{L}( \mathrm{M}(X_1,\ldots X_n;Y))$.
The spectra of $l_K$ and $r^k_L$ hence satisfy
\begin{subequations}
\begin{align}
    &\sigma (l_K; \mathcal{L}(\mathrm{M}(X_1,\ldots X_n;Y))) \subset \sigma (K;\mathcal{L}(Y)), \\
    &\sigma (r^k_L; \mathcal{L}(\mathrm{M}(X_1,\ldots X_n;Y)) ) \subset \sigma (L; \mathcal{L}(X_k)). 
\end{align}\label{eq:spectrum_lk_rk}
\end{subequations}

We use the shorthand notation $\mathrm{M}_n(X;Y)$ for the space $\mathrm{M}(X,\ldots,X;Y)$, where $X$ appears $n \geq 0$ times, with the convention that $\mathrm{M}_0(X;Y) = Y$.
Given two closed linear subspaces $X_0,X_1 \subset X$ such that $X = X_0 \oplus X_1$ and $\alpha \in \{0,1\}^n$, we denote by $\mathrm{M}_\alpha(X;Y)$ the space $\mathrm{M}(X_{\alpha(1)},\ldots,X_{\alpha(n)};Y)$.
With $\alpha$ as above and $L$ a bounded linear map,
let
\begin{displaymath}
    r_L^{\alpha} := (r^1_L)^{\alpha(1)} \cdots (r^n_L)^{\alpha(n)},
\end{displaymath}
with the convention that $(r^k_L)^0 = \mathrm{id}$. 
Denote further $\overline{\alpha}: = (1,\ldots,1) - \alpha$, and $r_L := r_L^{(1,\ldots,1)}$.

With these conventions, we have 
\begin{equation}
    r^\alpha_{L + L'} = \sum_{\substack{\beta,\gamma \in \{0,1\}^n \\ \beta + \gamma = \alpha}} r^\gamma_L r^\beta_{L'},
    \label{eq:alpha_beta_gamma_sum}
\end{equation}
which will be used repeatedly in what follows.

\begin{lemma} 
    Let $X$ be as in \ref{hyp1:X}.
    Let $Y$ be another Banach space and let $n \geq 1$.
    Then, the map 
    \begin{align}
        \eta  : \: \mathrm{M}_n(X;Y) &\longrightarrow \prod_{\alpha \in \{ 0,1\}^n} \mathrm{M}_\alpha(X;Y) \nonumber \\
        N  &\mapsto (r_{\imath_0}^{\overline{\alpha}} r_{\imath_1}^{\alpha}N)_{\alpha \in \{ 0,1\}^n}
        \label{eq:eta}
    \end{align}
    is an isomorphism of Banach spaces. 
\end{lemma}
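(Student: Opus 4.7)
The plan is to exhibit an explicit two-sided inverse for $\eta$ and check boundedness on both sides.

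The key observation is that, for $N \in \mathrm{M}_n(X;Y)$, the component $\eta(N)_\alpha = r_{\imath_0}^{\overline{\alpha}} r_{\imath_1}^{\alpha} N$ is nothing other than the restriction of $N$ to $X_{\alpha(1)} \times \cdots \times X_{\alpha(n)}$, since at each slot $k$ either $\alpha(k) = 1$ (in which case $r^k_{\imath_1}$ is applied, precomposing slot $k$ with $\imath_1$) or $\overline{\alpha}(k) = 1$ (in which case $r^k_{\imath_0}$ is applied). The operators at distinct slots commute, so the ordering in the definition of $r_L^\alpha$ is immaterial. Motivated by the fact that every $x \in X$ decomposes as $x = \imath_0 \widetilde{P}_0 x + \imath_1 \widetilde{P}_1 x$, I propose the candidate inverse
\begin{equation*}
    \eta^{-1}\bigl((N_\alpha)_{\alpha \in \{0,1\}^n}\bigr) \;=\; \sum_{\alpha \in \{0,1\}^n} r^{\overline{\alpha}}_{\widetilde{P}_0} r^{\alpha}_{\widetilde{P}_1} N_\alpha.
\end{equation*}

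The first step is then to verify the two composition identities. For $\eta^{-1} \circ \eta = \mathrm{id}$, evaluate the sum at $(x_1,\ldots,x_n) \in X^n$: each summand produces $N\bigl(\imath_{\alpha(1)} \widetilde{P}_{\alpha(1)} x_1,\ldots,\imath_{\alpha(n)} \widetilde{P}_{\alpha(n)} x_n\bigr)$, and expanding $N(x_1,\ldots,x_n)$ by multilinearity over the splittings $x_k = \imath_0 \widetilde{P}_0 x_k + \imath_1 \widetilde{P}_1 x_k$ yields precisely this sum. For $\eta \circ \eta^{-1} = \mathrm{id}$, a slot-by-slot computation using the Banach-algebra anti-homomorphism property of each $r^k$ reduces the $(\alpha,\beta)$-cross term to $r^{(1,\ldots,1)}_{K_1 \otimes \cdots \otimes K_n} N_\beta$ with $K_k = \widetilde{P}_{\beta(k)} \imath_{\alpha(k)} = \delta_{\alpha(k)\beta(k)} \mathrm{id}_{X_{\alpha(k)}}$; hence only the diagonal term $\alpha = \beta$ survives, returning $N_\alpha$.

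Linearity of both $\eta$ and $\eta^{-1}$ is immediate, since each $r^k_L$ depends linearly on its argument. Boundedness of $\eta$ follows from iterating inequality \eqref{eq:r_cont}, which gives $\Vert \eta(N)_\alpha \Vert \leq \Vert N \Vert \Vert \imath_0 \Vert^{|\overline{\alpha}|} \Vert \imath_1 \Vert^{|\alpha|}$, uniformly in $\alpha$; and boundedness of $\eta^{-1}$ follows similarly as a finite sum of $2^n$ compositions, each estimated by \eqref{eq:r_cont} with the projection norms $\Vert \widetilde{P}_i \Vert$. Since the claim is purely algebraic-topological and no spectral, regularity, or dynamical hypothesis intervenes, I do not expect a genuine obstacle; the only mild care required is the bookkeeping of commuting the $r^k$ operators across slots, which is rendered harmless by the observation that $r^j_L$ and $r^k_{L'}$ commute for $j \neq k$.
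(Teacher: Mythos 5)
Your proposal is correct and follows essentially the same route as the paper: same candidate inverse $\xi = \sum_\alpha r^{\overline{\alpha}}_{\widetilde{P}_0} r^{\alpha}_{\widetilde{P}_1}$, same verification of the two composites (the paper proves $\xi\circ\eta=\mathrm{id}$ algebraically via $\imath_i \widetilde{P}_i = P_i$ and the binomial identity \eqref{eq:alpha_beta_gamma_sum}, whereas you evaluate at a point and expand by multilinearity, but these are the same computation), and same Kronecker-delta collapse $\widetilde{P}_{\beta(k)} \imath_{\alpha(k)} = \delta_{\alpha(k)\beta(k)}\,\mathrm{id}$ for $\eta\circ\xi=\mathrm{id}$. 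The only cosmetic slip is the notation $r^{(1,\ldots,1)}_{K_1\otimes\cdots\otimes K_n}$, which is not defined in the paper (the $r^\alpha$ operators take a single $L$); what you mean is the product $r^1_{K_1}\cdots r^n_{K_n}$, which is clear from context.
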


\begin{proof}
    Linearity of $\eta$ is clear from its definition.
    Inclusion maps from the subspace topology are continuous -- it follows from \eqref{eq:r_cont} that so is $r^k_{\imath_j}$
    and hence \eqref{eq:eta} is well-defined in that it has image within the space of bounded maps. 
    Since $r_{\imath_0}^{\overline{\alpha}} r_{\imath_1}^{\alpha}$ is bounded
    for any $\alpha \in \{ 0,1\}^n$, we have that $\eta$ is bounded.

    We now construct an inverse. 
    Since the $X_i$ are closed, the projections  $\widetilde{P}_i : X \to X_i$ with restricted range are continuous. 
    It follows via \eqref{eq:r_cont} that,
    for $(N_\alpha)_{\alpha \in \{ 0,1 \}^n} \in \prod_{\alpha \in \{ 0,1\}^n} \mathrm{M}_\alpha(X;Y)$, 
    the map
    \begin{equation}
        \xi: (N_{\alpha})_{\alpha \in \{ 0,1\}^n} \mapsto \sum_{\alpha \in \{0,1 \}^n} r^{\overline{\alpha}}_{\widetilde{P}_0} r^{\alpha}_{\widetilde{P}_1} N_\alpha.
        \label{eq:xi}
    \end{equation} 
    takes values in $\mathrm{M}_n(X;Y)$.

    We show that $\xi \circ \eta = \mathrm{id}_{\mathrm{M}_n(X;Y)}$.
    Indeed, 
    \begin{align*}
        \xi \circ \eta &= \sum_{\alpha \in \{ 0,1\}^n} r^{\overline{\alpha}}_{\widetilde{P}_0} r^{\alpha}_{\widetilde{P}_1} r_{\imath_0}^{\overline{\alpha}} r_{\imath_1}^{\alpha}  \\
        &= \sum_{\alpha \in \{ 0,1\}^n} r_{\imath_0 \widetilde{P}_0}^{\overline{\alpha}} r_{\imath_1 \widetilde{P}_1}^{\alpha} \\
        &= \sum_{\alpha \in \{ 0,1\}^n} r_{P_0}^{\overline{\alpha}} r_{P_1}^{\alpha} \\
        &= (r^1_{P_0+P_1}) \cdots (r^n_{P_0+P_1} ) \\
        &= \mathrm{id}_{\mathrm{M}_n(X;Y)},
    \end{align*}
    where the penultimate equality used \eqref{eq:alpha_beta_gamma_sum}.
    Conversely,
    \begin{align*}
        \eta \circ \xi (N_{\alpha})_{\alpha \in \{ 0,1\}^n} &= \left( r_{\imath_0}^{\overline{\beta}} r_{\imath_1}^{\beta} \sum_{\alpha \in \{0,1 \}^n} r^{\overline{\alpha}}_{\widetilde{P}_0} r^{\alpha}_{\widetilde{P}_1} N_\alpha \right)_{\beta \in \{0,1\}^n} \\ 
        &= \left( r^{\overline{\beta}}_{\widetilde{P}_0 \imath_0} r^{\beta}_{\widetilde{P}_1 \imath_1} N_\beta  \right)_{\beta \in \{0,1\}^n} \\
        &= \left( r^{\overline{\beta}}_{\mathrm{id}_{X_0}} r^{\beta}_{\mathrm{id}_{X_1}} N_\beta  \right)_{\beta \in \{0,1\}^n} \\
        &= \left(  N_\beta  \right)_{\beta \in \{0,1\}^n}.
    \end{align*}
    Here, the second equality follows from $\widetilde{P}_j \imath_k =0$ whenever $j \neq k$.  
    Finally, that $\xi$ is bounded can be seen either directly from \eqref{eq:xi} or the open mapping theorem. 
\end{proof}

Next, we recall a classical result about spectra,
which will be of use in the lemma to follow.

\begin{theorem}[Theorem~11.23, \cite{rudin1991functional}] \label{thm:spectrum_BA_comm}
Let $a$ and $b$ be two commuting elements of a Banach algebra $\mathcal{A}$. 
Then
\begin{displaymath}
    \sigma(ab;\mathcal{A}) \subset \sigma (a;\mathcal{A}) \sigma (b;\mathcal{A}).
\end{displaymath}
\end{theorem}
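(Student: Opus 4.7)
The plan is to reduce the problem to the commutative setting via a maximal commutative subalgebra, then invoke Gelfand theory. First, I would use Zorn's lemma to construct a maximal commutative subalgebra $\mathcal{B} \subset \mathcal{A}$ containing the identity together with $a$ and $b$; this is where the commutativity hypothesis on $a,b$ enters, as it ensures the collection of commutative subalgebras containing both is nonempty. Note that $\mathcal{B}$ is automatically closed, since the closure of a commutative subalgebra is again commutative, so maximality forces $\mathcal{B} = \overline{\mathcal{B}}$.

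The key technical step is to show that, for any $x \in \mathcal{B}$, the spectra agree: $\sigma(x;\mathcal{A}) = \sigma(x;\mathcal{B})$. The inclusion $\sigma(x;\mathcal{A}) \subset \sigma(x;\mathcal{B})$ is immediate from the fact that invertibility in $\mathcal{B}$ implies invertibility in $\mathcal{A}$. For the reverse, suppose $x \in \mathcal{B}$ has an inverse $y \in \mathcal{A}$; then for any $z \in \mathcal{B}$, commutativity $xz = zx$ multiplied on both sides by $y$ yields $yz = zy$, so $y$ commutes with all of $\mathcal{B}$. By maximality of $\mathcal{B}$, this forces $y \in \mathcal{B}$, whence $x$ is invertible in $\mathcal{B}$.

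Next I would apply the Gelfand representation to the commutative unital Banach algebra $\mathcal{B}$: the character space $\Delta(\mathcal{B})$ (nonzero multiplicative linear functionals, which are automatically continuous of norm $\leq 1$) is nonempty, and
\begin{displaymath}
    \sigma(x;\mathcal{B}) = \{ \chi(x) \; | \; \chi \in \Delta(\mathcal{B}) \}, \qquad x \in \mathcal{B}.
\end{displaymath}
This identification follows from the Gelfand--Mazur theorem via the standard bijection between maximal ideals of $\mathcal{B}$ and characters, combined with the fact that $x - \lambda$ is non-invertible in $\mathcal{B}$ iff it lies in some maximal ideal. Granted this, for any $\lambda \in \sigma(ab;\mathcal{A}) = \sigma(ab;\mathcal{B})$ we pick $\chi \in \Delta(\mathcal{B})$ with $\lambda = \chi(ab)$, and then multiplicativity gives $\lambda = \chi(a)\chi(b)$ with $\chi(a) \in \sigma(a;\mathcal{B}) = \sigma(a;\mathcal{A})$ and $\chi(b) \in \sigma(b;\mathcal{A})$, which yields the desired containment.

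The main obstacle is really the Gelfand representation itself, which packages several nontrivial ingredients (Gelfand--Mazur, existence of maximal ideals via Zorn, continuity of characters); these are, however, entirely standard. The passage to a maximal commutative subalgebra is the conceptual step that makes the commutativity hypothesis on $a$ and $b$ do its work, and the spectrum-invariance argument above is the pivot that allows one to transfer the conclusion back to $\mathcal{A}$.
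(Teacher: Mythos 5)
The paper does not prove this statement; it simply cites it as Theorem~11.23 of Rudin's \emph{Functional Analysis}. Your argument is correct and is in fact essentially Rudin's own proof of that theorem: pass to a maximal commutative (automatically closed, unital) subalgebra $\mathcal{B}$ containing $a$ and $b$, use maximality to show $\sigma(x;\mathcal{A}) = \sigma(x;\mathcal{B})$ for every $x \in \mathcal{B}$, and then apply the Gelfand representation of $\mathcal{B}$ together with multiplicativity of characters to deduce $\sigma(ab;\mathcal{B}) \subset \sigma(a;\mathcal{B})\,\sigma(b;\mathcal{B})$.
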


Note that $l_{L}$, $r^k_{L'}$ and $r^j_{L''}$ all commute with each other if $j \neq k$,
and so Theorem~\ref{thm:spectrum_BA_comm} will be applicable to operators of the form $l_L r^{\overline{\alpha}}_{L'} r^\alpha_{L''}$.

\begin{lemma} \label{lemma:approx} 

Assume hypotheses \ref{hyp1:X}, \ref{hyp2:f}, \ref{hyp4:spec1} and let $\ell \leq r$ be an integer.

Then, we can find 
polynomials 
$\pi^{\leq} = \sum_{n = 1}^\ell \pi_n$
and
$g = \sum_{n = 1}^\ell g_n$,
where $\pi_n$ and $g_n$ are homogeneous 
polynomials of degree $n$,
such that
\begin{equation}
    g \circ \pi^\leq(x) = \pi^\leq \circ f (x) + o(|x|^\ell), \qquad x \in X
    \label{eq:approx_lemma}
\end{equation}
as $x \to 0$, with 
 $\pi_1 = \widetilde{P}_0$
and $g_1 = A_0$. 

If \ref{hyp5:spec2} holds for some $m \leq \ell$, then the $g_n$ with $n \geq m$ can be chosen $0$.

Moreover, the higher order terms can be estimated by $\phi$, the nonlinear part of $f$,
\begin{subequations}\label{eq:gi_pi_bound}
\begin{align}
    & \left\Vert g_n \right\Vert_{C^r_b(B_\rho^{X_0};X_0)} \leq C \left\Vert \phi \right\Vert_{C^r_b(B_1^{X};X)},  &2 \leq n \leq \ell, \label{eq:gi_bound} \\
    &\left\Vert \pi_n \right\Vert_{C^r_b(B_\rho^{X};X_0)} \leq C \left\Vert \phi \right\Vert_{C^r_b(B_1^{X};X)},  &2 \leq n \leq \ell, \label{eq:pi_bound}
\end{align}\end{subequations}
on balls of some finite radius $\rho > 0$.

If $X$  in \ref{hyp1:X} is real, then so are the resulting polynomials $g$ and $\pi^\leq$.

\end{lemma}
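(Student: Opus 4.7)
The plan is to match Taylor coefficients in the functional equation $g\circ\pi^{\leq}(x) = \pi^{\leq}\circ f(x) + o(|x|^{\ell})$ order by order. At order $1$, the equation reduces to $A_0 \widetilde{P}_0 = \widetilde{P}_0 A$, which is automatic from the block form \eqref{eq:A}; so $\pi_1 = \widetilde{P}_0$ and $g_1 = A_0$. I would then induct on $n \in \{2,\ldots,\ell\}$: assuming $(\pi_j, g_j)_{j < n}$ have been constructed, isolating the order-$n$ part of both sides yields a homological equation on $\mathrm{M}_n(X;X_0)$ of the form
\[
    l_{A_0}\pi_n - \pi_n\circ A^{\otimes n} + g_n\circ\widetilde{P}_0^{\otimes n} = R_n,
\]
where $R_n$ is an explicit multilinear map built from the homogeneous pieces $\phi_2, \ldots, \phi_n$ of $\phi$ and from $(\pi_j, g_j)_{2 \leq j \leq n-1}$, all of which are known at this stage.

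The decisive step is to decompose $\pi_n$ and $R_n$ via the isomorphism $\eta$ of \eqref{eq:eta} into pieces $(\pi_n^{\alpha})_{\alpha \in \{0,1\}^n}$ and $(R_n^{\alpha})_{\alpha}$. The $g_n$-term contributes only to the $\alpha = 0$ slot because $\widetilde{P}_0 \imath_1 = 0$. The term $\pi_n\circ A^{\otimes n}$, once expanded using $A\imath_1 = \imath_1 A_1$ and $A\imath_0 = \imath_0 A_0 + \imath_1 B$, contributes to the $\alpha$-slot a \emph{diagonal} piece $r^{\overline{\alpha}}_{A_0}r^{\alpha}_{A_1}\pi_n^{\alpha}$ together with terms that couple $\pi_n^{\alpha}$ only to $\pi_n^{\gamma}$ with $\gamma > \alpha$ (strictly more $1$'s, arising from the shifts caused by $B$). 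This triangular structure makes it possible to solve for $\pi_n^{\alpha}$ in decreasing order of $|\alpha|$, starting from $\alpha = (1,\ldots,1)$.

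At each $\alpha$ with $|\alpha| = j \geq 1$, the operator to invert is
\[
    l_{A_0} - r^{\overline{\alpha}}_{A_0}r^{\alpha}_{A_1} = l_{A_0}\bigl(\mathrm{id} - l_{A_0^{-1}}r^{\overline{\alpha}}_{A_0}r^{\alpha}_{A_1}\bigr),
\]
whose bracketed factor is a product of pairwise commuting elements of $\mathcal{L}(\mathrm{M}_{\alpha}(X;X_0))$. By \eqref{eq:spectrum_lk_rk} together with Theorem~\ref{thm:spectrum_BA_comm} applied iteratively, its spectrum lies in $\sigma(A_0^{-1})\sigma(A_0)^{n-j}\sigma(A_1)^{j}$, which misses $1$ under \ref{hyp4:spec1}; invertibility ensues. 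For $\alpha = 0$, one may simply set $\pi_n^{0} = 0$ and absorb the remaining residual into $g_n$, yielding a polynomial $g$ of degree $\ell$. If instead \ref{hyp5:spec2} holds for some $m \leq \ell$, then for $n \geq m$ the operator $l_{A_0} - r_{A_0}$ on $\mathrm{M}_n(X_0;X_0)$ is itself invertible (again by Theorem~\ref{thm:spectrum_BA_comm} and $\sigma(A_0)^n \cap \sigma(A_0) = \emptyset$), so the choice $g_n = 0$ is legitimate.

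The bounds \eqref{eq:gi_pi_bound} propagate through the induction: $R_n$ is linear in $\phi_n$ and polynomial in the already-controlled $(\pi_j, g_j)_{j < n}$, while the inverses just established are bounded with constants depending only on $A$. Reality, when $X$ is real, is inherited because every operator used (including the inverses) preserves the real structure. The main difficulty is notational rather than analytical: the combinatorics of the $\eta$-decomposition and of the induction on $|\alpha|$ must be arranged so that \ref{hyp4:spec1} arises in exactly the form stated. All the necessary scaffolding --- the algebra of $l_K$, $r^k_L$, the isomorphism $\eta$, and Theorem~\ref{thm:spectrum_BA_comm} --- has been deliberately assembled in the pages preceding the lemma precisely to make this bookkeeping routine.
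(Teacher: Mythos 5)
Your proposal follows the paper's proof essentially step for step: the same homological equation at order $n$, the same $\eta$-decomposition into $\alpha$-slots, the same observation that $\widetilde{P}_0\imath_1=0$ confines $g_n$ to the $\alpha=(0,\ldots,0)$ slot, the same triangular structure from the $B$-induced shifts leading to a descending induction on $|\alpha|$, the same inversion via Theorem~\ref{thm:spectrum_BA_comm} and \eqref{eq:spectrum_lk_rk}, and the same handling of the $\alpha=0$ slot under \ref{hyp4:spec1} versus \ref{hyp5:spec2}. The only details you leave implicit are the symmetry of the constructed $\pi_n$ (which the paper secures by fixing $g_n$ symmetric and invoking uniqueness together with the permutation-equivariance of $l_{A_0}-r_A$) and, in the real case, the complexification device used to justify that $l_K$, $r_L^\alpha$, and their inverses preserve the real subspace.
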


\begin{proof} 
We obtain equations for each polynomial order $1 \leq n \leq \ell$ by differentiating  
\begin{displaymath}
    g \circ \pi = \pi \circ f
\end{displaymath}
$n$ times and equating at $0$.
Once a solution is found at each order, Taylor's theorem yields \eqref{eq:approx_lemma}.
Order $n = 1$ was discussed in Remark~\ref{remark:linearpart},
taking $\pi_1 = \widetilde{P}_0$, $g_1 = A_0$ as in the statement is a suitable choice. 

We proceed inductively and assume that $\pi^{\leq}$ and $g$ have been found up to order $n-1$. 
At order $n$, the above procedure yields
\begin{equation}
   g_n  (\pi_1 [\, \cdot \,],\ldots, \pi_1 [\, \cdot \,]) + g_1 \pi_n - \pi_n (A [\, \cdot \,],\ldots,A [\, \cdot \,]) = \Gamma_n,
   \label{eq:order_n}
\end{equation}
where $\Gamma_n$ is composed of lower order terms: 
\begin{multline}
    \Gamma_n = \sum_{i=2}^{n-1} \sum_{\substack{j \in \mathbb{N}^i \\ |j| = n}} C_{j} \Big( \pi_i (D^{j_1}f(0) [\, \cdot \,],\ldots,D^{j_i}f(0) [\, \cdot \,])-g_i(\pi_{j_1} [\, \cdot \,],\ldots,\pi_{j_i} [\, \cdot \,]) \Big) \\ + \pi_1 D^n f(0)
    \label{eq:Gamma_n}
\end{multline}
for some coefficients $C_{j}$ resulting from Faà di Bruno's formula.
In particular, since the lower order terms $\pi_i$ and $g_i$ are symmetric by the inductive hypothesis, so is $\Gamma_n$.
We need to show that \eqref{eq:order_n} is solvable with $\pi_n$ and $g_n$ symmetric.

Applying the isomorphism $\eta$ from \eqref{eq:eta} to \eqref{eq:order_n}, we obtain $2^n$ equations, one for each choice of $\alpha \in \{0,1\}^n$:  
\begin{equation}
   r_{\imath_0}^{\overline{\alpha}} r_{\imath_1}^{\alpha} r_{\widetilde{P}_0} g_n + l_{A_0} \pi_\alpha - r_{\imath_0}^{\overline{\alpha}} r_{\imath_1}^{\alpha} r_{A} \pi_n = \Gamma_\alpha.
   \label{eq:order_n_projected}
\end{equation}
Here, $\Gamma_\alpha$ stands for $r_{\imath_0}^{\overline{\alpha}} r_{\imath_1}^{\alpha} \Gamma_n$, and similarly for $\pi_\alpha$.
Using $A \imath_0 = \imath_0 A_0 + \imath_1 B$, $A \imath_1 = \imath_1 A_1 $ and \eqref{eq:alpha_beta_gamma_sum}, \eqref{eq:order_n_projected} leads to
\begin{displaymath} 
   r_{\widetilde{P}_0 \imath_0}^{\overline{\alpha}} r_{\widetilde{P}_0 \imath_1}^{\alpha}  g_n + l_{A_0} \pi_\alpha -   r_{\imath_1 A_1 }^{{\alpha}} \sum_{\substack{\beta,\gamma \in \{0,1\}^n \\ \beta + \gamma = \overline{\alpha}}} r_{\imath_0 A_0 }^\beta r_{\imath_1 B }^{\gamma}  \pi_n = \Gamma_\alpha.  
\end{displaymath}
Rearranging the inclusion terms yields
\begin{equation}
    \delta_{(0,\ldots,0)}(\alpha)  g_n + \left( l_{A_0}  -   r_{ A_0 }^{\overline{\alpha}} r_{ A_1 }^{\alpha} \right) \pi_\alpha  = \Gamma_\alpha +   r_{ A_1 }^{{\alpha}} \sum_{\substack{\beta,\gamma \in \{0,1\}^n \\ \beta + \gamma = \overline{\alpha}; \, |{\beta}| < |\overline{\alpha}|}} r_{ A_0 }^\beta r_{ B }^{\gamma}  \pi_{\overline{\beta}}, 
    \label{eq:order_n_final}
\end{equation}
where $\delta$ is the Kronecker symbol given by $\delta_\beta(\alpha) = 1$ if $\alpha = \beta$ and $\delta_\beta(\alpha) = 0$ otherwise.
Noting that $|{\beta}| < |\overline{\alpha}|$ is equivalent to $|\overline{\beta}| > |\alpha|$, we observe that
equation \eqref{eq:order_n_final} can be solved in decreasing order according to $|\alpha|$. 

Starting from the last component of \eqref{eq:order_n_final}, with $\alpha = (1,\ldots,1)$, we have
\begin{equation}
     \left( l_{A_0}  -   r_{A_1} \right) \pi_{(1,\ldots,1)}  = \Gamma_{(1,\ldots,1)}.
    \label{eq:order_n_eq111}
\end{equation}
Given \ref{hyp4:spec1}, we have via Theorem~\ref{thm:spectrum_BA_comm} and \eqref{eq:spectrum_lk_rk} that
\begin{displaymath}
    \sigma (l_{A_0^{-1}} r_{A_1}) \subset \sigma(A_0^{-1}) \sigma(A_1)^n 
\end{displaymath}
does not contain $1$ as an element, thus
\begin{displaymath}
    l_{A_0}  -   r_{A_1} = l_{A_0} \left(  \mathrm{id}_{\mathrm{M}_n(X_1;X_0)} -  l_{A_0^{-1}} r_{A_1} \right) 
\end{displaymath}
is invertible with bounded inverse and hence \eqref{eq:order_n_eq111} is solvable uniquely and boundedly for $\pi_{(1,\ldots,1)}$.

Proceeding by induction over $|\alpha|$, the remainder of \eqref{eq:order_n_final} with $|\alpha| > 0$ may be solved analogously.
For $0<j <n$ an integer, assume that all $\{\pi_\mu\}_{\mu \in \{0,1\}^n}$ with $j <|\mu| \leq n$ have been computed.
Then, the right hand sides of \eqref{eq:order_n_final} for all $\alpha \in \{0,1\}^n$ with $|\alpha| = j$ are available; equations of the form
\begin{displaymath}
    \left( l_{A_0}  -   r_{ A_0 }^{\overline{\alpha}} r_{ A_1 }^{\alpha} \right) \pi_\alpha  = \widetilde{\Gamma}_\alpha
\end{displaymath}
are to be solved. 
Hypothesis \ref{hyp4:spec1}
guarantees that all operations of the form
\begin{displaymath}
    l_{A_0} \left(  \mathrm{id}_{\mathrm{M}_\alpha(X;X_0)} -  l_{A_0^{-1}} r^{\overline{\alpha}}_{A_0} r^{\alpha}_{A_1} \right)
\end{displaymath}
are invertible boundedly,
as
\begin{displaymath}
    \sigma (l_{A_0^{-1}} r^{\overline{\alpha}}_{A_0} r^{\alpha}_{A_1}) \subset \sigma(A_0^{-1}) \sigma(A_0)^{n-j} \sigma(A_1)^{j}. 
\end{displaymath}

The only remaining component of \eqref{eq:order_n_final} is the one with $\alpha = (0,\ldots,0)$ containing $g_n$,
\begin{equation}
    g_n + \left( l_{A_0}  -   r_{A_0} \right) \pi_{(0,\ldots,0)}  = \widetilde{\Gamma}_{(0,\ldots,0)},
    \label{eq:order_n_eq1}
\end{equation}
where $\widetilde{\Gamma}_{(0,\ldots,0)}$ is known.
Equation~\eqref{eq:order_n_eq1} is always solvable by taking $g_n = \widetilde{\Gamma}_{(0,\ldots,0)}$ and $\pi_{(0,\ldots,0)} = 0$.
If \ref{hyp5:spec2} holds at $n$, we have once more that $(l_{A_0}  -   r_{A_0})$ is invertible and hence $g_n = 0$, $\pi_{(0,\ldots,0)} = (l_{A_0}  -   r_{A_0})^{-1}\widetilde{\Gamma}_{(0,\ldots,0)}$ is also a solution to \eqref{eq:order_n_eq1}.

Using the inverse map from \eqref{eq:xi}, we obtain the solution $\pi_n = \xi (\pi_\alpha)_{\alpha \in \{0,1\}^n}$ to \eqref{eq:order_n}.
It remains to show symmetry of $\pi_n$, which proceeds exactly as in \cite{Cabre2003a}. 
We may assume that $g_n$ has been chosen to be symmetric. 
Note that once $g_n$ is fixed, the solution $\pi_n$ obtained above is the unique one solving \eqref{eq:order_n}.
Since $l_{A_0} -r_A$ commutes with permutations and $\Gamma_n$ is invariant under permutations, we have by uniqueness that $\pi_n$ is also invariant under permutations.

To see \eqref{eq:gi_pi_bound}, we proceed once more via induction on the polynomial order $n$.
At $n=2$, we have by \eqref{eq:Gamma_n} that
\begin{displaymath}
  \Vert  \Gamma_2 \Vert_{\mathrm{M}_2(X;X_0)} \leq C \Vert \phi \Vert_{C^r_b(B_1^{X};X)},
\end{displaymath}
and through \eqref{eq:order_n_final} that
\begin{align*}
    &\Vert g_2 \Vert_{\mathrm{M}_2(X_0;X_0)} \leq C \Vert \phi \Vert_{C^r_b(B_1^{X};X)}, \\
    &\Vert \pi_2 \Vert_{\mathrm{M}_2(X;X_0)} \leq C \Vert \phi \Vert_{C^r_b(B_1^{X};X)}.
\end{align*}
Assuming these hold for orders up to $n-1$, \eqref{eq:Gamma_n} gives once more that 
$
  \Vert  \Gamma_n \Vert_{\mathrm{M}_n(X;X_0)} \\ \leq C \Vert \phi \Vert_{C^r_b(B_1^{X};X)},
$
and \eqref{eq:order_n_final} shows the same for $g_n$ and $\pi_n$.
Now \eqref{eq:gi_pi_bound} follows from 
\begin{displaymath}
    \Vert g_i \Vert_{C^r_b(B_\rho^{X_0};X_0)}  \leq  C \Vert g_i \Vert_{\mathrm{M}_i(X_0;X_0)},
\end{displaymath}
and similarly for $\pi_i$.

Finally, if $X$ is a real Banach space, we use the device of complexification to transform 
\eqref{eq:order_n_final} into an equation over $\mathrm{M}_\alpha (X;X_0)^c$, 
so that the we can make sense of the spectra of the relevant operators.
We note, as in \cite{Cabre2003a}, that the operators $l_K$ and $r_K^\alpha$ are well-behaved under complexification, i.e., $l_{K^c} = (l_K)^c$ and $r_{K^c}^\alpha = (r_K^\alpha)^c$, so that they preserve the real subspace of $\mathrm{M}_\alpha (X;X_0)^c$. 
The claim now follows by an inductive argument (over both $n$ and $|\alpha|$), for if lower order terms are real, then so are the solutions $g_n$ and $\pi_\alpha$ at each step. 
\end{proof}

\begin{remark} \label{remark:uniqueness_approxlemma}
    It follows from the proof of Lemma~\ref{lemma:approx} that, given a degree-$\ell$ polynomial on $X_0$, $\pi^0 = \sum_{n=2}^\ell \pi^0_n$, in place of $\pi_{(0,\ldots,0)}$ in \eqref{eq:order_n_eq1}, the remainder of the $g_n$ and $\pi_\alpha$ are uniquely determined.
    In particular, if $(g,\pi)$ and $(\tilde{g},\tilde{\pi})$ are two $C^\ell_b$ solutions of \eqref{eq:map_foliation_diag} such that\footnote{Here,  $(\cdot)^{\leq}$ refers to the $\ell$-jet (or $\ell$-Taylor expansion) of $(\cdot)$.} 
    $\pi^{\leq} \vert_{X_0} = \tilde{\pi}^{\leq} \vert_{X_0}$, then $\pi^{\leq} = \tilde{\pi}^{\leq}$ and $g^{\leq} = \tilde{g}^{\leq}$.
\end{remark}

The following segment proceeds completely analogously to \cite{Cabre2003a}.
Write $\psi:= \sum_{i=2}^\ell g_i$ so that $g = A_0 + \psi$.
We are looking to solve the remainder of 
\eqref{eq:map_foliation_diag} with $\pi^{\leq}$ available, i.e.,
\begin{equation}
    A_0 \pi^> - \pi^> \circ f = - \psi \circ (\pi^{\leq} + \pi^>) + \pi^{\leq} \circ f - A_0 \pi^{\leq},
    \label{eq:maineq_split}
\end{equation}
for the unknown $\pi^>$.
The left hand side of \eqref{eq:maineq_split} is linear, 
and hence we shall deal with this part first.

Equation \eqref{eq:maineq_split} will be solved over 
the same Banach spaces $\Gamma_{s,l}$ that were used in \cite{Cabre2003a}. 
Namely, for two Banach spaces $Y$, $Z$; $s \in \mathbb{N} \cup \{ \omega \}$ and $l \in \mathbb{N}$, $s \geq l$, we consider    
\begin{multline}
    \Gamma_{s,l}(B^{Y}_1;Z) = \bigg\{   h \in C^s_b(B^{Y}_1;Z) \; \Big| \; D^kh(0) = 0 \text{ for } 0 \leq k \leq l, \\
    \sup_{y \in B^{Y}_1} \frac{\Vert D^l h(y) \Vert_{\mathrm{M}_{l}(Y;Z)} }{|y|} < \infty \bigg\}, 
    \label{eq:Gammasldef}
\end{multline}
which forms a Banach space when equipped 
with the norm (see \cite{Cabre2003a})
\begin{equation}
    \Vert h \Vert_{\Gamma_{s,l}(B^{Y}_1;Z)} = \max \left\{ \Vert h \Vert_{C^s_b(B^{Y}_1;Z)} , \sup_{y \in B^{Y}_1} \frac{\Vert D^l h(y) \Vert_{\mathrm{M}_{l}(Y;Z)} }{|y|} \right\}
    \label{eq:Gammslnorm}
\end{equation}
if $s \in \mathbb{N}$, and
\begin{displaymath}
    \Vert h \Vert_{\Gamma_{\omega,l}(B^{Y}_1;Z)} = \Vert D^{l+1} h \Vert_{C^0_b(B^{Y}_1;\mathrm{M}_{l+1}(Y;Z))}  
\end{displaymath}
if $s = \omega$. 
If the spaces $Y$ and $Z$ are clear from the context, we shall use the shorthand notation $\Gamma_{s,l}$ for $\Gamma_{s,l}(B^{Y}_1;Z)$.

We remark that if $s > l$, the last condition in \eqref{eq:Gammasldef}
is superfluous, as
\begin{equation}
    \Vert D^l h(y) \Vert_{\mathrm{M}_{l}(Y;Z)} \leq |y| \int_0^1 \Vert D^{l+1} h(sy) \Vert_{\mathrm{M}_{l+1}(Y;Z)} ds,
    \label{eq:lastpartgamma}
\end{equation}
and the norm \eqref{eq:Gammslnorm} is equivalent to $\Vert \cdot \Vert_{C^s_b(B_1^Y;Z)}$.
For the existence result, we use $\Gamma_{r,\ell}$ with $r> \ell$; the $s= l$ version is needed only for the uniqueness part.

\begin{lemma} \label{lemma:S} 
    Let $(X,| \cdot |)$ and $Y$ be Banach spaces.  
    Let $f: O \to X$ be $C^s_b$ smooth (with $O \subset X$ an open neighborhood of $0$), $s \in \mathbb{N} \cup \{ \omega \}$, with $f(0) = 0$ and such that $A:= Df(0)$ satisfies $\sigma(A) \subset B_1^{\mathbb{C}}$; denote $\phi = f - A$ and suppose $| \cdot |$ is adapted to $A$ (Remark~\ref{remark:adapt}).
    Let $A_0 \in \mathrm{Aut}(Y)$ such that 
    \begin{equation}
        \Vert A_0^{-1} \Vert_{\mathcal{L}(Y)} \Vert A \Vert_{\mathcal{L}(X)}^{l + 1} < 1 
        \label{eq:ell2}
    \end{equation}
    for some $l \in \mathbb{N}$, $ l \leq s$.
    
    Then, for $\Vert \phi \Vert_{C^s_b(B_1^X;X)}$ sufficiently small, the linear operator 
    \begin{equation}
        \mathcal{S} h = A_0 h - h \circ f
        \label{eq:S}
    \end{equation}
    is an automorphism of $\Gamma_{s,l}(B^{X}_1;Y)$. 
    Moreover, the bound on
    $ \mathcal{S}^{-1} $ is independent of $\Vert \phi \Vert_{C^s_b(B_1^X;X)}$.
\end{lemma}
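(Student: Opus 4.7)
The plan is to write $\mathcal{S} = A_0(\mathrm{id} - \mathcal{T})$ where $\mathcal{T}h := A_0^{-1}(h\circ f)$ is essentially a weighted composition operator, and then invert $\mathrm{id} - \mathcal{T}$ by a Neumann series after showing $\|\mathcal{T}\|_{\mathcal{L}(\Gamma_{s,l})} < 1$. Since the desired bound on $\mathcal{S}^{-1}$ must be independent of $\|\phi\|_{C^s_b}$, the contraction rate should come solely from the spectral gap in \eqref{eq:ell2}, with $\phi$-contributions appearing only as lower-order perturbations.

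First I would check that $\mathcal{T}$ maps $\Gamma_{s,l}(B^X_1;Y)$ to itself. Once $\|\phi\|_{C^s_b}$ is small enough that $f(B^X_1)\subset B^X_1$, the composition $h\circ f$ is defined on $B^X_1$. A Faà di Bruno expansion
\begin{displaymath}
    D^k(h\circ f)(x) = \sum_{j=1}^{k}\,\sum_{\substack{(m_1,\ldots,m_j)\\ m_1+\cdots+m_j=k}} C_{j,m}\; D^j h(f(x))\bigl(D^{m_1}f(x),\ldots,D^{m_j}f(x)\bigr)
\end{displaymath}
together with $f(0)=0$ and $D^j h(0)=0$ for $j\le l$ shows that $D^k(h\circ f)(0)=0$ for $0\le k\le l$. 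The Lipschitz-type bound at the $l$-th derivative follows since each summand with $j\le l$ inherits a factor of $|f(x)|\lesssim |x|$ from $\|D^jh(f(x))\|\lesssim |f(x)|^{l+1-j}$ (using $h\in\Gamma_{s,l}$ and Taylor), and the $C^s_b$ part follows from the standard Faà di Bruno estimates.

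The heart of the argument is the contraction estimate. Using the norm \eqref{eq:Gammslnorm} (and \eqref{eq:lastpartgamma} to pass to $D^{l+1}$ when $s>l$, or directly $\|D^{l+1}h\|_{C^0_b}$ in the analytic case $s=\omega$), the dominant contribution to $\|\mathcal{T}h\|_{\Gamma_{s,l}}$ comes from the partition with $j=l+1$ and all $m_i=1$, namely
\begin{displaymath}
    A_0^{-1}\,D^{l+1}h(f(x))\bigl(Df(x),\ldots,Df(x)\bigr),
\end{displaymath}
whose norm is bounded by $\|A_0^{-1}\|\,\|Df\|^{l+1}\,\|h\|_{\Gamma_{s,l}} \le \|A_0^{-1}\|\,\bigl(\|A\|+\|\phi\|_{C^1_b}\bigr)^{l+1}\,\|h\|_{\Gamma_{s,l}}$. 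By \eqref{eq:ell2} (in the norm adapted per Remark~\ref{remark:adapt}), there exists $q<1$ with $\|A_0^{-1}\|\,\|A\|^{l+1}<q$; taking $\|\phi\|_{C^s_b}$ small enough absorbs the $\phi$-correction so that this principal term is bounded by $q\,\|h\|_{\Gamma_{s,l}}$. All remaining Faà di Bruno terms involve either a higher-order derivative $D^{m_i}f$ with $m_i\ge 2$ (hence a factor of $\|\phi\|_{C^s_b}$) or a lower $j<l+1$ (hence a lower power of $\|A\|$ compensated by Taylor remainders), and each is controlled by $C\,\|\phi\|_{C^s_b}\,\|h\|_{\Gamma_{s,l}}$ with $C$ depending only on $\|A\|$, $\|A_0^{-1}\|$, $s$ and $l$. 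Choosing $\|\phi\|_{C^s_b}$ sufficiently small delivers $\|\mathcal{T}\|_{\mathcal{L}(\Gamma_{s,l})} \le q' < 1$ with $q'$ depending only on the spectral data. The Neumann series $\mathcal{S}^{-1} = \bigl(\sum_{k\ge 0}\mathcal{T}^k\bigr)A_0^{-1}$ then converges to a bounded inverse, with $\|\mathcal{S}^{-1}\| \le \|A_0^{-1}\|/(1-q')$ independent of $\phi$.

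The main obstacle will be the book-keeping in the contraction estimate: one must simultaneously control the two components of the $\Gamma_{s,l}$-norm (the $C^s_b$ part and the weighted $D^l$ part), and verify that the spectral hypothesis \eqref{eq:ell2} is tight precisely at the partition $j=l+1$, $m_i\equiv 1$, so that every other Faà di Bruno term is strictly subdominant. In the analytic case $s=\omega$ the estimate is a little cleaner since only $D^{l+1}h$ appears in the norm, whereas for finite $s>l$ one either uses \eqref{eq:lastpartgamma} to reduce to $D^{l+1}$ or repeats the above estimate at each order $l+1\le k\le s$ (where the same dominant term $\|A\|^k\,\|A_0^{-1}\|<\|A\|^{l+1}\|A_0^{-1}\|<q$ gives the contraction, and lower $j$ contribute $\|\phi\|$-small terms).
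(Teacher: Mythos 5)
Your decomposition $\mathcal{S}=A_0(\mathrm{id}-\mathcal{T})$ with $\mathcal{T}h=A_0^{-1}(h\circ f)$, inverted by the Neumann series $\mathcal{S}^{-1}=\big(\sum_{k\ge 0}\mathcal{T}^k\big)A_0^{-1}$, reproduces exactly the paper's explicit formula $h=\sum_{j\ge 0}A_0^{-(j+1)}\eta\circ f^j$, and both proofs rest on the same Faà di Bruno bookkeeping and the same spectral inequality \eqref{eq:ell2}. The only substantive organizational difference is that the paper proves absolute convergence by estimating each summand, which requires the $j$-uniform iterate bound $\|D^i f^j\|_{C^0_b}\le C_i(\|A\|+\varepsilon)^j$ (equation~\eqref{eq:Di_fj}), whereas you establish the single-iterate contraction $\|\mathcal{T}\|_{\mathcal{L}(\Gamma_{s,l})}<1$ and thereby sidestep the uniformity-in-$j$ issue entirely -- a mild simplification that is legitimate, because the dominant Faà di Bruno term carries coefficient $1$ and the Taylor constants $\tfrac{1}{(l+1-k)!}\le 1$ arising from $D^jh(0)=0$ for $j\le l$ only help the estimate, so the claimed contraction does go through in the $\Gamma_{s,l}$-norm once $\|\phi\|_{C^s_b}$ is small.
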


\begin{proof} 

This is essentially just a restatement of Lemma~3.3 of \cite{Cabre2003a} (see also Theorem~5.1 of \cite{banyaga1996cohomology}), the proof remains virtually the same.  

It is easy to see that $\mathcal{S}$ is bounded.  
We construct an inverse.
Given $\eta \in \Gamma_{s,l}$, 
a solution to $\mathcal{S}h = \eta$ is given by
\begin{equation}
    h = \sum_{j = 0}^\infty (A_0)^{-(j+1)} \eta \circ f^j.
    \label{eq:S_solution}
\end{equation}
This is well defined provided we choose $\Vert \phi \Vert_{C^s_b(B_1^X;X)}$ small enough to ensure $f(B_1^X) \subset B_1^X$ (using Remark~\ref{remark:adapt}).
We
need to show that $h \in \Gamma_{s,l}$, and that such an $h$ is obtained uniquely and boundedly from $\eta$. 
The first assertion amounts to the absolute convergence of \eqref{eq:S_solution} in $\Gamma_{s,l}$;
we end up showing
\begin{equation}
    \sum_{j = 0}^\infty \left\Vert (A_0)^{-(j+1)} \eta \circ f^j \right\Vert_{\Gamma_{s,l}} 
    \leq C \Vert \eta \Vert_{\Gamma_{s,l}}, \qquad \text{for all } \eta \in \Gamma_{s,l},
    \label{eq:S_inv_bound}
\end{equation}
which in addition establishes $\Vert h \Vert_{\Gamma_{s,l}} \leq C \Vert \eta \Vert_{\Gamma_{s,l}}$.

By virtue of $\Gamma_{s,l}$ and Taylor's formula,
we have that 
\begin{equation}
  \Vert D^i \eta(x) \Vert_{\mathrm{M}_i(X;Y)} \leq C \Vert \eta \Vert_{\Gamma_{s,l}} |x|^{l - i + 1}, \qquad x \in B_1^X,
  \label{eq:Di_eta}
\end{equation}
for $0 \leq i \leq l$.  
Moreover, by perhaps further adjusting $\Vert \phi \Vert_{C^s_b(B_1^X;X)}$, it can be ensured that 
\begin{equation}
    \Vert D^i f^j \Vert_{C^0_b} \leq C_i (\Vert A \Vert_{\mathcal{L}(X)} + \varepsilon)^j, \qquad 1 \leq i \leq s, 
    \label{eq:Di_fj}
\end{equation}
where $C_i$ is independent of $j$, with $\varepsilon> 0$ fixed small enough so that 
\begin{equation}
    \Vert A_0^{-1} \Vert_{\mathcal{L}(Y)} (\Vert A \Vert_{\mathcal{L}(X)} + \varepsilon)^{l + 1} < 1.
    \label{eq:hyp3_lemmas}
\end{equation}
Property \eqref{eq:Di_fj} has already been noted in \cite{Cabre2003a} and \cite{banyaga1996cohomology} among others -- the result follows from an argument analogous to the one below.

Via the mean value theorem, \eqref{eq:Di_eta} and \eqref{eq:Di_fj} yield 
\begin{equation} 
    \Vert D^i \eta (f^j(x)) \Vert_{\mathrm{M}_i(X;Y)} \leq C \Vert \eta \Vert_{\Gamma_{s,l}} (\Vert A \Vert_{\mathcal{L}(X)} + \varepsilon)^{j(l - i + 1)} |x|^{l - i + 1}
    \label{eq:Di_eta_fj}
\end{equation}
for $0 \leq i \leq l$.
The $i=0$ case of \eqref{eq:Di_eta_fj} gives the $C^0_b$ part of \eqref{eq:S_inv_bound}.

Bounds on the higher derivatives are obtained via Faà di Bruno's formula,
\begin{displaymath}
    D^k (\eta \circ f^j) = \sum_{i=1}^k \sum_{\substack{m \in \mathbb{N}^i \\ |m| = k}} C_{m} D^i \eta \circ f^j [D^{m_1}f^j,\ldots, D^{m_i}f^j].
\end{displaymath}
Using 
\eqref{eq:Di_fj} and \eqref{eq:Di_eta_fj}, we obtain
\begin{multline} 
    \Vert D^k (\eta \circ f^j) (x) \Vert_{\mathrm{M}_k(X;Y)} \leq C \Vert \eta \Vert_{\Gamma_{s,l}} \bigg(  \sum_{1 \leq i \leq l} (\Vert A \Vert_{\mathcal{L}(X)} + \varepsilon)^{j(l - i + 1)+ji}  |x|^{l - i + 1} \\
     +  \sum_{l < i \leq k} (\Vert A \Vert_{\mathcal{L}(X)} + \varepsilon)^{ji} \bigg).
     \label{eq:Dk_eta_fj}
\end{multline}
For $1 \leq k \leq l$, \eqref{eq:Dk_eta_fj} gives
\begin{displaymath}
    \Vert A_0^{-(j+1)} D^k (\eta \circ f^j) (x) \Vert_{\mathrm{M}_k(X;Y)} \leq C \Vert A_0^{-1} \Vert_{\mathcal{L}(Y)} \Vert \eta \Vert_{\Gamma_{s,l}}   \left( \Vert A_0^{-1} \Vert_{\mathcal{L}(Y)} (\Vert A \Vert_{\mathcal{L}(X)} + \varepsilon)^{l + 1} \right)^j  |x|.
\end{displaymath}
Otherwise, for $l < k \leq s$,
\begin{equation}
    \Vert A_0^{-(j+1)} D^k (\eta \circ f^j) (x) \Vert_{\mathrm{M}_k(X;Y)} \leq C \Vert A_0^{-1} \Vert_{\mathcal{L}(Y)} \Vert \eta \Vert_{\Gamma_{s,l}}   \left( \Vert A_0^{-1} \Vert_{\mathcal{L}(Y)} (\Vert A \Vert_{\mathcal{L}(X)} + \varepsilon)^{l + 1} \right)^j.
    \label{eq:omega_case}
\end{equation}
Via \eqref{eq:hyp3_lemmas}, the last two equations
establish \eqref{eq:S_inv_bound} for all parts of the $\Gamma_{s,l}$ norm and hence the absolute convergence of \eqref{eq:S_solution} 
(in the $s = \omega$ case, \eqref{eq:omega_case} continues to hold and can be used with $k = l +1 $ to conclude the same). 

It remains to address uniqueness.
If $h \in \ker (\mathcal{S})$, then $h = A_0^{-1} h \circ f$, and hence $h = A_0^{-j} h \circ f^j$ for all $j \geq 1$.
But $\Vert A_0^{-j} h \circ f^j \Vert_{\Gamma_{s,l}} \to 0 $ as $j \to \infty$, by \eqref{eq:S_inv_bound} applied with $\eta = h$.
We conclude that $\ker (\mathcal{S}) = \{0\}$.
\end{proof}

We turn to study the nonlinear part of the invariance equation.
Note that if $\pi^> \in \Gamma_{r,\ell}$, the right hand side of \eqref{eq:maineq_split} belongs to $\Gamma_{r,\ell}$ by Lemma~\ref{lemma:approx}.
Hypotheses \ref{hyp1:X}-\ref{hyp3:ell} and
Remark~\ref{remark:adapt} imply that Lemma~\ref{lemma:S} is applicable to the pair $(f,A_0)$ defined in \ref{hyp2:f} with $Y = X_0$ and $(s,l) = (r,\ell)$. 
Hence, we may seek a solution to \eqref{eq:maineq_split} equivalently as a fixed point of 
\begin{equation}
    \mathcal{T}(\pi^>) := \mathcal{S}^{-1} \left(- \psi \circ (\pi^{\leq} + \pi^>) + \pi^{\leq} \circ f - A_0 \pi^{\leq} \right).
    \label{eq:mathcal_T}
\end{equation}

\begin{lemma} \label{lemma:Tfrechet}
Suppose \ref{hyp1:X}-\ref{hyp4:spec1} hold with $r \in \mathbb{N} \cup \{ \omega\}$, that $\Vert \phi \Vert_{C^r_b(B_1^X;X)}$ is as small as in Lemma~\ref{lemma:S} applied to the pair $(f,A_0)$ from \ref{hyp2:f} (with $(s,l) = (r,\ell)$), and that $\psi = \sum_{n = 2}^\ell g_n$ and $\pi^{\leq}$ are the results of Lemma~\ref{lemma:approx}.
Then, the map $\mathcal{T}:\Gamma_{r,\ell} \to \Gamma_{r,\ell}$ from \eqref{eq:mathcal_T}   is well defined and differentiable in the Fréchet sense at all $\pi^> \in \overline{B}_1^{\Gamma_{r,\ell}}$, the closed unit ball in $\Gamma_{r,\ell}$.  
Its derivative is given by  
\begin{equation}
    D_{\pi^>}\mathcal{T}(\pi^>)[\eta] = -\mathcal{S}^{-1}D_x \psi \circ (\pi^{\leq}+\pi^>)[\eta], \qquad \eta  \in \Gamma_{r,\ell}.
    \label{eq:DmathcalT}
\end{equation}
Moreover, 
\begin{equation}
    \left\Vert D\mathcal{T}(\pi^>) \right\Vert_{\mathcal{L}(\Gamma_{r,\ell})} \leq C \left\Vert \mathcal{S}^{-1} \right\Vert_{\mathcal{L}(\Gamma_{r,\ell})} \Vert \psi \Vert_{C^{\ell}_b(B_\rho^{X_0};X_0)}.
    \label{eq:DmathcalTbound}
\end{equation}
for some $\rho>0$ large enough so that $(\pi^{\leq}+\pi^>)(B_1^X) \subset B_{\rho/2}^{X_0}$. \end{lemma}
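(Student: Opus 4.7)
The first task is to verify that $\mathcal{T}$ maps $\overline{B}_1^{\Gamma_{r,\ell}}$ into $\Gamma_{r,\ell}$. By Lemma~\ref{lemma:S}, $\mathcal{S}^{-1}$ is a bounded automorphism of $\Gamma_{r,\ell}$, so it suffices to prove that
\begin{displaymath}
F(\pi^>) := -\psi \circ (\pi^\leq + \pi^>) + \pi^\leq \circ f - A_0 \pi^\leq
\end{displaymath}
lies in $\Gamma_{r,\ell}$. Standard composition bounds show $F(\pi^>) \in C^r_b$ (after perhaps shrinking $\Vert \phi \Vert$ so that the image of $\pi^\leq + \pi^>$ over $B^X_1$ is contained in a ball on which $\psi$ and all its derivatives are controlled). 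For the vanishing of the $\ell$-jet at $0$, Lemma~\ref{lemma:approx} (i.e., \eqref{eq:approx_lemma}) gives that the $\ell$-jet of $\pi^\leq \circ f - A_0 \pi^\leq - \psi \circ \pi^\leq$ vanishes at $0$; meanwhile, the difference $\psi \circ (\pi^\leq + \pi^>) - \psi \circ \pi^\leq$ also has vanishing $\ell$-jet at $0$, since Faà di Bruno's formula expresses $D^k(\psi \circ (\pi^\leq + \pi^>))(0)$ in terms of derivatives of $\psi$ at $0$ and of $\pi^\leq + \pi^>$ at $0$ up to order $k$, and the latter coincide with those of $\pi^\leq$ alone for $k \leq \ell$ (as $D^j\pi^>(0)=0$ for $0 \leq j \leq \ell$). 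Since $r > \ell$, the supremum condition in \eqref{eq:Gammasldef} is automatic via \eqref{eq:lastpartgamma}.

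\textbf{Fréchet differentiability.} For $\pi^> \in \overline{B}_1^{\Gamma_{r,\ell}}$ and $\eta \in \Gamma_{r,\ell}$ of small norm, Taylor's theorem applied to the polynomial $\psi$ yields
\begin{displaymath}
\mathcal{T}(\pi^>+\eta) - \mathcal{T}(\pi^>) + \mathcal{S}^{-1} D_x\psi \circ (\pi^\leq+\pi^>)[\eta]
= -\mathcal{S}^{-1} \int_0^1 (1-t)\, D^2_x\psi \circ (\pi^\leq+\pi^>+t\eta)[\eta,\eta]\, dt.
\end{displaymath}
Since $\psi$ is a polynomial of degree $\leq \ell$, $D^2_x\psi$ has polynomial coefficients of degree $\leq \ell-2$, so Faà di Bruno's formula bounds the $C^r_b$ norm of the integrand by $C \Vert \eta \Vert_{C^r_b}^2 \leq C\Vert \eta\Vert_{\Gamma_{r,\ell}}^2$ uniformly in $t \in [0,1]$; moreover, as the integrand is quadratic in $\eta$, it vanishes to order $2(\ell+1)$ at $0$, hence lies in $\Gamma_{r,\ell}$. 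Boundedness of $\mathcal{S}^{-1}$ then delivers the required $o(\Vert\eta\Vert_{\Gamma_{r,\ell}})$ estimate, establishing \eqref{eq:DmathcalT}. A parallel argument shows that the map $\pi^> \mapsto D\mathcal{T}(\pi^>)$ is continuous, so $\mathcal{T}$ is $C^1$ on $\overline{B}_1^{\Gamma_{r,\ell}}$.

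\textbf{Norm bound.} From \eqref{eq:DmathcalT},
\begin{displaymath}
\Vert D\mathcal{T}(\pi^>) \Vert_{\mathcal{L}(\Gamma_{r,\ell})} \leq \Vert \mathcal{S}^{-1} \Vert_{\mathcal{L}(\Gamma_{r,\ell})} \cdot \big\Vert \eta \mapsto D_x\psi \circ (\pi^\leq + \pi^>)[\eta] \big\Vert_{\mathcal{L}(\Gamma_{r,\ell})}.
\end{displaymath}
Because $\psi = \sum_{n=2}^{\ell} g_n$ has no constant or linear part, $D_x\psi(0) = 0$, hence $D_x\psi \circ (\pi^\leq + \pi^>)$ vanishes at $0$; combined with $\eta \in \Gamma_{r,\ell}$ vanishing to order $\ell+1$ at $0$, the product $D_x\psi \circ (\pi^\leq + \pi^>)[\eta]$ lies in $\Gamma_{r,\ell}$, and Faà di Bruno gives $\Vert D_x\psi \circ (\pi^\leq + \pi^>)[\eta] \Vert_{C^r_b} \leq C \Vert \psi \Vert_{C^\ell_b(B_\rho^{X_0};X_0)} \Vert \eta \Vert_{\Gamma_{r,\ell}}$, for $\rho$ chosen so that $(\pi^\leq+\pi^>)(B_1^X) \subset B_{\rho/2}^{X_0}$. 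This yields \eqref{eq:DmathcalTbound}.

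\textbf{Expected main obstacle.} The conceptual content is elementary, but the principal difficulty is bookkeeping through Faà di Bruno's formula to confirm that the $\Gamma_{r,\ell}$-specific vanishing conditions at $0$ are preserved at each stage, and that the relevant norms transform as claimed; the case $r = \omega$ (where the norm of $\Gamma_{\omega,\ell}$ is given by the $(\ell+1)$-st derivative) requires a slight adaptation of the estimates, analogous to the treatment of \eqref{eq:omega_case} in Lemma~\ref{lemma:S}.
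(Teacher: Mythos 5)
Your proof is correct and follows essentially the same route as the paper: composition estimates via Faà di Bruno, and a Taylor argument to exhibit the Fréchet derivative. The only cosmetic differences are that you use a single second-order Taylor expansion with integral remainder where the paper applies the finite-increments formula twice in succession (essentially equivalent), that you verify well-definedness of $\mathcal{T}$ explicitly (which the paper glosses over as following from the assumptions), and that the paper invokes Theorem~15 of \cite{Irwin72} as a general backdrop for Fréchet differentiability of composition operators while you do the estimate directly; none of these differences is substantive.
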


\begin{proof}
    The assumptions ensure that $\mathcal{T}$ is well defined.
    Differentiability follows in essence from Theorem~15 of \cite{Irwin72}; we will be thorough nonetheless.

    Suppose at first that $r \in \mathbb{N}$.
    We have that $\pi^{\leq} \in C^\infty_b(B_1^{X};X_0)$, and  
    that
    $\psi$ is $C^\infty_b(B_\rho^{X_0};X_0)$, with $\rho>0$ chosen
    such that $(\pi^{\leq}+\pi^>)(B_1^X) \subset B_{\rho/2}^{X_0}$ for all $\pi^> \in \overline{B}_1^{\Gamma_{r,\ell}}$.
    Take $\eta \in \Gamma_{r,\ell}$ sufficiently small so that $(\pi^{\leq}+\pi^>+\eta)(B_1^X) \subset B_{\rho}^{X_0}$, then
    \begin{align}
        \big\Vert \mathcal{T}(\pi^>+\eta)-&\mathcal{T}(\pi^>)  + \mathcal{S}^{-1}D  \psi \circ (\pi^{\leq}+\pi^>)[\eta] \big\Vert_{\Gamma_{r,\ell}} \nonumber  \\
        &= \left\Vert - \int_0^1 \mathcal{S}^{-1}D \psi \circ (\pi^{\leq}+\pi^> +s \eta)[\eta] ds  + \mathcal{S}^{-1}D \psi \circ (\pi^{\leq}+\pi^>)[\eta] \right\Vert_{\Gamma_{r,\ell}} \nonumber \\
        &\leq \left\Vert \mathcal{S}^{-1} \right\Vert_{\mathcal{L}(\Gamma_{r,\ell})}  \int_{[0,1]^2} \left\Vert   
        D^2 \psi \circ (\pi^{\leq}+\pi^> + ts \eta)[\eta,s\eta] \right\Vert_{\Gamma_{r,\ell}} dtds \nonumber \\
        &\leq C\left\Vert \mathcal{S}^{-1} \right\Vert_{\mathcal{L}(\Gamma_{r,\ell})} \Vert \psi \Vert_{C^{r+2}_b(B_\rho^{X_0};X_0)} \left\Vert \eta \right\Vert_{\Gamma_{r,\ell}}^2. \label{eq:DTlemma_proof}
    \end{align}
    Here, the last line used  
    \begin{multline}
        D^k \left( D^2 \psi \circ (\pi + ts \eta)[\eta,s\eta] \right)=
        \sum_{m=0}^k C_m D^2 \psi \circ (\pi + ts \eta) [D^m \eta, sD^{k-m} \eta]  \\
        +\sum_{\substack{n \in \mathbb{N}^2_0 \\ |n| < k}} \sum_{i=1}^{k - |n|} \sum_{\substack{j \in \mathbb{N}^i \\ |j| = k - |n|}} C_{j,n} D^{2+i} \psi \circ (\pi + ts \eta) [D^{n_1} \eta, sD^{n_2} \eta, D^{j_1}(\pi + ts \eta),\ldots,D^{j_i}(\pi + ts \eta)], \label{eq:D2g_exp}
    \end{multline}
    where $\pi = \pi^{\leq}+\pi^>$. 
    By considering the limit $\left\Vert \eta \right\Vert_{\Gamma_{r,\ell}} \to 0$ in \eqref{eq:DTlemma_proof}, we observe that the Fréchet derivative of $ \mathcal{T}$ exists and is given by \eqref{eq:DmathcalT}. 
    The bound \eqref{eq:DmathcalTbound} is obtained by proceeding as in \eqref{eq:D2g_exp},
    and noting that $\Vert \psi \Vert_{C^{k}_b(B_\rho^{X_0};X_0)} = \Vert \psi \Vert_{C^{\ell}_b(B_\rho^{X_0};X_0)}$ for all $k \geq \ell$.
    
    The above line of reasoning is applicable in the $r = \omega$ case as well, using \eqref{eq:D2g_exp} with $k = \ell + 1$.  
\end{proof}

The above lemma exploited the specific, polynomial form of $\psi$.
The situation is a little more dire in the general case, which will be required for the uniqueness portion of Theorem~\ref{thm:maps}.
Indeed, if instead, $\psi$ is an arbitrary $C^{s+1}_b$ function, $\ell \leq s \leq r-1$ (in the $r \in \mathbb{N}$ case), for which $\mathcal{T}$ is well defined,  
the map $\mathcal{T}$ can not be expected to be differentiable $\Gamma_{s+1,\ell} \to \Gamma_{s+1,\ell}$, as the formula for the norm of $D\mathcal{T}$  
would involve $D^{s+2} \psi$. 
Even if we consider $\mathcal{T}:\Gamma_{s,\ell} \to \Gamma_{s,\ell}$ 
it is generally \textit{not} differentiable in the Fréchet sense (unless $\psi$ happens to have uniformly continuous derivatives or $X_0$ is finite dimensional \cite{Irwin72}).
Nonetheless, the finite increments formula continues to hold and is sufficient for our purposes here, as detailed in the following remark.

\begin{remark} \label{remark:DT_replacement}
    Suppose instead, that $\psi \in C^{s+1}_b(B_\rho^{X_0};X_0)$ and $\pi^{\leq} \in C^{s+1}_b(B_1^{X};X_0)$, $\ell \leq s \leq r$ ($r \in \mathbb{N}$), are arbitrary maps (they are no longer necessarily polynomials) with $\pi^{\leq}(0) = 0$ and $D\pi^{\leq} (0) = \widetilde{P}_0$ and $\rho > 0$ chosen such that $(\pi^\leq + \pi^>)(B_1^X) \subset B_{\rho/2}^{X_0}$ for all $\pi^> \in \overline{B}_1^{\Gamma_{s,\ell}}$.  
    If they satisfy the relationship 
    \begin{equation}
        (A_0 + \psi) \circ \pi^{\leq}(x) = \pi^{\leq} \circ f(x) + o(|x|^\ell)
        \label{eq:orderL_replacement}
    \end{equation}
    as $x \to 0$, then, by their assumed smoothness, the right hand side of \eqref{eq:maineq_split},
    \begin{displaymath}
        -\psi \circ (\pi^{\leq} + \pi^>) + \pi^\leq \circ f - A_0 \pi^\leq, 
    \end{displaymath}
    belongs to $\Gamma_{s,\ell}(B_1^X;X_0)$ for the prescribed range of $s$ (at $s = \ell $ the last part of \eqref{eq:Gammasldef} follows from the fact that $\psi,\pi^{\leq}$ and $f$ are at least $C^{s+1}_b$ and \eqref{eq:lastpartgamma}; if $s = r$, then $f$ is merely $C^s_b$, but, since $r > \ell$, \eqref{eq:Gammasldef} does not include the last term).
    Hence, $\mathcal{T}:\Gamma_{s,\ell} \to \Gamma_{s,\ell}$ as in \eqref{eq:mathcal_T} is well defined.
    Using the finite increments formula,
    \begin{displaymath}
        \mathcal{T}(\pi^> + \eta) - \mathcal{T}(\pi^>) = \int_0^1 \frac{d}{dt} \mathcal{T}(\pi^> + t\eta) dt,
    \end{displaymath}
    we may obtain a bound similar to \eqref{eq:DmathcalTbound} with the same procedure,
    \begin{equation}
        \left\Vert \frac{d}{dt} \mathcal{T}(\pi^> + t\eta) \right\Vert_{\Gamma_{s,\ell}} \leq C \left\Vert \mathcal{S}^{-1} \right\Vert_{\mathcal{L}(\Gamma_{s,\ell})} \Vert \psi \Vert_{C^{s+1}_b(B_\rho^{X_0};X_0)} \Vert \eta \Vert_{\Gamma_{s,\ell}},
        \label{eq:contraction_replacement}
    \end{equation}
    for $\eta \in \Gamma_{s,\ell}$ small enough and $\pi^> \in \overline{B}_1^{\Gamma_{s,\ell}}$,  
    which will be sufficient to establish the contraction property in the lemma below and eventually the uniqueness claim of Theorem~\ref{thm:maps}.
    The preceding argument is also applicable in the $r = \omega$ case if both $\pi^\leq$ and $\psi$ are $C^\omega_b$ maps. 
    In this case, both '$s$' and '$s+1$' in \eqref{eq:contraction_replacement} should be interpreted as '$\omega$', which is a convention we shall stick to in what follows.
\end{remark}

\begin{lemma} \label{lemma:contraction}
    Assume \ref{hyp1:X}-\ref{hyp4:spec1} with $r \in \mathbb{N} \cup \{\omega \}$, that $\Vert \phi \Vert_{C^r_b(B_1^X;X)}$ is sufficiently small, and that $\psi = \sum_{n = 2}^\ell g_n$ and $\pi^{\leq}$ are the results of Lemma~\ref{lemma:approx}.
    Then, with $\mathcal{T}$ as in \eqref{eq:mathcal_T},
    \begin{equation}
        \mathcal{T}(\pi^>) = \pi^>
        \label{eq:FPeq}
    \end{equation}
    has a unique solution in $\overline{B}_1^{\Gamma_{r,\ell}}$.

    If instead, $\psi \in C^{s+1}_b(B_\rho^{X_0};X_0)$\footnote{Here, $\rho > 0$ is such that $(\pi^\leq + \pi^>)(B_1^X) \subset B_{\rho/2}^{X_0}$ for all $\pi^> \in \overline{B}_1^{\Gamma_{s,\ell}}$.} and $\pi^{\leq} \in C^{s+1}_b(B_1^{X};X_0)$, $\ell \leq s \leq r$ (if $r=\omega$, $\psi$ and $\pi^{\leq}$ should be $C^\omega_b$), are arbitrary maps satisfying \eqref{eq:orderL_replacement} with $\pi^{\leq}(0) = 0$ and $D\pi^{\leq} (0) = \widetilde{P}_0$;
    and moreover, $\Vert \pi^{\leq} - \widetilde{P}_0 \Vert_{C^{s+1}_b(B_1^{X};X_0)}$ and $\Vert \psi \Vert_{C^{s+1}_b(B_\rho^{X_0};X_0)}$ are sufficiently small, then \eqref{eq:FPeq} has a unique solution in $\overline{B}_1^{\Gamma_{s,\ell}}$.
\end{lemma}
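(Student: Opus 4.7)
The natural strategy is the Banach fixed point theorem applied to $\mathcal{T}:\overline{B}_1^{\Gamma_{r,\ell}} \to \Gamma_{r,\ell}$. I will verify separately that (a) $\mathcal{T}$ is a strict contraction on $\overline{B}_1^{\Gamma_{r,\ell}}$, and (b) $\mathcal{T}$ maps $\overline{B}_1^{\Gamma_{r,\ell}}$ into itself; uniqueness in $\overline{B}_1^{\Gamma_{r,\ell}}$ is then immediate. The whole strategy hinges on pushing the smallness of $\Vert \phi \Vert_{C^r_b(B_1^X;X)}$ through Lemma~\ref{lemma:approx} into smallness of $\Vert \psi \Vert_{C^\ell_b(B_\rho^{X_0};X_0)}$ and $\Vert \pi^{\leq} - \widetilde{P}_0 \Vert_{C^r_b(B_1^X;X_0)}$ via \eqref{eq:gi_pi_bound}, and combining this with the \emph{uniform} bound on $\Vert \mathcal{S}^{-1} \Vert_{\mathcal{L}(\Gamma_{r,\ell})}$ supplied by Lemma~\ref{lemma:S}.

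For the contraction property, I apply Lemma~\ref{lemma:Tfrechet}: the estimate \eqref{eq:DmathcalTbound} together with the mean value theorem on the convex set $\overline{B}_1^{\Gamma_{r,\ell}}$ gives
\begin{displaymath}
\Vert \mathcal{T}(\pi_1^>) - \mathcal{T}(\pi_2^>) \Vert_{\Gamma_{r,\ell}} \leq C \Vert \mathcal{S}^{-1} \Vert_{\mathcal{L}(\Gamma_{r,\ell})} \Vert \psi \Vert_{C^\ell_b(B_\rho^{X_0};X_0)} \Vert \pi_1^> - \pi_2^> \Vert_{\Gamma_{r,\ell}},
\end{displaymath}
and \eqref{eq:gi_bound} yields $\Vert \psi \Vert_{C^\ell_b(B_\rho^{X_0};X_0)} \leq C' \Vert \phi \Vert_{C^r_b(B_1^X;X)}$, so shrinking $\Vert \phi \Vert$ forces the Lipschitz constant below $1/2$. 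For the self-map property, note that $\mathcal{T}(0) = \mathcal{S}^{-1}\bigl( \pi^\leq \circ f - (A_0 + \psi) \circ \pi^\leq \bigr)$, and that the argument of $\mathcal{S}^{-1}$ is precisely the $\ell$-th order error term produced in \eqref{eq:approx_lemma}, hence lies in $\Gamma_{r,\ell}$. Expanding it as $\widetilde{P}_0 \phi + (\pi^\leq - \widetilde{P}_0)\circ f - A_0(\pi^\leq - \widetilde{P}_0) - \psi \circ \pi^\leq$ and bounding each piece via \eqref{eq:gi_pi_bound} and the smallness of $\Vert \phi \Vert_{C^r_b}$ yields $\Vert \mathcal{T}(0) \Vert_{\Gamma_{r,\ell}} \leq C \Vert \phi \Vert_{C^r_b(B_1^X;X)}$, which is $\leq 1/2$ after further shrinking. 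Combined with the contraction estimate, $\Vert \mathcal{T}(\pi^>) \Vert \leq \Vert \mathcal{T}(0) \Vert + \tfrac{1}{2} \Vert \pi^> \Vert \leq 1$ for $\pi^> \in \overline{B}_1^{\Gamma_{r,\ell}}$.

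For the alternative statement, the same scheme goes through with two substitutions: Lemma~\ref{lemma:Tfrechet} is replaced by the finite-increments estimate \eqref{eq:contraction_replacement} of Remark~\ref{remark:DT_replacement}, which delivers
\begin{displaymath}
\Vert \mathcal{T}(\pi_1^>) - \mathcal{T}(\pi_2^>) \Vert_{\Gamma_{s,\ell}} \leq C \Vert \mathcal{S}^{-1} \Vert_{\mathcal{L}(\Gamma_{s,\ell})} \Vert \psi \Vert_{C^{s+1}_b(B_\rho^{X_0};X_0)} \Vert \pi_1^> - \pi_2^> \Vert_{\Gamma_{s,\ell}},
\end{displaymath}
so contraction follows from smallness of $\Vert \psi \Vert_{C^{s+1}_b}$; and for the self-map property, the hypothesis \eqref{eq:orderL_replacement} again ensures $\mathcal{T}(0) \in \Gamma_{s,\ell}$, while the identity $\pi^\leq \circ f - (A_0 + \psi)\circ \pi^\leq = \widetilde{P}_0 \phi + (\pi^\leq - \widetilde{P}_0) \circ f - A_0(\pi^\leq - \widetilde{P}_0) - \psi \circ \pi^\leq$ combined with the two smallness hypotheses $\Vert \pi^\leq - \widetilde{P}_0 \Vert_{C^{s+1}_b}, \Vert \psi \Vert_{C^{s+1}_b} \ll 1$ (and the already arranged smallness of $\Vert \phi \Vert$ from Lemma~\ref{lemma:S}) bounds $\Vert \mathcal{T}(0) \Vert_{\Gamma_{s,\ell}}$ by $1/2$.

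The main technical hurdle I anticipate is bookkeeping the two smallness requirements for the self-map step in the alternative setup: the cancellation that makes $\mathcal{T}(0)$ land in $\Gamma_{s,\ell}$ is algebraic (coming from \eqref{eq:orderL_replacement}), but to control its $\Gamma_{s,\ell}$-norm one must bound each constituent term's $C^{s+1}_b$ derivatives, which mixes $\Vert \phi \Vert$ (already fixed small by the application of Lemma~\ref{lemma:S}) with the independently prescribed smallness of $\Vert \pi^\leq - \widetilde{P}_0 \Vert$ and $\Vert \psi \Vert$. The $r = \omega$ case requires the convention fixed in Remark~\ref{remark:DT_replacement} to interpret the '$s+1$' index as $\omega$ throughout.
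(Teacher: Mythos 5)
Your proof is correct and takes essentially the same route as the paper: Banach's fixed point theorem, with the contraction estimate supplied by Lemma~\ref{lemma:Tfrechet} (resp.\ Remark~\ref{remark:DT_replacement} for the non-polynomial case) together with the uniform bound on $\Vert\mathcal{S}^{-1}\Vert$, and the self-map estimate from the smallness of $\mathcal{T}(0)$ enforced by shrinking $\Vert\phi\Vert_{C^r_b}$ (resp.\ $\Vert\pi^\leq-\widetilde{P}_0\Vert$, $\Vert\psi\Vert$). Your explicit decomposition $\pi^\leq\circ f - g\circ\pi^\leq = \widetilde{P}_0\phi + (\pi^\leq-\widetilde{P}_0)\circ f - A_0(\pi^\leq-\widetilde{P}_0) - \psi\circ\pi^\leq$ is a useful expansion of what the paper cites more tersely from Lemmas~\ref{lemma:S} and \ref{lemma:approx}; one minor caveat is that the individual summands on the right do not separately lie in $\Gamma_{s,\ell}$ (they do not each vanish to order $\ell$), so the bound on each piece must be taken in $C^{s+1}_b$ (or $C^r_b$ when $r>\ell$), with membership of the sum in $\Gamma_{s,\ell}$ guaranteed separately by \eqref{eq:orderL_replacement} and \eqref{eq:lastpartgamma}, as arranged in Remark~\ref{remark:DT_replacement}.
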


\begin{proof}
    \textit{Uniqueness.} 
    That $\mathcal{T}$ is a contraction in $\overline{B}_1^{\Gamma_{r,\ell}}$ follows from \eqref{eq:DmathcalTbound} and \eqref{eq:gi_pi_bound}. 
    First, we note that there exists $\rho > 0$ such that $(\pi^{\leq} + \pi^>)(B_1^X) \subset B_{\rho/2}^{X_0}$ for all $\pi^> \in \overline{B}_1^{\Gamma_{r,\ell}}$.
    Then, by means of \eqref{eq:gi_bound}, we may ensure that $\Vert \psi \Vert_{C^{\ell}_b(B_\rho^{X_0};X_0)}$ is sufficiently small (via adjusting $\Vert \phi \Vert_{C^r_b(B_1^X;X)}$) so that $\left\Vert D\mathcal{T}(\pi^>) \right\Vert_{\mathcal{L}(\Gamma_{r,\ell})} < 1$ for all $\pi^> \in \overline{B}_1^{\Gamma_{r,\ell}}$ (using Lemma~\ref{lemma:S}; in particular, that $\left\Vert \mathcal{S}^{-1} \right\Vert_{\mathcal{L}(\Gamma_{r,\ell})}$ is independent of further shrinking of $\Vert \phi \Vert_{C^r_b(B_1^X;X)}$ beyond the minimum threshold).

    \textit{Existence.}
    We have that
    \begin{displaymath}
        \left\Vert \mathcal{T}(\pi^>) \right\Vert_{\Gamma_{r,\ell}} \leq \left\Vert \mathcal{T}(0) \right\Vert_{\Gamma_{r,\ell}} + \sup_{y \in [0:\pi^>]} \left\Vert D\mathcal{T}(y) \right\Vert_{\mathcal{L}(\Gamma_{r,\ell})} \left\Vert \pi^> \right\Vert_{\Gamma_{r,\ell}}.
    \end{displaymath}
    If $\pi^> \in \overline{B}_1^{\Gamma_{r,\ell}}$, the last term is strictly less than one by the preceding paragraph.
    The $\Gamma_{r,\ell}$ norm of $\mathcal{T}(0) = \mathcal{S}^{-1}(-g \circ \pi^{\leq} + \pi^{\leq} \circ f)$ is adjustable freely by $\Vert \phi \Vert_{C^r_b(B_1^X;X)}$ (using Lemmas \ref{lemma:S} and \ref{lemma:approx}).
    Thus $\mathcal{T}(\overline{B}_1^{\Gamma_{r,\ell}}) \subset \overline{B}_1^{\Gamma_{r,\ell}}$ can be guaranteed for $\Vert \phi \Vert_{C^r_b(B_1^X;X)}$ sufficiently small.

    The last claim follows upon replacing \eqref{eq:DmathcalTbound} by \eqref{eq:contraction_replacement} in the preceding argument, via the route explained in Remark~\ref{remark:DT_replacement}.  
    The smallness of $\Vert \pi^{\leq} - \widetilde{P}_0 \Vert_{C^{s+1}_b(B_1^{X};X_0)}$ is used to ensure $\mathcal{T}(0)$ is freely adjustable in the $\Gamma_{s,\ell}$ norm.  
\end{proof}

\begin{remark} \label{remark:half_unit_ball}
    Up to shrinking $\Vert \phi \Vert_{C^r_b(B_1^X;X)}$ further (along with $\Vert \pi^{\leq} - \widetilde{P}_0 \Vert_{C^{s+1}_b(B_1^{X};X_0)}$ and $\Vert \psi \Vert_{C^{s+1}_b(B_\rho^{X_0};X_0)}$ in the second case), we may as well set up the contraction mapping on $\overline{B}_\gamma^{\Gamma_{s,\ell}}$ for $0 < \gamma < 1$, since both $\left\Vert D\mathcal{T}(\pi^>) \right\Vert_{\mathcal{L}(\Gamma_{r,\ell})}$ and $\mathcal{T}(0)$ are freely adjustable via these quantities.
\end{remark}

\begin{proof}[Proof of Theorem~\ref{thm:maps}\ref{thm:maps_1}]
For $\delta > 0$, and a map $h$, consider its scaled version
$h_\delta : x \mapsto (1/\delta) h (\delta x)$.
We wish to use $f \mapsto f_\delta$.
A transformation as such leaves the linear part invariant, as $Df_\delta (0) = Df(0) = A$.
In particular, hypotheses \ref{hyp3:ell}-\ref{hyp5:spec2} remain unchanged.
Moreover, 
\begin{displaymath}
    \Vert \phi_\delta \Vert_{C^r_b(B_1^X;X)} = \Vert f_\delta - A \Vert_{C^r_b(B_1^X;X)} = o(1)
\end{displaymath}
as $\delta \to 0$.
Adjusting $\delta$ and hence $\Vert \phi_\delta \Vert_{C^r_b(B_1^X;X)}$ as necessary,
we may apply Lemmas \ref{lemma:approx}-\ref{lemma:contraction} to $f_\delta$ and obtain $\hat{g}$ and $\hat{\pi}$ such that
\begin{displaymath}
    \hat{g} \circ \hat{\pi} = \hat{\pi} \circ f_\delta
\end{displaymath}
holds on $B_1^X$.
Now, upon defining $\pi(x) : =\delta \hat{\pi} (x / \delta) $ and $g(x) := \delta \hat{g} (x / \delta) $, we obtain the desired relationship 
\begin{displaymath}
    g \circ \pi = \pi \circ f
\end{displaymath}
on $B_\delta^X$.
\end{proof}

\begin{remark} \label{remark:scaling} 
The above scaling argument will be used twice more in the following form.
Given $C^k_b$ maps $f: X \supset U \to U$, $\pi^{\leq}:  U \to X_0$ and $g: X_0 \supset O_0 \to X_0$,  
suppose we have obtained a solution $\hat{\pi}^>$ such that
\begin{displaymath}
    g_\delta \circ (\pi^{\leq}_\delta + \hat{\pi}^>) =  (\pi^{\leq}_\delta + \hat{\pi}^>) \circ f_\delta
\end{displaymath}
holds on $B_1^X$,  
for some $\delta > 0$.
Then, if $\pi^> (x): = \delta \hat{\pi}^> (x/\delta)$,
\begin{displaymath}
    g \circ (\pi^\leq + \pi^>) = (\pi^\leq + \pi^>) \circ f
\end{displaymath}
holds on $B_\delta^X$.
\end{remark}

The following lemma is the first step towards establishing the uniqueness claim \ref{thm:maps}\ref{thm:maps_2}.

\begin{lemma} \label{lemma:uniq1} 
    Suppose hypotheses \ref{hyp1:X}-\ref{hyp4:spec1} hold, and let $g$ be a $C^{s+1}_b(O_0;X_0)$ map, $ \ell  \leq s \leq r$ ($r \in \mathbb{N} \cup \{ \omega \}$; if $r = \omega$, $g$ should be $C^\omega_b$), on some neighborhood of $0$, $O_0 \subset X_0$, with $g(0) = 0$ and $Dg(0) = A_0$. 
    Assume moreover either of the following:
    \begin{enumerate}
        \item \ref{hyp5:spec2} holds with $m=2$;
        \item \label{g_uniq_b} We are given a degree-$\ell$ polynomial on $X_0$, $\pi^0 = \sum_{n=2}^\ell \pi^0_n$, such that $(g_n,\pi_{(0,\ldots,0)}) = (D^ng(0),\pi^0_n)$ solves \eqref{eq:order_n_eq1} for all $2 \leq n \leq \ell$.
    \end{enumerate}
    Then, there exists a unique $C^{s}_b$ map $\pi$ with $D\pi(0) \vert_{X_0} = \mathrm{id}_{X_0}$ such that $(g,\pi)$ satisfies \eqref{eq:map_foliation_diag} (with $U$ sufficiently small).

    In case (\ref{g_uniq_b}), uniqueness is only guaranteed within the subclass of functions 
    satisfying $D^n\pi(0) \vert_{X_0} = \pi^0_n$ for $2 \leq n \leq \ell$.
\end{lemma}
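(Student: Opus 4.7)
The plan is to emulate the existence proof structure (Lemmas~\ref{lemma:approx}-\ref{lemma:contraction}), but with $g$ prescribed in place of being a free parameter, and then deduce uniqueness from the contraction mapping principle once both the $\ell$-jet of $\pi$ and the nonlinear fixed-point equation have been pinned down.

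\textbf{Step 1 (Determination of the $\ell$-jet of $\pi$).} Following the inductive derivation of \eqref{eq:order_n_final}, I run the argument of Lemma~\ref{lemma:approx} with $g_n := D^ng(0)$ now fixed by the data, rather than free. Hypothesis \ref{hyp4:spec1} still guarantees that for every $\alpha \in \{0,1\}^n$ with $|\alpha|\geq 1$, the operator $l_{A_0} - r_{A_0}^{\overline{\alpha}} r_{A_1}^{\alpha}$ is boundedly invertible, so the components $\pi_\alpha$ are uniquely determined by induction on decreasing $|\alpha|$. For the remaining component $\alpha=(0,\ldots,0)$, equation \eqref{eq:order_n_eq1} reads $g_n + (l_{A_0}-r_{A_0})\pi_{(0,\ldots,0)} = \widetilde{\Gamma}_{(0,\ldots,0)}$; in case~(1), hypothesis \ref{hyp5:spec2} with $m=2$ makes $l_{A_0} - r_{A_0}$ invertible, so $\pi_{(0,\ldots,0)}$ is uniquely recovered from the prescribed $g_n$; in case~(\ref{g_uniq_b}), $\pi_{(0,\ldots,0)}=\pi^0_n$ is prescribed by hypothesis and compatible with $g_n$ by assumption. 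Reassembling via $\xi$ from \eqref{eq:xi} yields a uniquely determined degree-$\ell$ polynomial $\pi^{\leq}$ with $\pi^{\leq}_1 = \widetilde{P}_0$, satisfying the approximate invariance \eqref{eq:orderL_replacement} with $\psi:=g-A_0$.

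\textbf{Step 2 (Scaling to smallness).} Apply the rescaling $f \mapsto f_\delta$, $g \mapsto g_\delta$, $\pi^{\leq} \mapsto \pi^{\leq}_\delta$ of Remark~\ref{remark:scaling}. The linear parts $A$, $A_0$ and $\widetilde{P}_0$ are invariant under this transformation, so all spectral hypotheses persist; moreover
\begin{displaymath}
    \Vert \phi_\delta \Vert_{C^r_b(B_1^X;X)}, \quad \Vert \psi_\delta \Vert_{C^{s+1}_b(B_\rho^{X_0};X_0)}, \quad \Vert \pi^{\leq}_\delta - \widetilde{P}_0 \Vert_{C^{s+1}_b(B_1^{X};X_0)} = o(1)
\end{displaymath}
as $\delta \to 0$, since each of $\phi$, $\psi$ and $\pi^{\leq}-\widetilde{P}_0$ vanishes to at least second order at $0$. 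Choose $\delta$ small enough that the smallness requirements of the second part of Lemma~\ref{lemma:contraction} (and of Lemma~\ref{lemma:S}) are met.

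\textbf{Step 3 (Contraction).} Writing $\pi = \pi^{\leq} + \pi^>$, the invariance relation $g \circ \pi = \pi \circ f$ is equivalent, via the operator $\mathcal{S}$ of Lemma~\ref{lemma:S}, to the fixed-point equation $\mathcal{T}(\pi^>) = \pi^>$ from \eqref{eq:mathcal_T} with the general (non-polynomial) $\psi$. The second part of Lemma~\ref{lemma:contraction} delivers a unique $\pi^> \in \overline{B}_1^{\Gamma_{s,\ell}}$. Undoing the scaling as in Remark~\ref{remark:scaling}, $\pi := \pi^{\leq} + \pi^>$ is a $C^s_b$ solution on the ball of radius $\delta$ with $D\pi(0)\vert_{X_0} = \mathrm{id}_{X_0}$ (and, in case~(\ref{g_uniq_b}), with the prescribed higher jets).

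\textbf{Step 4 (Global uniqueness within the class).} If $\tilde{\pi}$ is any other $C^s_b$ solution with $D\tilde\pi(0)\vert_{X_0}=\mathrm{id}_{X_0}$ (and, in case~(\ref{g_uniq_b}), the prescribed $D^n\tilde\pi(0)\vert_{X_0}=\pi^0_n$), then Step~1 forces $\tilde\pi$ to agree with $\pi^{\leq}$ up to order $\ell$, so $\tilde\pi - \pi^{\leq} \in \Gamma_{s,\ell}$ on a sufficiently small ball; after rescaling it lies in $\overline{B}_1^{\Gamma_{s,\ell}}$ and solves the same fixed-point equation $\mathcal{T}(\cdot)=\cdot$. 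Uniqueness in Lemma~\ref{lemma:contraction} identifies it with $\pi^>$, and rescaling back gives $\tilde\pi = \pi$ on a neighborhood of $0$.

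\textbf{Anticipated obstacle.} The only delicate point is \emph{compatibility} of the prescribed data with the jet equations. In case~(1), \ref{hyp5:spec2} guarantees solvability of \eqref{eq:order_n_eq1} for any $g_n$; in case~(\ref{g_uniq_b}), the assumption that $(D^n g(0), \pi^0_n)$ solves \eqref{eq:order_n_eq1} is built in. Beyond this, everything reduces to Lemmas~\ref{lemma:S} and \ref{lemma:contraction} together with the small-norm scaling of Remark~\ref{remark:scaling}.
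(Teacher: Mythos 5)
Your proposal is correct and takes essentially the same route as the paper: determine the $\ell$-jet $\pi^{\leq}$ uniquely from the prescribed $g_n = D^n g(0)$ (this is precisely what Remark~\ref{remark:uniqueness_approxlemma} records), rescale so that the smallness hypotheses of Lemma~\ref{lemma:S} and the second part of Lemma~\ref{lemma:contraction} hold (via Remark~\ref{remark:DT_replacement}), and extract the unique fixed point $\pi^>$ in $\overline{B}_1^{\Gamma_{s,\ell}}$. Your Step~4 simply makes explicit the uniqueness mechanism that the paper condenses into ``Remark~\ref{remark:scaling} now finishes the proof.''
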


\begin{proof}

    In either setting, the solutions to \eqref{eq:order_n_eq1} are uniquely determined upon inserting $g_n = D^ng(0)$.
    Hence, Lemma~\ref{lemma:approx} gives a unique degree-$\ell$ polynomial $\pi^{\leq}$, with $\pi^{\leq}(0) = 0$ and $D\pi^\leq (0) = \widetilde{P}_0$, such that \eqref{eq:approx_lemma} holds (see Remark~\ref{remark:uniqueness_approxlemma}). 
    
    Once more, via the scaling argument it can be ensured that Lemma~\ref{lemma:S} is applicable on $\Gamma_{s,\ell}$.
    The same scaling argument can be used to adjust the $C^{s+1}_b$ norms of $\psi = g - A_0$ and $\pi^\leq-\widetilde{P}_0$ (see Remark~\ref{remark:scaling}) to match the assumptions of Lemma~\ref{lemma:contraction}. 
    In particular, 
    \begin{displaymath}
        \sup_{x \in B_1^X} | \pi^\leq_\delta(x) | \leq  \Vert D \pi^\leq (0) \Vert_{\mathcal{L}(X;X_0)} + o(1)
    \end{displaymath}
    as the scaling parameter $\delta$ tends to zero, and hence there exists a finite $\rho > 0$ for which $(\pi^\leq_\delta + \hat{\pi}^>)(B_1^X) \subset B_{\rho/2}^{X_0}$ for all $\hat{\pi}^> \in \overline{B}_1^{\Gamma_{s,\ell}}$.
    Up to further shrinking $\delta$, we may assume $\psi_\delta$ is defined and sufficiently small in the $C^{s+1}_b$ norm on $ B_{\rho}^{X_0}$, and similarly for $\Vert \pi^{\leq}_\delta - \widetilde{P}_0 \Vert_{C^{s+1}_b(B_1^{X};X_0)}$.
    Then, via Lemma~\ref{lemma:contraction} (and Remark~\ref{remark:DT_replacement}) we obtain a unique $\hat{\pi}^> \in \overline{B}^{\Gamma_{s,\ell}}_1$ such that
    \begin{displaymath}
        A_0 \hat{\pi}^> - \hat{\pi}^> \circ f = - \psi_\delta \circ (\pi^{\leq}_\delta + \hat{\pi}^>) + \pi^{\leq}_\delta \circ f_\delta - A_0 \pi^{\leq}_\delta
    \end{displaymath}
    holds on $B_1^X$.
    Remark~\ref{remark:scaling} now finishes the proof. 
\end{proof}

A particular corollary of the above lemma (and Remark~\ref{remark:linearpart}) is that linearizing semiconjugacies are uniquely determined by $D \pi (0)$ (cf.\ Theorem~1 of \cite{kvalheim2021existence}).

\begin{proof}[Proof of the $r = \infty$ case of Theorem~\ref{thm:maps}\ref{thm:maps_1}]
By assumption, we have $f \in C^\infty_b$.  
Let $g$ and $\pi^\leq$ be fixed as the polynomials obtained in Lemma~\ref{lemma:approx}, and let $\pi = \pi^\leq + \pi^>$ denote the full $C^{\ell+1}_b$ solution.

We may find $\rho > 0$ such that $g$ is a $C^\infty_b$ diffeomorphism on $B_{\rho}^{X_0}$.
According to the above procedure, we may construct a $C^r_b$ solution  $\pi_r = \pi^\leq +\pi_r^>$ for each $r \geq \ell +1 $.
This solution satisfies \eqref{eq:map_foliation_diag} on $B_{\delta_r}^X$, for some $\delta_r > 0$ -- we may assume $\delta_r$ has been chosen small enough so that $B_{\delta_r}^X \subset U$ and hence $\pi \vert_{B_{\delta_r}^X} = \pi_r$, where $U$ is the neighborhood on which uniqueness is satisfied from Lemma~\ref{lemma:uniq1} applied to the $C^{\ell+1}_b$ case.
Take $\gamma > 0$ small enough such that $B^{X}_{\gamma} \subset \pi^{-1} ( B_\rho^{X_0} )$, and choose $j_r \in \mathbb{N}$ large enough to satisfy
$f^{j_r}( B^{X}_{\gamma} ) \subset B_{\delta_r}^X$.
Then,
\begin{equation}
    \hat{\pi}_r : = g^{-j_r} \circ \pi_r \circ f^{j_r}
    \label{eq:extension}
\end{equation}
is a $C^r_b$ extension of $\pi_r$ to $B^{X}_{\gamma}$.
Note that \eqref{eq:extension} is well defined since $\pi_r \circ f^{j_r} ( B^{X}_{\gamma} ) = \pi \circ f^{j_r} ( B^{X}_{\gamma}  ) \subset g^{j_r} (B_\rho^{X_0})$, using the fact that $\pi_r = \pi$ on $B_{\delta_r}^X$, that $f(B^{X}_{\gamma} ) \subset B^{X}_{\gamma} $, and that $\pi \circ f = g \circ \pi$ holds on $B^{X}_{\gamma}$.
Applying the uniqueness claim once more to \eqref{eq:extension} (noting that the $\ell$-jet of $\hat{\pi}_r$ agrees with $\pi^\leq$), we find a uniform neighborhood $B^{X}_{\gamma} \cap U$ on which $\pi = \hat{\pi}_r $ for all $r$. 
We have thus shown that $\pi$ is $C^r_b$ on $B^{X}_{\gamma} \cap U$ for all $r \in \mathbb{N}$, i.e., $\pi \in C^\infty_b$. 
\end{proof}

\begin{proof}[Proof of Theorem~\ref{thm:maps}\ref{thm:maps_2}]
    Suppose we are given two $C^{k}_b$, $\ell +1 \leq k \leq r$, pairs of solutions to \eqref{eq:map_foliation_diag}, $(g,\pi)$ and $(\tilde{g},\tilde{\pi})$, as in the statement, and assume for now that $r \in \mathbb{N} \cup \{ \infty \}$.
    We may assume $D\pi(0) \vert_{X_0} = D\tilde{\pi}(0) \vert_{X_0} = \mathrm{id}_{X_0}$ (and hence also $Dg(0) = D\tilde{g}(0)= A_0$) by considering 
    \begin{equation}
        \left((D\pi(0) \vert_{X_0})^{-1} g \circ D\pi(0) \vert_{X_0},( D\pi(0) \vert_{X_0})^{-1} \pi \right)
        \label{eq:lastproof_firststep}
    \end{equation}
    in place of $(g,\pi)$, and similarly for $(\tilde{g},\tilde{\pi})$.
    We may also assume that $\pi$ and $\tilde{\pi}$ are defined on the same ball around the origin, as the assertion is local.

    Since $D\pi(0) \vert_{X_0} = D\tilde{\pi}(0) \vert_{X_0} = \mathrm{id}_{X_0}$, there exists a neighborhood $U_0$ of $0$ in $X_0$ such that both $\pi \vert_{U_0}$ and $\tilde{\pi} \vert_{U_0}$ are $C^{k}_b$ diffeomorphisms onto their images.
    Let $\tilde{\theta} : = \tilde{\pi} \vert_{U_0} \circ ( \pi \vert_{U_0})^{-1}$.
    It is clear that $(\tilde{\theta} \circ g \circ \tilde{\theta}^{-1}, \tilde{\theta} \circ \pi)$ is another solution to \eqref{eq:map_foliation_diag} (Remark~\ref{remark:diffeo}).
    It is a priori unclear whether this agrees with $(\tilde{g},\tilde{\pi})$, but we have by $\tilde{\theta} \circ \pi \vert_{U_0} = \tilde{\pi} \vert_{U_0}$ and Remark~\ref{remark:uniqueness_approxlemma} that 
    \begin{displaymath}
       D^n ( \tilde{\theta} \circ g \circ \tilde{\theta}^{-1}) (0)  = D^n \tilde{g} (0), \qquad 0 \leq n \leq \ell,
    \end{displaymath} 
    (cases $n=0,1$ are by assumption), which in turn implies\footnote{This claim is not entirely obvious, but it follows from a rather uneventful, straightforward computation, which is hence omitted.}
    \begin{equation}
        D^n ( \tilde{\theta} \circ g ) (0)  = D^n ( \tilde{g} \circ \tilde{\theta} ) (0), \qquad 0 \leq n \leq \ell.
        \label{eq:g_jet}
    \end{equation}

    We seek the unknown diffeomorphism in \eqref{eq:uniqueness} as $\theta = \tilde{\theta} + h $ for some higher order function $h$ on $X_0$ to be determined. 
    In particular, $h$ should satisfy
    \begin{equation} 
         \tilde{g} \circ (\tilde{\theta} +  h)  =  (\tilde{\theta} + h ) \circ g .
       \label{eq:uniq_h}
    \end{equation}
    We shall treat this problem similarly to \eqref{eq:map_foliation_diag}.
    Let
    \begin{displaymath}
        {\mathcal{S}} \hat{h} = A_0 \hat{h} - \hat{h} \circ  g_\delta 
    \end{displaymath}
    denote the linear part of the scaled version of \eqref{eq:uniq_h} (analogously to \eqref{eq:S}). 
    Note that $Dg_\delta(0) = A_0$ and \eqref{eq:ell2} holds (trivially) with $A_0$ in place of $A$. 
    We may choose $\delta$ such that that $g_\delta-A_0$ is sufficiently small in the $C^{\ell+1}_b(B_1^{X_0};X_0)$ norm, and hence 
    the assumptions of Lemma~\ref{lemma:S} are  satisfied (with $X,Y = X_0$).
    Therefore, ${\mathcal{S}}$ is an automorphism of $\Gamma_{\ell,\ell}(B_1^{X_0};X_0)$, where $\ell$ is fixed as in \ref{hyp3:ell} (for later use, so that $h$ will vanish to sufficiently high order). 
    We repeat the scaling argument from the proof of Lemma~\ref{lemma:uniq1}.
    In particular,
    $\Vert \tilde{\theta}_\delta \Vert_{C^0_b(B_1^{X_0};X_0)} \leq 1 + o(1) $ as the scaling parameter $\delta$ tends to zero, and hence, we may choose some $\rho > 4$ and adjust $\delta$ so that $(\tilde{\theta}_\delta + \hat{h}) (B_1^{X_0}) \subset B_{\rho/2}^{X_0}$ for all $\hat{h} \in \overline{B}_{1/4}^{\Gamma_{\ell,\ell}}$, then, up to perhaps further shrinking $\delta$, we may assume $\tilde{\psi}_\delta = \tilde{g}_\delta - A_0$ is sufficiently small on $B_\rho^{X_0}$; $\tilde{\theta}_\delta - \mathrm{id}_{X_0}$ and $g_\delta-A_0$ are sufficiently small on $B_1^{X_0}$ in the $C^{\ell+1}_b$ norm, to the degree that allows us to set up the contraction mapping on $\overline{B}_{1/4}^{\Gamma_{\ell,\ell}}$ (see Remark~\ref{remark:half_unit_ball}). 
    By \eqref{eq:g_jet}, we have that the pair $(\tilde{\psi}_\delta,\tilde{\theta}_\delta)$ satisfies
    \begin{displaymath}
        (A_0 + \tilde{\psi}_\delta ) \circ \tilde{\theta}_\delta (x) = \tilde{\theta}_\delta \circ g_\delta (x) +o(|x|^\ell)
    \end{displaymath}
    as $x \to 0$
    (in place of \eqref{eq:orderL_replacement}), and hence the assumptions of Lemma~\ref{lemma:contraction} are met with $X = X_0$ and $s = \ell$.
    We are thus provided with a unique fixed point $\hat{h} \in \overline{B}_{1/4}^{\Gamma_{\ell,\ell}(B_{1}^{X_0};X_0)}$ of the 
    contraction mapping
    \begin{displaymath}
        {\mathcal{T}}(\hat{h})  :=  {\mathcal{S}}^{-1} ( -\tilde{\psi}_\delta \circ (\tilde{\theta}_\delta +  \hat{h}) + \tilde{\theta}_\delta \circ g_\delta  - A_0 \tilde{\theta}_\delta ).
    \end{displaymath}

    It can always be ensured that $\tilde{\theta}_\delta +  \hat{h}$ is a $C^{\ell}_b$ diffeomorphism from $B_{1/2}^{X_0}$ to its image, up to adjustments in $\delta$ depending solely on $\Vert \tilde{\theta} - \mathrm{id}_{X_0} \Vert_{C^{\ell+1}_b}$.
    Indeed, considering $ \chi(y_0,x_0)  : = y_0 + x_0 - \tilde{\theta}_\delta(x_0) -  \hat{h}(x_0)$, we can guarantee that
    \begin{displaymath}
        \sup_{y_0 \in \overline{B}_{1/4}^{X_0}, x_0 \in \overline{B}_{1/2}^{X_0}} \Vert D_{x_0} \chi(y_0,x_0) \Vert_{\mathcal{L}(X_0)} \leq \Vert \tilde{\theta}_\delta - \mathrm{id}_{X_0} \Vert_{C^{\ell+1}_b(B_1^{X_0};X_0)} + \Vert  \hat{h} \Vert_{\Gamma_{\ell,\ell}(B_1^{X_0};X_0)} < \frac12,
    \end{displaymath}
    by choosing $\delta$ such that $\Vert \tilde{\theta}_\delta - \mathrm{id}_{X_0} \Vert_{C^{\ell+1}_b(B_1^{X_0};X_0)} < 1/4$.
    Hence, $\chi : \overline{B}_{1/4}^{X_0} \times \overline{B}_{1/2}^{X_0} \to \overline{B}_{1/2}^{X_0}$ 
    is a uniform contraction on the second factor.
    By Theorem~21 of \cite{Irwin72}, $\tilde{\theta}_\delta +  \hat{h}$ 
    is a $C^{\ell}_b$ diffeomorphism on $B^{X_0}_{1/2} \cap (\tilde{\theta}_\delta +  \hat{h})^{-1} (B^{X_0}_{1/4}) \supset B^{X_0}_{1/6}$. 
    
    Via Remark~\ref{remark:scaling}, we have thus obtained $\theta = \tilde{\theta}+h $, a $C^{\ell}_b$ diffeomorphism on $B_{\delta/6}^{X_0}$,
    such that
    \begin{equation}
        \tilde{g} = \theta \circ g \circ \theta^{-1}
        \label{eq:g_conj}
    \end{equation}
    holds on $\theta (B_{\delta/6}^{X_0})$. 
    We emphasize once more that $\delta$, and hence the neighborhood on which \eqref{eq:g_conj} holds, was determined by the original sizes of $\Vert g - A_0\Vert_{C^{\ell+1}_b}$, $\Vert \tilde{g} - A_0 \Vert_{C^{\ell+1}_b}$ and $\Vert \tilde{\theta} - \mathrm{id}_{X_0} \Vert_{C^{\ell+1}_b}$.

    Now $(\theta \circ g \circ \theta^{-1},\theta \circ \pi)$ is yet another solution of \eqref{eq:map_foliation_diag}, with $\theta \circ g \circ \theta^{-1} =  \tilde{g} \in C^{k}_b$ and $\theta \circ \pi \in C^{\ell}_b$.
    It remains to check that it agrees with $(\tilde{g},\tilde{\pi})$.
    By Lemma~\ref{lemma:uniq1} and \eqref{eq:g_conj} it suffices to check that
    \begin{displaymath}
        D^n \tilde{\pi} (0) \vert_{X_0} = D^n (\theta \circ \pi) (0) \vert_{X_0}, \qquad 2 \leq n \leq \ell;
    \end{displaymath}
    but this is obvious, since
    \begin{align*}
        D^n (\theta \circ \pi) (0) \vert_{X_0} &= D^n (\tilde{\theta} \circ \pi) (0)  \vert_{X_0} + D^n (h \circ \pi) (0)\vert_{X_0} \\
        &= D^n (\tilde{\theta} \circ \pi \vert_{X_0}) (0)  \\
        &= D^n \tilde{\pi} (0) \vert_{X_0}.
    \end{align*}
    for $n \leq \ell$.

    If $U \subset X$ is the neighborhood from Lemma~\ref{lemma:uniq1} on which uniqueness of $\tilde{\pi}$ holds (depending only on  $\Vert \tilde{g} - A_0 \Vert_{C^{\ell+1}_b}$ and $\Vert f- A \Vert_{C^{\ell+1}_b}$), then $\tilde{\pi} = \theta \circ \pi$ holds on $\widetilde{U} = U \cap \pi^{-1}(B_{\delta/6}^{X_0})$. 
    It follows, with $\widetilde{U}_0 =\widetilde{U} \cap U_0$, that 
    \begin{displaymath}
        \theta \vert_{\pi(\widetilde{U}_0)}= \tilde{\pi} \vert_{\widetilde{U}_0} \circ ( \pi \vert_{\widetilde{U}_0})^{-1},
    \end{displaymath}
    (i.e., in retrospect, $h = 0$) and hence $\theta$ is (locally) uniquely determined and $C^{k}_b$. 
    The uniqueness claim \eqref{eq:uniqueness} follows with $W = \pi^{-1}(\pi(\widetilde{U}_0)) \cap \widetilde{U}$.

    To see the final claim, if $g$ and $\tilde{g}$ are both linear, then, after the first step \eqref{eq:lastproof_firststep}, we may immediately apply Lemma~\ref{lemma:uniq1} (with case (\ref{g_uniq_b})) to conclude $\theta = D\tilde{\pi}(0)\vert_{X_0} (D \pi(0)\vert_{X_0})^{-1} \in \mathrm{Aut}(X_0)$. 

    The same procedure works for the $r = \omega$ case, with the appropriate modifications (e.g., the contraction mapping $\mathcal{T}$ above should be established on the space  $\Gamma_{\omega,\ell}(B_1^{X_0};X_0)$ instead of $\Gamma_{\ell,\ell}(B_1^{X_0};X_0)$).
\end{proof}

\begin{proof}[Proof of Theorem~\ref{thm:sf}] 
The supposed $X_1$-invariance of $D \varphi_t(0)$ implies that $t \mapsto \widetilde{P}_0D\varphi_t(0) \imath_0$ is a semigroup on $X_0$.
Indeed, for $t,s \geq 0$,
\begin{align*}
    \widetilde{P}_0D\varphi_{t+s}(0) \imath_0 &= \widetilde{P}_0D\varphi_{t}(0) D\varphi_{s}(0) \imath_0  \\
    &= \widetilde{P}_0D\varphi_{t}(0) (P_0 + P_1) D\varphi_{s}(0) \imath_0 \\
    &= \widetilde{P}_0D\varphi_{t}(0)  \imath_0 \widetilde{P}_0  D\varphi_{s}(0) \imath_0,
\end{align*}
since $\widetilde{P}_0D\varphi_{t}(0)  P_1 = 0$ for all $t \geq 0$.
The hypothesis $X_0 \subset \mathrm{dom}(G)$ now implies that $t \mapsto \widetilde{P}_0D\varphi_t(0) \imath_0$ has a bounded generator $\widetilde{P}_0 G \imath_0$ such that $\widetilde{P}_0D\varphi_t(0) \imath_0 = e^{t \widetilde{P}_0 G \imath_0}$, and hence it extends to define a group (of operators on $X_0$), with $\widetilde{P}_0D\varphi_t(0) \imath_0 \in \mathrm{Aut}(X_0)$ for each $t \in \mathbb{R}$
(see Lemma 7.1.18, \cite{buhler2018functional}). 
In particular, $0 \notin \sigma(\widetilde{P}_0D\varphi_\tau(0) \imath_0)$.

This latter observation combined with the remainder of the assumptions ensure that we may 
apply Theorem~\ref{thm:maps} to $\varphi_{\tau} \vert_{\mathcal{D}_{\tau}^\varphi \cap O}: \mathcal{D}^\varphi_{\tau} \cap O  \to X$ and hence obtain $W \subset \mathcal{D}^\varphi_{\tau} \cap O$, an open neighborhood of $0$ that is mapped to itself by $\varphi_\tau$, and the commuting diagram of $C^r_b$ maps 
\begin{equation}
\begin{tikzcd}
W \arrow[r, "\varphi_\tau"] \arrow[d, "\pi"']
& W \arrow[d, "\pi"] \\
X_0 \arrow[r, "g"']
&  X_0
\end{tikzcd}
\label{eq:diag_sfproof}
\end{equation}
Here, we may choose $D\pi(0) = \widetilde{P}_0$ (Remark~\ref{remark:linearpart}).

We intend to use the uniqueness claim (part~\ref{thm:maps_2}) of Theorem~\ref{thm:maps} to obtain the semiflow $\vartheta_t$ in \eqref{eq:sf_foliation_diag}.
By Lyapunov stability of $0$, we may choose $\widetilde{W} \subset W$ such that $\varphi_s(\widetilde{W}) \subset W$ for all $s \geq 0$ and $\varphi_\tau (\widetilde{W}) \subset \widetilde{W}$ (using Remark~\ref{remark:adapt} and the scaling argument introduced in the proof of Theorem~\ref{thm:maps}).
Thus, we may precompose \eqref{eq:diag_sfproof} by $\varphi_s \vert_{\widetilde{W}}$ to obtain
\begin{displaymath}
    g \circ \pi \circ \varphi_s = \pi \circ \varphi_\tau \circ \varphi_s 
    = \pi \circ \varphi_s \circ \varphi_\tau,
\end{displaymath}
which holds on $\widetilde{W}$,
and hence 
\begin{equation} \label{eq:sfproof_diag2}
\begin{tikzcd}
\widetilde{W} \arrow[r, "\varphi_\tau"] \arrow[d, "\pi \circ \varphi_s"']
& \widetilde{W} \arrow[d, "\pi \circ \varphi_s"] \\
X_0 \arrow[r, "g"']
&  X_0
\end{tikzcd} 
\end{equation}
is another candidate for a $C^r_b$ solution of \eqref{eq:map_foliation_diag} with $f = \varphi_\tau$.
That $\pi \circ \varphi_s (0) = 0$ is given by assumption, 
and $D (\pi \circ \varphi_s)(0) \vert_{X_0} = \widetilde{P}_0 D \varphi_s (0) \imath_0 \in \mathrm{Aut}(X_0)$ has already been established.
The conditions to apply the uniqueness claim are hence met, but 
we must make sure that the neighborhood on which
the conjugacy \eqref{eq:uniqueness}
holds can be chosen uniformly with respect to $s$ in order to meet the requirements posed by \eqref{eq:sf_foliation_diag}.

Fix some $0 < \varepsilon < 1/(2 \sup_{s \in [0,2\tau]} \Vert e^{-\widetilde{P}_0 G \imath_0 s}\Vert_{\mathcal{L}(X_0)})$. 
Denote by $\eta_s : = \pi \circ \varphi_s \circ \imath_0$.
Choose $\gamma > 0$ such that, for $t,s \in [0,2\tau]$, $| t-s | < \gamma$ implies $\Vert D^i \eta_t(0) - D^i \eta_s(0) \Vert_{\mathrm{M}_i(X_0;X_0)} < \varepsilon/2$ for all $i = 0,\ldots,\ell+1$. 
By the joint continuity part of assumption \ref{hypA2}, for any $t \in [0,2\tau]$, there exist $\gamma_t < \gamma$ and $\delta_t > 0$ such that\footnote{Here, $\tilde{\imath}_0$ is the inclusion $\mathbb{R}^{\geq 0} \times X_0 \xhookrightarrow{} \mathbb{R}^{\geq 0} \times X$.}
\begin{displaymath}
    D^i(\pi \circ \varphi \circ \tilde{\imath}_0) \big( (t-\gamma_t, t + \gamma_t) \times B_{\delta_t}^{X_0} \big) \subset B^{\mathrm{M}_i(X_0;X_0)}_{\varepsilon/2}(D^i\eta_t(0)), \qquad 0 \leq i \leq \ell+1. 
\end{displaymath} 
There exist a finite set of $t_j$, $j = 1, \ldots, n$, such that the collection $I_j = (t_j-\gamma_{t_j},t_j+\gamma_{t_j})$, $j = 1, \ldots, n$, covers $[0,2\tau]$. 
Setting $\delta := \min_{j=1,\ldots,n} \delta_{t_j}$, 
we have
\begin{align*}
    \sup_{(s,x_0) \in [0,2\tau] \times B_\delta^{X_0}  } \Vert D^i \eta_s (x_0) - &D^i \eta_s (0) \Vert_{\mathrm{M}_i(X_0;X_0)}    \\
      &\leq \max_{j=1,\ldots,n} \sup_{(s,x_0) \in I_j \times B_\delta^{X_0}  } \Vert D^i \eta_s (x_0) - D^i \eta_{t_j} (0) \Vert_{\mathrm{M}_i(X_0;X_0)}  \\
      &+ \max_{j=1,\ldots,n} \sup_{s \in I_j} \Vert D^i \eta_{t_j} (0) - D^i \eta_s (0) \Vert_{\mathrm{M}_i(X_0;X_0)} < \varepsilon
\end{align*} 
for all $i = 0,\ldots,\ell+1$. 
Consequently,
\begin{displaymath}
    \sup_{s \in [0,2\tau]} \Vert \eta_s \Vert_{C^{\ell+1}_b(B_\delta^{X_0};X_0)} \leq \sup_{s \in [0,2\tau], 0 \leq i \leq \ell+1} \Vert D^i \eta_s(0) \Vert_{\mathrm{M}_i(X_0;X_0)} + \varepsilon,
\end{displaymath}
i.e., $\pi \circ \varphi_s \vert_{B^{X_0}_\delta}$ can be uniformly bounded in the $C^{\ell+1}_b(B_\delta^{X_0};X_0)$ norm with respect to $s \in [0,2\tau]$.

Restricted to the domain $B^{X_0}_\delta \cap \eta_s^{-1}(B^{X_0}_{\delta/2}) \supset B^{X_0}_{\delta/3}$, we have that $\pi \circ \varphi_s$
is a $C^r_b$ diffeomorphism across all $s \in [0,2\tau]$.
This follows trivially from Theorem~21 of \cite{Irwin72} applied to the map
$ \chi(s,y_0,x_0)  : = y_0 + x_0 - (D \eta_s (0))^{-1} \eta_s (x_0)$, $[0,2\tau] \times \overline{B}^{X_0}_{\delta/2} \times \overline{B}^{X_0}_{\delta} \to \overline{B}^{X_0}_{\delta}$, which, by the previous discussion is a uniform contraction on the third factor: 
\begin{align*}
    \sup_{(s,y_0,x_0) \in [0,2\tau] \times \overline{B}^{X_0}_{\delta/2} \times \overline{B}^{X_0}_{\delta}} & \Vert D_{x_0} \chi (s,y_0,x_0) \Vert_{\mathcal{L}(X_0)} \\
   & = \sup_{(s,x_0) \in [0,2\tau] \times \overline{B}^{X_0}_{\delta}}  \left\Vert \mathrm{id}_{X_0} - e^{-\widetilde{P}_0 G \imath_0 s} D \eta_s (x_0) \right\Vert_{\mathcal{L}(X_0)} \\
    &\leq \sup_{(s,x_0) \in [0,2\tau] \times \overline{B}^{X_0}_{\delta}}  \left\Vert e^{-\widetilde{P}_0 G \imath_0 s} \right\Vert_{\mathcal{L}(X_0)} \left\Vert  e^{\widetilde{P}_0 G \imath_0 s} -  D \eta_s(x_0)  \right\Vert_{\mathcal{L}(X_0)} < \frac{1}{2}, 
\end{align*}
where the last inequality follows by the choice of $\varepsilon$ and $\delta$.

These two observations ensure that we may apply the uniqueness claim of Theorem~\ref{thm:maps}\ref{thm:maps_2} between two solution pairs of \eqref{eq:sfproof_diag2}, $(g,\pi)$ and $(g,\pi \circ \varphi_s)$, 
uniformly with respect to $s \in [0,2\tau]$, with $U_0 = B_{\delta/3}^{X_0} \cap \widetilde{W}$. 
Hence, we have obtained a neighborhood, say $U \subset \widetilde{W}$, on which 
\begin{equation}
    \pi \circ \varphi_s = \vartheta_s \circ \pi, \qquad s \in [0,2\tau],
    \label{eq:varphi_s_invariance}
\end{equation}
for a   
unique $C^r_b$ diffeomorphism $\vartheta_s$ on $\pi(U)$, which is given explicitly by
\begin{equation}
        \vartheta_s = \pi \circ \varphi_s \vert_{\widetilde{U}_0} \circ ( \pi \vert_{\widetilde{U}_0} )^{-1}, \qquad s \in [0,2\tau],
        \label{eq:theta_explicit}
\end{equation}
on $\pi (U)$, for some  $\widetilde{U}_0 \subset B_{\delta/3}^{X_0}$,
according to the proof of Theorem~\ref{thm:maps}\ref{thm:maps_2}. 
(Note that $\vartheta_\tau =g$ by uniqueness on $\pi(U)$; moreover, $\vartheta_\tau =g$ commutes with $\vartheta_s$, $s \in [0,2\tau]$ by \eqref{eq:g_uniqueness} on $\pi(U)$.) 
For larger times, $\vartheta_s$ will be defined via iterating $g$; we merely use \eqref{eq:varphi_s_invariance} to define $\vartheta_s$ over $[0,\tau]$.
The two definitions of course coincide on overlaps due to uniqueness, but the point in considering \eqref{eq:varphi_s_invariance} over $[0,2\tau]$ is to facilitate the proof of the semiflow property below.

For an arbitrary time $s > 0$, set 
\begin{equation}
    \vartheta_s: = \vartheta_{s-k\tau} \circ g^k = \vartheta_{s-k\tau} \circ \vartheta_\tau^k  
    \label{eq:theta_t_stacking}
\end{equation}
on $\pi(U)$,
where $k$ is the unique integer such that $k\tau \leq s < (k+1) \tau$.
Note \eqref{eq:theta_t_stacking} is well-defined (as it can be ensured, up to possibly shrinking $U$, that $\vartheta_\tau =g$ leaves $\pi(U)$ invariant by Remark~\ref{remark:adapt})  
and a $C^r_b$ diffeomorphism from $\pi(U)$ onto its image.  
Note that, through \eqref{eq:theta_explicit} and \eqref{eq:theta_t_stacking}, $\vartheta$ inherits the smoothness properties of $\varphi \vert_{\mathcal{D}^\varphi \cap (\mathbb{R}^{\geq 0} \times X_0)}$.

Choose $V \subset U$ such that $\varphi_s( V) \subset U$ for all $s \geq 0$ -- this will be the domain we show the semiflow property on. 
Via definition \eqref{eq:theta_t_stacking}, it is clear that
\begin{displaymath}
    \pi \circ \varphi_s = \pi \circ \varphi_\tau^k \circ \varphi_{s-k\tau} = g^k \circ \vartheta_{s-k\tau} \circ \pi = \vartheta_s \circ \pi
\end{displaymath}
on $V$,  
extending \eqref{eq:varphi_s_invariance} to all $s \geq 0$.
For $ 0 \leq t,s  < \tau$, we have
\begin{displaymath}
    \pi \circ \varphi_{t+s} = \vartheta_{t+s} \circ \pi \qquad \text{on } V,
\end{displaymath}
 and 
\begin{displaymath}
    \pi \circ \varphi_{t+s} = \pi \circ \varphi_t \circ \varphi_s = \vartheta_{t} \circ \vartheta_s \circ \pi \qquad \text{on } V,
\end{displaymath}
and hence by the uniqueness property of \eqref{eq:varphi_s_invariance}, $\vartheta_{t+s} = \vartheta_t \circ \vartheta_s$ on $\pi(V)$.
If $t,s \geq \tau$, the same continues to hold via  definition \eqref{eq:theta_t_stacking} and the fact that $g$ and $\vartheta_q$ commute on $\pi(U)$ for $q \leq \tau$.

We have thus far shown that $\vartheta$ defines a local semiflow over $\mathbb{R}^{\geq 0} \times \pi(V)$, satisfying \eqref{eq:sf_foliation_diag}. 
To extend $\vartheta$ to a local flow, set
\begin{equation}
     \vartheta_{-s} : = (\vartheta_s)^{-1}:\vartheta_s(\pi(U)) \to \pi(U), \qquad \text{for } s \geq 0.
     \label{eq:theta_-s}
\end{equation}

There are two cases to consider for the flow property.
In the case $s+t > 0$, $s < 0$, we may assume that $s+t < \tau$ via the semiflow property. 
Then, 
\begin{displaymath}
    \vartheta_{-s} \circ \pi \circ \varphi_{t+s} = \pi \circ \varphi_{-s} \circ \varphi_{t+s} = \pi \circ \varphi_t = \vartheta_t \circ \pi 
\end{displaymath}
holds on $V$, and hence, by uniqueness, $\vartheta_{t+s} = \vartheta_t \circ \vartheta_s$ on $\pi(V)$.
If $s+t < 0$, then 
\begin{displaymath}
    \vartheta_{t+s} = \vartheta_{-(s+t)}^{-1} = (\vartheta_{-s} \circ \vartheta_{-t})^{-1} = \vartheta_t \circ \vartheta_s
\end{displaymath}
on $\vartheta_{-(t+s)}(\pi(V))$.

We have thus shown that on 
\begin{equation}
  \mathcal{D}^\vartheta  = \left( \bigcup_{s < 0} \{ s \} \times \vartheta_{-s}( \pi(V) )\right) \cup \mathbb{R}^{\geq 0} \times \pi(V), 
    \label{eq:thetasetdef}
\end{equation}
$\vartheta$ satisfies the flow property.
The set $\mathcal{D}^\vartheta \cap (\mathbb{R}^{\leq 0} \times X_0)$ is the preimage of $\mathbb{R}^{\geq 0} \times \pi(V)$ under the map 
\begin{gather*}
    \Psi : \left( \bigcup_{s \leq 0} \{ s \} \times \vartheta_{-s}( \pi(U) )\right) \to  \mathbb{R}^{\geq 0} \times \pi(U) \\
    (s,x) \mapsto (-s, \vartheta_s (x)). 
\end{gather*}
If $\varphi \vert_{\mathcal{D}^\varphi \cap (\mathbb{R}^{\geq 0} \times X_0)}$ is jointly continuous, then so is $\Psi$, since the inverse map, $\vartheta_s$ with $s \leq 0$, enjoys the same joint smoothness properties as $\varphi \vert_{\mathcal{D}^\varphi \cap (\mathbb{R}^{\geq 0} \times X_0)}$ through the definition of $\chi$ above and Theorem~21 of \cite{Irwin72}.
Hence, in this case, $\mathcal{D}^\vartheta$ is open. 
As indicated above, joint $C^k$-smoothness of $\varphi \vert_{\mathcal{D}^\varphi \cap (\mathbb{R}^{\geq 0} \times X_0)}$ is inherited by $\vartheta$, $k \leq r$.

If instead, $X_0$ is finite dimensional and $\varphi$ is continuous in time over $X_0$, then, by \eqref{eq:theta_explicit}, $\vartheta$ is separately continuous in time and $C^r$ in space.
It follows, by Theorem~(8A.3) and Remark~(8A.5;4) that $\vartheta$ is jointly continuous.
Joint $C^r$-smoothness of $\vartheta$ then follows from its spatial smoothness by Theorems~(8A.6-7) of \cite{marsden1976hopf} (see in particular Remark~(8A.8;2) thereafter).

The final assertion of part~\ref{thm:sf_1}
follows from the final statements of parts \ref{thm:maps_1} and \ref{thm:maps_2} of Theorem~\ref{thm:maps} (concerning the linear case). 

We prove part~\ref{thm:sf_2}.
Let $(\vartheta,\pi)$ and $(\tilde{\vartheta},\tilde{\pi})$ denote the $C^k_b$  (in space) 
solution pairs as in the statement.
Observe that Remark~\ref{remark:diffeo} continues to hold for the present case, and by considering  
\begin{displaymath}
    \left((D\pi(0) \vert_{X_0})^{-1} \vartheta_t \circ D\pi(0) \vert_{X_0},( D\pi(0) \vert_{X_0})^{-1} \pi \right)
\end{displaymath}
in place of $(\vartheta_t,\pi)$,
we may assume $D\pi(0) \vert_{X_0} = \mathrm{id}_{X_0}$ (and similarly for $\tilde{\pi}$).
Hence, it is possible to construct two diagrams of the form \eqref{eq:diag_sfproof}, one for $(\vartheta_\tau,\pi)$ and one for $(\tilde{\vartheta}_\tau,\tilde{\pi})$.
By Theorem~\ref{thm:maps}\ref{thm:maps_2}, we obtain a local $C^k_b$ diffeomorphism $\theta$ satisfying \eqref{eq:uniqueness_sf} for $t = \tau$ (which is linear if both $\vartheta_\tau$ and $\tilde{\vartheta}_\tau$ are).
The rest follows from \eqref{eq:theta_explicit}, which, for full solutions, holds over all $t \geq 0$ (or $t \in \mathbb{R}$ if $\vartheta$ and $\tilde{\vartheta}$ are flows).   
\end{proof}

\appendix
\section{Proof of Corollary~\ref{cor:Koop_uniq}} 
\label{appendix:B} 
\begin{proof}[Proof of Corollary~\ref{cor:Koop_uniq}]
        Argue by contradiction, and suppose that $\lambda \notin n \sigma(G)$ for all $1 \leq n \leq \ell$.
    Then, by Lemma~\ref{lemma:pkoop_in_spect}, $\psi$ is not a principal eigenfunction, i.e., $D \psi (0) = 0$.
    
    Proceed by induction and assume, for $2 \leq i \leq \ell$, that $D^{j} \psi (0) = 0 $ for $0 \leq j \leq i-1$.
    Differentiating $i$ times the relation $\psi \circ \varphi_\tau = e^{\lambda \tau} \psi$, 
    we obtain
    \begin{displaymath}
        \left( \mathrm{id}_{\mathrm{M}_{i}(X;\mathbb{F})} - l_{e^{- \lambda \tau}} r_{D \varphi_\tau (0)} \right) D^{i} \psi (0) = 0.
    \end{displaymath}
    Using the contradiction hypothesis, we may select $\tau > 0$ in accordance with Remark~\ref{remark:spectral} so that spectral nonresonance holds, implying that $D^{i}\psi (0) = 0$.
    Hence, $\psi \in \Gamma_{r,\ell}$.

    Via spectral mapping (Theorem~\ref{thm:SM}) and up to adapting the norm on $X$ to $D\varphi_\tau(0)$ (Remark~\ref{remark:adapt}), we may assume that $\Vert D\varphi_\tau(0) \Vert_{\mathcal{L}(X)}$ is strictly less than one and is sufficiently close to its spectral radius.
    This implies, along with the assumed Lyapunov stability \ref{hypA1}, that $0$ is asymptotically stable; moreover, via \eqref{eq:Koop_uniq_ass}, that assumption \eqref{eq:ell2} of Lemma~\ref{lemma:S} holds.
    Hence, Lemma~\ref{lemma:S} applied on $\Gamma_{r,\ell}$ to the pair $(f,A_0) =( \varphi_\tau,e^{\lambda \tau})$ with the usual scaling argument (Remark~\ref{remark:scaling}) implies the existence of a $\delta > 0$ such that $\psi \vert_{B_\delta^{X}} \equiv 0$.
    
    We may assume, by asymptotic stability, that $\psi$ is defined on its basin of attraction $\mathcal{B}(0)$ (Remark~\ref{remark:koopman_ext}).  
    The defining property of Koopman eigenfunctions \eqref{eq:Koopman_def} then implies that $\psi(x) = 0$ for any $x \in \mathcal{B}(0)$, upon choosing $t$ large enough so that $\varphi_t(x) \in B_\delta^{X}$.
    We have thus obtained a contradiction, for $\psi \equiv 0$ is not a Koopman eigenfunction according to Definition~\ref{def:koopman}.
\end{proof}

\section{Joint continuity for the Navier-Stokes example} 
\label{appendix:A}

We prove here the claim stated 
during the course 
of the example in Section~\ref{sect:example} concerning the Navier-Stokes system,
namely, the joint continuity of $(t,v_0) \mapsto D^i (\varphi_t  \vert_{V_0})(v_0)$, $i \geq 0$, at $(0,0)$, for $V_0 \subset \mathrm{dom}(A_U)$.

We recall that, by  Theorem 2.1(v) of \cite{WEISSLER1979} (or Proposition~A.1(iii) of \cite{buza2023spectral}, when applied to the Navier-Stokes equations),
one has
for any $u \in V$ and $\tau \in \mathcal{D}^\varphi_{u}$ an open neighborhood $O \subset \mathcal{D}^{\varphi}_\tau$ of $u$ in $V$ such that 
\begin{equation}
    \Vert \varphi_t (u) - \varphi_t (v) \Vert_V \leq C_\tau \Vert u - v \Vert_V, \qquad   0 \leq t \leq \tau,
    \label{eq:NSE_Lip}
\end{equation}
for all $v \in O$.

Note also that the smoothing achieved through the analytic semigroup $t \mapsto e^{A_U t}$ together with the properties of the bilinear operator $B$ yield
\begin{equation}
    \Vert e^{A_U t} B(u,v) \Vert_V \leq C_\alpha t^{-\alpha} e^{\omega t} \Vert u \Vert_V \Vert v \Vert_V, \qquad t > 0, \; u,v \in V,
    \label{eq:AUB_bound}
\end{equation}
for any $\omega > \sup \{ \mathrm{Re} \, \lambda \, | \, \lambda \in \sigma(A_U) \}$ 
and $\alpha \in (3/4,1)$ 
(see (2.12), (2.15) and Lemma~3.1 of \cite{buza2023spectral}).
(In our case, $\omega$ may be taken strictly less than $0$ due to the spectral assumptions.)

\begin{lemma} \label{lemma:Dvarphi_Lip}
Let $\varphi$ denote the Navier-Stokes semiflow on $V$ (as in \eqref{eq:V}).
Fix $u \in V$. 
 Then, there exists $\tau > 0$ and an open neighborhood $O \subset \mathcal{D}^\varphi_{\tau}$ of $u$ in $V$ such that
\begin{equation}
     \Vert D^i \varphi_t (u) - D^i \varphi_t (v) \Vert_{\mathrm{M}_i(V;V)} \leq C_\tau \Vert u - v \Vert_{V}, \qquad 0 \leq t \leq \tau,
    \label{eq:Dvarphi_Lip}
\end{equation}
for all $v \in O$ and $i \geq 0$. 
\end{lemma}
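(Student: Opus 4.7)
The plan is to exploit the mild formulation of \eqref{eq:NSE_evolution},
\begin{equation*}
    \varphi_t(v) = e^{A_U t} v - \int_0^t e^{A_U(t-s)} B(\varphi_s(v), \varphi_s(v)) \, ds,
\end{equation*}
differentiate $i$ times in the initial datum, and apply a singular Gr\"onwall argument to the resulting Volterra equations. Since $v \mapsto B(v,v)$ is a bounded quadratic map, its derivatives of order $\geq 3$ vanish, and Fa\`a di Bruno's formula yields, for every $i \geq 1$,
\begin{equation*}
    D^i \varphi_t(v) = \delta_{i,1} e^{A_U t} - \int_0^t e^{A_U(t-s)} \Bigl[ \widetilde{B}\bigl(\varphi_s(v), D^i \varphi_s(v)\bigr) + R_i(s;v) \Bigr] ds,
\end{equation*}
where $\widetilde{B}(u,w) := B(u,w) + B(w,u)$ and $R_i(s;v)$ is a finite linear combination of terms of the form $\widetilde{B}\bigl(D^{j_1}\varphi_s(v)[\,\cdot\,], D^{j_2}\varphi_s(v)[\,\cdot\,]\bigr)$ with $j_1 + j_2 = i$ and $j_1, j_2 \geq 1$ (so $R_1 \equiv 0$, and $R_i$ depends only on derivatives of order strictly below $i$).

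First I would establish, by induction on $i$ and possibly after shrinking the neighborhood $O$ and the time $\tau$ afforded by \eqref{eq:NSE_Lip}, that
\begin{equation*}
    M_i := \sup_{v \in O,\, 0 \leq t \leq \tau} \Vert D^i \varphi_t(v) \Vert_{\mathrm{M}_i(V;V)} < \infty.
\end{equation*}
The base case $i=0$ is \eqref{eq:NSE_Lip}; for $i \geq 1$, the integral equation above combined with the smoothing estimate \eqref{eq:AUB_bound} and the inductive boundedness of $R_i$ produces a Volterra inequality with integrable singular kernel $(t-s)^{-\alpha}$ and bounded inhomogeneity, to which Henry's singular Gr\"onwall lemma (Lemma~7.1.1 of \cite{henry1981geometric}) applies.

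With these uniform bounds in hand, subtracting the integral equations for $u$ and $v \in O$ yields
\begin{equation*}
\begin{aligned}
    D^i \varphi_t(u) - D^i \varphi_t(v) = -\int_0^t e^{A_U(t-s)} \Bigl[ & \widetilde{B}\bigl(\varphi_s(u),\, D^i \varphi_s(u) - D^i \varphi_s(v)\bigr) \\
    &+ \widetilde{B}\bigl(\varphi_s(u) - \varphi_s(v),\, D^i \varphi_s(v)\bigr) + \Delta R_i(s) \Bigr] ds,
\end{aligned}
\end{equation*}
where $\Delta R_i(s) := R_i(s;u) - R_i(s;v)$. Bilinearity of $\widetilde{B}$ decomposes each summand of $\Delta R_i(s)$ into factors that are either differences $D^j\varphi_s(u) - D^j\varphi_s(v)$ for $j < i$ (Lipschitz in $\Vert u-v\Vert_V$ by the inductive hypothesis) or uniformly bounded terms $D^k\varphi_s(u), D^k\varphi_s(v)$ (by the previous step). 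Combined with \eqref{eq:AUB_bound} and \eqref{eq:NSE_Lip}, this produces a singular Volterra inequality
\begin{equation*}
    \Vert D^i \varphi_t(u) - D^i \varphi_t(v) \Vert_{\mathrm{M}_i(V;V)} \leq C_\tau \Vert u-v\Vert_V + C \int_0^t (t-s)^{-\alpha} \Vert D^i \varphi_s(u) - D^i \varphi_s(v) \Vert_{\mathrm{M}_i(V;V)} \, ds,
\end{equation*}
and a second application of Henry's singular Gr\"onwall closes the induction, delivering \eqref{eq:Dvarphi_Lip}.

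The main obstacle is the careful bookkeeping of the Fa\`a di Bruno terms $R_i$ and the threading of two coupled inductions (first the boundedness of $D^i \varphi_t$, then its Lipschitz continuity in $v$); the singular Gr\"onwall machinery itself is standard since $\alpha \in (3/4,1)$ renders the kernel integrable, and the neighborhood $O$ need only be shrunk finitely many times for each fixed $i$.
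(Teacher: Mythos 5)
Your proposal is correct but takes a genuinely different route from the paper, and in one place it is actually more careful than what the paper does.

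For the case $i=1$ the paper avoids Gr\"onwall entirely. It chooses $\tau$ small enough that $\sup_{s \in [0,\tau]} \Vert \varphi_s(u) \Vert_V < \tfrac14 (1-\alpha)/\bigl(C_\alpha \tau^{1-\alpha}\max\{1,e^{\omega\tau}\}\bigr)$; after integrating the Duhamel formula and taking the supremum over $[0,\tau]$, the coefficient multiplying $\sup_s \Vert D\varphi_s(u)-D\varphi_s(v)\Vert$ on the right-hand side is then $<\tfrac12$ and can be absorbed directly. Your application of Henry's singular Gr\"onwall lemma with kernel $(t-s)^{-\alpha}$ achieves the same estimate with more standard but heavier machinery, and it buys you some flexibility since you do not need the smallness of $\Vert\varphi_s(u)\Vert_V$ over the whole time interval, only integrability of the kernel.

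The more interesting divergence is in the higher-order cases. You handle $i\geq 2$ by a two-stage induction on the Fa\`a di Bruno terms: first uniform bounds $M_i$ on $D^i\varphi_t$ via a singular Gr\"onwall, then Lipschitz dependence in the initial datum via a second one. The paper instead dismisses $i\geq 2$ with the claim that ``$D^i\varphi_t(u)$ becomes independent of $u$ for $i\geq 2$''. That is true for the \emph{vector field} $u\mapsto A_U u - B(u,u)$ (whose second derivative is the constant bilinear map $-2B$ and whose derivatives of order $\geq 3$ vanish), but it is not true of the \emph{flow map} $\varphi_t$: for a nonlinear semiflow, $D^2\varphi_t$ satisfies its own Duhamel equation whose right-hand side involves $\varphi_s(u)$ and $D\varphi_s(u)$, and hence does depend on $u$. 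Your inductive Fa\`a di Bruno bookkeeping supplies exactly the argument that the paper's shortcut elides, so your route is the more robust one for $i\geq 2$.
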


\begin{proof}
The case $i = 0$ is \eqref{eq:NSE_Lip}.

We show the case $i = 1$.  
By continuity in time, we may choose $\tau > 0$ such that 
\begin{equation}
    \Vert \varphi_s(u) \Vert_V < \frac14  \frac{1-\alpha}{C_\alpha \tau^{1-\alpha} \max \{ 1, e^{\omega \tau} \}} , \qquad \text{for all } s \in [0,\tau],
    \label{eq:jointcont_bound}
\end{equation} 
with $\alpha$, $C_\alpha$ and $\omega$ as in \eqref{eq:AUB_bound}.
Set $O \subset \mathcal{D}^{\varphi}_\tau$ to be the neighborhood on which \eqref{eq:NSE_Lip} holds.

The derivative of the semiflow, $D \varphi_t (u)$, may be explicitly written as (see Theorem 2.2 of \cite{WEISSLER1979}), 
\begin{displaymath}
    D \varphi_t (u) [y] = e^{A_U t} y - \int_0^t e^{A_U (t-s)} DB(\varphi_s(u))  D \varphi_s (u) [y] ds, \qquad t \in \mathcal{D}_{u}^\varphi, \; y \in V,
\end{displaymath} 
 where $A_U$ and $B$ are as in \eqref{eq:AU} and \eqref{eq:B}; and $D B (u)[v] = B(u,v) + B(v,u)$. 
We seek to estimate 
\begin{multline*}
     D \varphi_t (u) [y] - D \varphi_t (v) [y]  = - \int_0^t e^{A_U (t-s)} \Big[  DB (\varphi_s(u)) ( D \varphi_s (u) - D \varphi_s (v) ) [y]  \\
     +  DB (\varphi_s(u)-\varphi_s(v)) D \varphi_s (v) [y]  \Big] ds
\end{multline*} 
for $v \in O$ and $y \in V$.
Using \eqref{eq:AUB_bound},
\begin{multline*}
     \left\Vert D \varphi_t (u) [y] - D \varphi_t (v) [y] \right\Vert_V 
     \leq 2 \int_0^t (t-s)^{-\alpha} C_\alpha e^{\omega (t-s)} \\
     \times \Big[ \Vert \varphi_s(u) \Vert_V \Vert ( D \varphi_s (u)  - D \varphi_s (v) ) [y] \Vert_V 
     +  \Vert \varphi_s(u)-\varphi_s(v) \Vert_V \Vert D \varphi_s (v) [y] \Vert_V \Big] ds.
\end{multline*}
Consequently,
\begin{multline*}
    \left\Vert D \varphi_t (u) [y] - D \varphi_t (v) [y] \right\Vert_V \leq 2 C_\alpha \frac{t^{1-\alpha}}{1-\alpha} \max \{ 1, e^{\omega t} \} \\
    \times \sup_{s \in [0,t]}  \big[  \Vert \varphi_s(u) \Vert_V \Vert ( D \varphi_s (u) - D \varphi_s (v) ) [y] 
    +  \Vert \varphi_s(u)-\varphi_s(v) \Vert_V \Vert D \varphi_s (v) [y] \Vert_V \big].
\end{multline*}

Taking the supremum over $t \in [0,\tau]$, using \eqref{eq:jointcont_bound} and \eqref{eq:NSE_Lip}, we obtain
\begin{displaymath}
    \sup_{t \in [0,\tau]} \left\Vert D \varphi_t (u) [y] - D \varphi_t (v) [y] \right\Vert_V \leq  C_{\tau,\alpha } \Vert u - v \Vert_V \sup_{t \in [0,\tau]} \Vert D \varphi_t (v) [y] \Vert_V
    \label{eq:Dvarphit_bound}
\end{displaymath} 
It follows, by the uniform boundedness theorem, that 
\begin{displaymath}
    \sup_{t \in [0,\tau]} \left\Vert D \varphi_t (u) - D \varphi_t (v) \right\Vert_{\mathcal{L}(V)} \leq  C_{\tau,\alpha } \Vert u - v \Vert_V.
\end{displaymath}
This concludes the case $i = 1$;
cases $i \geq 2$ follow trivially, as $D^i \varphi_t(u)$ becomes independent of $u$ for $i \geq 2$. 
\end{proof}

Suppose now that $u = 0$ is a Lyapunov stable fixed point.
Then, there exists a neighborhood $O \subset V$ of $0$ on which the Lipschitz property \eqref{eq:NSE_Lip} continues to hold uniformly as $\tau \to \infty$.
Moreover, \eqref{eq:jointcont_bound} is trivially satisfied for all $\tau \geq 0$.
Via the above proof, we may hence conclude that \eqref{eq:Dvarphi_Lip} also holds for all $\tau \geq 0$ over $O$, which, combined with the boundedness of $D^i \varphi_t(0)$ for each $t \geq 0$ and $i \geq 0$, yields that $\varphi_t$ is $C^\infty_b(O;X)$ for all $t \geq 0$ fixed.

The following Lemma establishes the sought-after joint continuity property.

\begin{lemma}
    Let $\varphi$ denote the Navier-Stokes semiflow on $V$ (as in \eqref{eq:V}), and let $V_0$ be any closed linear subspace such that $V_0 \subset \mathrm{dom}(A_U)$ (with $A_U$ as in \eqref{eq:AU}). 
    Then, the map
    \begin{gather*}
        \mathbb{R}^{\geq} \times V_0 \to \mathrm{M}_i(V_0;V) \\
        (t,v_0) \mapsto D^i (\varphi_t \circ \imath_0) (v_0) 
    \end{gather*} 
    is jointly continuous at $(0,0)$, for all $i \geq 0$ (here $\imath_0: V_0 \xhookrightarrow{} V$ is the inclusion).
\end{lemma}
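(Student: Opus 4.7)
The plan is to split the problem into a "spatial" part handled by \eqref{eq:Dvarphi_Lip} and a "temporal" part at $v_0=0$ handled separately. More precisely, for $\tau>0$ small enough and $v_0$ in a suitable $V$-neighborhood of $0$, bound
\begin{displaymath}
   \left\Vert D^i \varphi_t(v_0)\vert_{V_0} - D^i \varphi_0(0)\vert_{V_0} \right\Vert_{\mathrm{M}_i(V_0;V)} \leq   \left\Vert D^i \varphi_t(v_0) - D^i \varphi_t(0) \right\Vert_{\mathrm{M}_i(V;V)} +  \left\Vert D^i \varphi_t(0)\vert_{V_0} - D^i \varphi_0(0)\vert_{V_0} \right\Vert_{\mathrm{M}_i(V_0;V)}.
\end{displaymath}
By Lemma~\ref{lemma:Dvarphi_Lip}, together with the remark (just before the lemma) that Lyapunov stability of $0$ allows the Lipschitz constant $C_\tau$ in \eqref{eq:NSE_Lip} and \eqref{eq:Dvarphi_Lip} to be taken uniform in $\tau$ on some fixed $V$-neighborhood of $0$, the first term is $\leq C\|v_0\|_V$ uniformly in $t \in [0,\tau]$. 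Hence joint continuity at $(0,0)$ reduces to $D^i\varphi_t(0)\vert_{V_0} \to D^i\varphi_0(0)\vert_{V_0}$ in $\mathrm{M}_i(V_0;V)$ as $t \searrow 0$.

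For $i=0$, this is trivial since $\varphi_t(0) = 0$ for every $t \geq 0$. For $i=1$, $D\varphi_t(0) = e^{A_Ut}$ and $D\varphi_0(0)\vert_{V_0} = \imath_0$, so one needs $\|e^{A_Ut}-\mathrm{id}_V\|_{\mathcal{L}(V_0;V)} \to 0$. Here the hypothesis $V_0 \subset \mathrm{dom}(A_U)$ is essential: since $V_0$ is closed in $V$ and $A_U$ is closed, the closed graph theorem gives $A_U\vert_{V_0} \in \mathcal{L}(V_0;V)$, whence
\begin{displaymath}
    \Vert e^{A_U t} v_0 - v_0 \Vert_V = \left\Vert \int_0^t e^{A_U s} A_U v_0 \, ds \right\Vert_V \leq t\,M\, \Vert A_U\vert_{V_0} \Vert_{\mathcal{L}(V_0;V)} \Vert v_0 \Vert_V,
\end{displaymath}
with $M = \sup_{s \in [0,\tau]} \Vert e^{A_Us} \Vert_{\mathcal{L}(V)}$.

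For $i \geq 2$ one has $D^i \varphi_0(0) = 0$ and the task is to show $\Vert D^i \varphi_t(0) \Vert_{\mathrm{M}_i(V;V)} \to 0$ as $t \searrow 0$. I would proceed inductively on $i$ using the mild formulation. Writing $N(u) = -B(u,u)$, so that $DN(0)=0$, $D^2N(0)$ is constant, and $D^jN \equiv 0$ for $j \geq 3$, and differentiating the Duhamel identity $\varphi_t = e^{A_U t}(\cdot) + \int_0^t e^{A_U(t-s)} N\circ\varphi_s\, ds$ via Fa\`a di Bruno's formula, all terms at $u=0$ involving $DN(0)$ or $D^jN(0)$ with $j\geq 3$ vanish, leaving
\begin{displaymath}
    D^i\varphi_t(0) = \int_0^t e^{A_U(t-s)} \sum_{\substack{k_1+k_2=i \\ k_1,k_2 \geq 1}} c_{k_1,k_2}\, D^2N(0)\bigl[D^{k_1}\varphi_s(0)\,\cdot\,,\; D^{k_2}\varphi_s(0)\,\cdot\,\bigr]\, ds.
\end{displaymath}
Combining \eqref{eq:AUB_bound} with the inductive hypothesis $\Vert D^k\varphi_s(0)\Vert_{\mathrm{M}_k(V;V)} \leq C s^{(k-1)(1-\alpha)}$ for $1 \leq k < i$ (the base case $k=1$ being the uniform boundedness of $e^{A_Us}$ on $[0,\tau]$) yields, via Beta-function estimates on $\int_0^t (t-s)^{-\alpha} s^{(k_1-1)(1-\alpha)+(k_2-1)(1-\alpha)}\, ds$, the bound $\Vert D^i\varphi_t(0) \Vert_{\mathrm{M}_i(V;V)} \leq C t^{(i-1)(1-\alpha)}$, closing the induction.

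The main obstacle is the $i \geq 2$ step: keeping track of the Fa\`a di Bruno combinatorics while exploiting the vanishing $DN(0)=D^jN(0)=0$ ($j\geq 3$), and verifying that the $(t-s)^{-\alpha}$ singular kernel is reconciled against the nascent temporal decay of lower-order derivatives. Since $\alpha \in (3/4,1)$ the Beta-function integrals are finite, and each induction step contributes a strictly positive power of $t$, so the scheme closes; the $i=1$ step, though conceptually simpler, is the only one genuinely using the domain hypothesis $V_0 \subset \mathrm{dom}(A_U)$.
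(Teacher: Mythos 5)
You use the same high-level decomposition as the paper: a spatial increment controlled by Lemma~\ref{lemma:Dvarphi_Lip} plus a temporal one concentrating at $v_0 = 0$. For $i=1$ your argument differs only cosmetically: you get $\|e^{A_U t}\imath_0 - \imath_0\|_{\mathcal{L}(V_0;V)} = O(t)$ from the closed graph theorem and the formula $e^{A_U t}v_0 - v_0 = \int_0^t e^{A_U s}A_U v_0\,ds$, whereas the paper derives the same bound from pointwise convergence of the difference quotients together with the uniform boundedness principle; both routes correctly isolate $V_0 \subset \mathrm{dom}(A_U)$ as the load-bearing hypothesis. For $i \geq 2$ you take a genuinely different route, and yours is the more robust one. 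The paper handles $i=2$ by exhibiting the pointwise limit $\lim_{t\searrow 0}t^{-1}D^2(\varphi_t\circ\imath_0)(0)[y_0,\tilde{y}_0]=2B(y_0,\tilde{y}_0)$ and invoking uniform boundedness, and then disposes of $i \geq 3$ by asserting $D^i\varphi_t(0)=0$. That assertion is not correct in general: by Fa\`a di Bruno, $D^3(N\circ\varphi_s)(0)$ retains the term $3\,D^2N(0)[D^2\varphi_s(0)\,\cdot\,,D\varphi_s(0)\,\cdot\,]$ with $D^2\varphi_s(0)\neq 0$, so $D^i\varphi_t(0)\neq 0$ for $i\geq 3$ in general (the scalar quadratic ODE $\dot{u}=-u-u^2$ already has $D^3\varphi_t(0)=6e^{-t}(1-e^{-t})^2\neq 0$). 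Your inductive bound $\|D^i\varphi_t(0)\|_{\mathrm{M}_i(V;V)}\leq Ct^{(i-1)(1-\alpha)}$ via the mild formulation, \eqref{eq:AUB_bound}, and Beta-function estimates is the right mechanism: it gives the needed vanishing at $(0,0)$ for every $i\geq 2$ at once, and it makes transparent that the domain hypothesis is only truly used at $i=1$. One caveat: the same issue underlies the paper's treatment of $i\geq 2$ in Lemma~\ref{lemma:Dvarphi_Lip} (the claim that $D^i\varphi_t(u)$ is independent of $u$ for $i\geq 2$), so a Lipschitz estimate at those orders really requires a Duhamel/bootstrap argument in the flavour you sketch rather than the dismissal given there; since your proof quotes that lemma for all $i$, it inherits this gap from the paper, but your own inductive machinery is exactly what is needed to close it.
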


\begin{proof}
    We first show the case $i = 1$.
    Recall that $A_U$ is a sectorial operator.
    For all $v_0 \in V_0 \subset \mathrm{dom}(A_U)$, we have that
    \begin{displaymath}
        \frac{e^{A_U t}  v_0 - v_0}{t}
    \end{displaymath}
    converges to $A_U v_0$ as $t \to 0$.
    Hence, the uniform boundedness principle implies that, for any $\tau > 0$,
    \begin{displaymath}
        \Vert e^{A_U t} \imath_0 - \imath_0 \Vert_{\mathcal{L}(V_0;V)} \leq C t, \qquad 0 \leq t \leq \tau. 
    \end{displaymath} 

    Supposed we are given $\varepsilon > 0$.
    For $v_0 \in V_0$, we estimate
    \begin{align} 
        \Vert D ( \varphi_t \circ \imath_0) (v_0)  - \imath_0 \Vert_{\mathcal{L}(V_0;V)} &\leq  \Vert D ( \varphi_t \circ \imath_0) (v_0)  - D (\varphi_t \circ \imath_0) (0) \Vert_{\mathcal{L}(V_0;V)} \nonumber \\
       &  \qquad + \Vert D ( \varphi_t \circ \imath_0) (0)  - \imath_0 \Vert_{\mathcal{L}(V_0;V)} \nonumber \\
       &\leq  \Vert D  \varphi_t (v_0) \imath_0  - e^{A_U t} \imath_0 \Vert_{\mathcal{L}(V_0;V)} + C t. \label{eq:applemma2}
    \end{align} 
    Choose $\tau < \varepsilon/(2C)$ and $O \subset \mathcal{D}^\varphi_\tau$ small enough so that \eqref{eq:Dvarphi_Lip} applies. 
    Now taking $ \delta < \varepsilon/(2 C_\tau)$ (with $C_\tau$ as in \eqref{eq:Dvarphi_Lip}, and  such that $B_\delta^{V_0} \subset O$) yields the claim, as 
    $D (\varphi \circ \tilde{\imath}_0) \left( [0,\tau) \times B_\delta^{V_0} \right) \subset B_\varepsilon^{\mathcal{L}(V_0;V)}(\imath_0)$.  

    The case $i = 2$ is treated similarly. 
    By Lemma~\ref{lemma:Dvarphi_Lip} and an argument analogous to \eqref{eq:applemma2}, it suffices to show $t \mapsto D^2 (\varphi_t \circ \imath_0 ) (0)$ is norm-continuous at $t = 0$.
    Observe that 
    \begin{displaymath}
        (t,s) \mapsto e^{A_U (t-s)} B(e^{A_U s}  y_0, e^{A_U s}  \tilde{y}_0)
    \end{displaymath}
    is continuous into $V$ for each fixed pair $y_0,\tilde{y}_0 \in V_0$ and $t \geq s \geq 0$. 
    Hence,
    \begin{multline*}
                \lim_{t \searrow 0} t^{-1} D^2 (\varphi_t \circ \imath_0 )  (0) [y_0,\tilde{y}_0] = \\ -\lim_{t \searrow 0} t^{-1} \int_0^t 2 e^{A_U (t-s)} B(e^{A_U s}  y_0, e^{A_U s} \tilde{y}_0) ds = - 2B(y_0,\tilde{y}_0),
    \end{multline*}
    for all $y_0 ,\tilde{y}_0 \in V_0$, and, by uniform boundedness once more, $t \mapsto D^2 (\varphi_t \circ \imath_0 ) (0)$ is norm-continuous (at $0$).

    The remainder ($i \neq 1,2$) follows trivially from Lemma~\ref{lemma:Dvarphi_Lip}, since $D^i \varphi_t (0) = 0$ for all $t \geq 0$ and $i \neq 1,2$.
\end{proof}

\section*{Acknowledgments}
The author gratefully acknowledges the support of the Harding Foundation through a PhD scholarship (https://www.hardingscholars.fund.cam.ac.uk).

\bibliographystyle{alpha}
\bibliography{koopmans}

\end{document}